\newtheorem{proposition}{Proposition}[section]
\newtheorem{lemma}{Lemma}[section]
\newtheorem{theorem}{Theorem}[section]
\theoremstyle{definition}
\newtheorem*{solution*}{Solution}
 \newcommand{\bbar}[1]{\setbox0=\hbox{$#1$}\dimen0=.2\ht0 \kern\dimen0 \overline{\kern-\dimen0 #1}}
 \DeclareMathOperator{\End}{\ensuremath{\mathcal{E}\kern-.125em\mathpzc{nd}}}
 \DeclareMathOperator{\Hom}{\mathcal{H}\kern-.125em\mathpzc{om}}
 \DeclareMathOperator{\id}{id}
 \DeclareMathOperator{\im}{im}
 \renewcommand{\setminus}{\smallsetminus}
 \newcommand{\udot}{\ensuremath{{\lower .183333em \hbox{\LARGE \kern -.05em$\cdot$}}}}
 \DeclareMathOperator{\Hess}	{Hess}
  \DeclareMathOperator{\sgn}	{sgn}
  \DeclareMathOperator{\ind}	{ind}
  \DeclareMathOperator{\Ind}	{Ind}
 \newcommand{\cL}{\mathcal{L}}
 \newcommand{\cM}{\mathcal{M}}
 \newcommand{\cU}{\mathcal{U}}
 \newcommand{\R}{\mathbb{R}}
 \newcommand{\fL}{\mathfrak{L}}
 \newcommand{\p}{\partial}
\title{Symplectic Geometry of Constrained Optimization}
\author{A. Agrachev and I. Beschastnyi}
\begin{document}
\maketitle

There are the notes of rather informal lectures given by the first co-author in UPMC, Paris, in January 2017.
Practical goal is to explain how to compute or estimate the Morse index of the second variation. Symplectic geometry allows to effectively do it even for very degenerate problems with complicated constraints. Main geometric and analytic tool is the appropriately rearranged Maslov index.

In these lectures, we try to emphasize geometric structure and omit analytic routine. Proofs are often substituted by informal explanations but a well-trained mathematician will easily re-write them in a conventional way.

\section{Lecture 1}

\subsection{First variations in finite dimensions}
\label{sec:first_var}
Our goal in these notes will be to develop a general machinery for optimization problems using the language and results from symplectic geometry.

We start by discussing the Lagrange multiplier rule in the finite dimensional setting. Let $U$ be a finite dimensional manifold, $\varphi: U \to \R$ a smooth function and $\Phi: U \to M$ a smooth submersion onto a finite dimensional manifold $M$. We would like to find critical points of $\varphi$ restricted to the level sets of $\Phi$. It is well known that this can be done via the Lagrange multiplier rule.

\begin{theorem}[Lagrange multiplier rule]
A point $u \in U$ is a critical point of $\varphi|_{\Phi^{-1}(q)}$, $q\in M$ if and only if there exists a covector $\lambda \in T^*_q M$, s.t.
\begin{equation}
\label{eq:Lag}
\begin{array}{c}
d_u \varphi = \lambda D_u \Phi, \\
\Phi(u) = q.
\end{array}
\end{equation}
\end{theorem}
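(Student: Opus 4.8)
The plan is to reduce the statement to an elementary linear-algebra factorization after using the submersion hypothesis to endow $\Phi^{-1}(q)$ with the structure of a smooth submanifold. Since $\Phi$ is a submersion, the differential $D_u\Phi : T_uU \to T_{\Phi(u)}M$ is surjective for every $u$, so by the regular value theorem $N_q := \Phi^{-1}(q)$ is a smooth embedded submanifold of $U$ whenever it is nonempty, and its tangent space at a point $u$ is exactly $T_uN_q = \ker D_u\Phi$.

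First I would translate the geometric notion of a critical point into a pointwise condition on covectors. By definition $u$ is a critical point of $\varphi|_{N_q}$ precisely when $d_u(\varphi|_{N_q}) = 0$, i.e. when $d_u\varphi$ annihilates every vector tangent to $N_q$. Using the identification $T_uN_q = \ker D_u\Phi$, this says exactly that $d_u\varphi$ vanishes on $\ker D_u\Phi$. At this point I would make sure that every tangent vector to $N_q$ is realized by an honest curve lying in $N_q$, which is guaranteed by the submanifold structure obtained above, so that the intrinsic differential of the restriction coincides with the restriction of $d_u\varphi$ to $\ker D_u\Phi$.

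The key step is then the factorization lemma from linear algebra: if $A : V \to W$ is a surjective linear map and $\ell \in V^*$ vanishes on $\ker A$, then $\ell$ factors as $\ell = \mu \circ A$ for a unique $\mu \in W^*$. Applying this with $V = T_uU$, $W = T_qM$, $A = D_u\Phi$ (surjective by the submersion hypothesis) and $\ell = d_u\varphi$, the vanishing of $d_u\varphi$ on $\ker D_u\Phi$ produces the desired covector $\lambda \in T^*_qM$ with $d_u\varphi = \lambda D_u\Phi$. Combined with the defining constraint $\Phi(u) = q$, this establishes the forward direction.

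For the converse I would simply observe that the equation $d_u\varphi = \lambda D_u\Phi$ forces $d_u\varphi$ to vanish on $\ker D_u\Phi = T_uN_q$, so $u$ is automatically a critical point of the restriction, while $\Phi(u) = q$ ensures $u$ lies on the correct level set. I expect the only real subtlety to be the first step, namely cleanly justifying the identification $T_uN_q = \ker D_u\Phi$ and the agreement of the two differentials, since the factorization lemma itself is entirely routine.
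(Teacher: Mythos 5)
Your proof is correct and complete. The paper itself offers no proof of this theorem --- it is stated as a well-known fact --- and your argument (regular value theorem to identify $T_u\Phi^{-1}(q) = \ker D_u\Phi$, reduction of criticality to vanishing of $d_u\varphi$ on that kernel, and the factorization lemma for a functional vanishing on the kernel of a surjection) is exactly the standard argument the authors are implicitly invoking; note that since $U$ is assumed finite dimensional in this section, no closed-range or Banach-space subtleties arise and your reasoning is fully rigorous as written.
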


This fact has an important geometric implication. Suppose that we have locally a smooth function $q \to u(q)$, s.t. each $u$ satisfies the Lagrange multiplier rule. Then the covectors $\lambda$ corresponding to $u(q)$ are just the values of the differential of the cost function $c(q) = \varphi(u(q))$. Indeed, we can differentiate the constraint equation $\Phi(u(q)) = q$ to get
$$
\id = \left( D_u \Phi \right) \frac{\p u}{\p q} \qquad \Rightarrow \qquad  \lambda = \left( \lambda D_u \Phi \right) \frac{\p u}{\p q} = \left( d_u \varphi \right) \frac{\p u}{\p q} = d_q c.
$$

So if we choose a branch of $u(q)$, the set of correspondent Lagrange multipliers is a graph of a differential of a smooth function that is a Lagrangian submanifold of the symplectic manifold $T^*M$. Let us briefly recall the symplectic terminology.

\subsection{Basic symplectic geometry}
\label{sec:bas_symp}

In this subsection we give a some basic definitions from symplectic geometry. For further results and proofs see~\cite{gosson,sal_mcd}.

A \emph{symplectic space} is a pair $(W,\sigma)$ of an even-dimensional vector space $W$ and a skew-symmetric non-degenerate bilinear form $\sigma$. One can always choose a basis in $W$, s.t. $\sigma$ is of the form
$$
\sigma(\lambda_1,\lambda_2) = \lambda_1^T J \lambda_2, \qquad \lambda_i \in W
$$
where
$$
J = \begin{pmatrix}
0 & \id \\
-\id & 0
\end{pmatrix}.
$$
Such a basis is called a \emph{Darboux basis}.

A \emph{symplectic map} is a linear map $F: W \to W$ that preserves the symplectic structure, i.e.
$$
\sigma(F\lambda_1,F\lambda_2) = \sigma(\lambda_1,\lambda_2).
$$
In a Darboux basis we can equivalently write
$$
F^T J F = J.
$$

We define the \emph{skew-orthogonal complement} of a subspace $\Gamma$ in a symplectic space $W$ as a subspace
$$
\Gamma^\angle = \{\lambda \in W \,: \sigma(\lambda,\mu)=0, \forall \mu \in \Gamma\,\}.
$$
One has the following special situations
\begin{itemize}
\item If $\Gamma \subset \Gamma^\angle$, then $\Gamma$ is called \emph{isotropic};
\item If $\Gamma \supset \Gamma^\angle$, then $\Gamma$ is called \emph{coisotropic};
\item If $\Gamma = \Gamma^\angle$, then $\Gamma$ is called \emph{Lagrangian}.
\end{itemize}

From the definition we can see, that $\Gamma$ is isotropic if and only if the restriction $\sigma|_\Gamma$ vanishes. Since $\sigma$ is non-degenerate, we have
$$
\dim \Gamma + \dim \Gamma^\angle = \dim W.
$$
Therefore a subspace $\Gamma $ is Lagrangian if and only if $\Gamma$ is isotropic and has dimension $(\dim W)/2$. Any one-dimensional subspace is isotropic by the skew-symmetry of $\sigma$. For the same reasons any codimension one subspace is coisotropic.

In a Darboux basis each vector $\lambda\in W$ has coordinates $(p_1,...,p_n,q^1,...,q^n) = (p,q)$, where $n = (\dim W)/2$. Then the subspaces defined by equations $p=0$ or $q=0$ are Lagrangian. To construct more examples we can consider a graph $(p,Sp)$ of a linear map $S$ between those subspaces. Then it is easy to check that $(p,Sp)$ gives a Lagrangian subspace if and only if $S$ is symmetric.

There exists a close relation between symplectic maps and Lagrangian subspaces. Given $(W,\sigma)$ we can construct a new symplectic space $(W\times W, (-\sigma) \oplus \sigma)$ of double dimension. It can be used to give an alternative definition of a symplectic map.
\begin{proposition}
\label{prop:symp_lagr}
Let $F: W\to W$ be a linear map. $F$ is symplectic if and only if the graph of $F$ in $(W\times W, (-\sigma)\oplus \sigma)$ is Lagrangian.
\end{proposition}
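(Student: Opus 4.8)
The plan is to invoke the dimension criterion for Lagrangian subspaces recalled just above: a subspace is Lagrangian precisely when it is isotropic and its dimension is half that of the ambient symplectic space. The graph $\Gamma_F = \{(\lambda, F\lambda) : \lambda \in W\}$ is the image of the injective linear map $\lambda \mapsto (\lambda, F\lambda)$, so it has dimension $\dim W$, which is exactly half of $\dim(W\times W) = 2\dim W$. Consequently the entire question collapses to a single point: when is $\Gamma_F$ isotropic for the form $\Omega := (-\sigma)\oplus\sigma$?

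First I would write the ambient form out explicitly. For $(\lambda_1,\mu_1),(\lambda_2,\mu_2)\in W\times W$ we have
$$
\Omega((\lambda_1,\mu_1),(\lambda_2,\mu_2)) = \sigma(\mu_1,\mu_2) - \sigma(\lambda_1,\lambda_2),
$$
and in passing I would note that $\Omega$ is skew-symmetric and non-degenerate, so that it is indeed a symplectic form on $W\times W$ and the half-dimension criterion genuinely applies.

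Next I would restrict $\Omega$ to $\Gamma_F$ by evaluating it on two arbitrary graph vectors $(\lambda_1, F\lambda_1)$ and $(\lambda_2, F\lambda_2)$, which gives
$$
\Omega((\lambda_1,F\lambda_1),(\lambda_2,F\lambda_2)) = \sigma(F\lambda_1, F\lambda_2) - \sigma(\lambda_1,\lambda_2).
$$
The graph is isotropic exactly when this vanishes for all $\lambda_1,\lambda_2\in W$, that is, when $\sigma(F\lambda_1,F\lambda_2) = \sigma(\lambda_1,\lambda_2)$ for all such vectors — which is verbatim the defining condition for $F$ to be symplectic. Reading this equivalence in both directions yields the two implications of the proposition simultaneously.

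There is no real obstacle here; the computation is a one-line substitution. The only points deserving a moment's care are the bookkeeping of the dimension count, needed to license the half-dimension criterion, and the verification that $\Omega$ is itself a symplectic form on $W\times W$, since that criterion is stated only for subspaces of a genuine symplectic space. Both are immediate from the definitions, so I expect the argument to be short and entirely formal.
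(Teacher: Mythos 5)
Your proof is correct and complete: the paper itself states this proposition without proof (deferring to its references), and your argument — the graph automatically has half the dimension of $W\times W$, so everything reduces to isotropy, which after writing out $\bigl((-\sigma)\oplus\sigma\bigr)\bigl((\lambda_1,F\lambda_1),(\lambda_2,F\lambda_2)\bigr)=\sigma(F\lambda_1,F\lambda_2)-\sigma(\lambda_1,\lambda_2)$ is verbatim the condition that $F$ be symplectic — is exactly the intended one-line verification. Your care in checking that $(-\sigma)\oplus\sigma$ is non-degenerate, so that the half-dimension criterion quoted earlier in the paper genuinely applies, is the only point where a reader could ask for justification, and you handle it.
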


We can extended all these definitions to the non-linear setting. A \emph{symplectic manifold} is a pair $(W,\sigma)$, where $W$ is a smooth manifold and $\sigma$ is a closed non-degenerate differential two-form. Similar to the linear case, one can show that locally all symplectic manifolds have the same structure.
\begin{theorem}[Darboux]
For any point $x$ of a symplectic manifold $(W,\sigma)$ one can find a neighbourhood $U$ and a local diffeomorphism $\psi : U \to \R^{2n}$, s.t.
$$
\sigma =\psi^*(dp_i\wedge dq^i),
$$
where $(p,q)$ are coordinates in $\R^{2n}$.
\end{theorem}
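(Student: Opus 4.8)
The plan is to prove this by the Moser deformation argument, reducing the statement first to a pointwise normalization and then to solving a purely algebraic equation for a time-dependent vector field. Fix the point $x$. The two-form $\sigma_x$ on $T_x W$ is skew-symmetric and non-degenerate, so by the linear Darboux lemma recalled above (the existence of a Darboux basis for a symplectic vector space) there is a linear isomorphism putting $\sigma_x$ into standard form. Composing this with any chart centered at $x$, I may assume that $W = \R^{2n}$ locally with $x = 0$, and that the standard form $\sigma_0 := dp_i\wedge dq^i$ agrees with $\sigma$ at the origin, i.e. $\sigma_0|_0 = \sigma|_0$.

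Next I interpolate. Set $\sigma_t = \sigma_0 + t(\sigma - \sigma_0)$ for $t\in[0,1]$. Each $\sigma_t$ is closed, being an affine combination of closed forms. Since $\sigma_t|_0 = \sigma_0|_0$ is non-degenerate and non-degeneracy is an open condition, compactness of $[0,1]$ provides a single neighbourhood of $0$ on which $\sigma_t$ is non-degenerate for every $t\in[0,1]$ simultaneously. The difference $\sigma - \sigma_0$ is closed and vanishes at $0$, so by the Poincaré lemma on a star-shaped neighbourhood I may write $\sigma - \sigma_0 = d\alpha$ with a one-form $\alpha$ that can moreover be chosen to vanish at the origin.

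Now I seek a flow $\psi_t$, with $\psi_0 = \id$, generated by a time-dependent vector field $X_t$ and satisfying $\psi_t^*\sigma_t = \sigma_0$. Differentiating in $t$ and invoking Cartan's formula $\cL_{X_t}\sigma_t = d(\iota_{X_t}\sigma_t) + \iota_{X_t}d\sigma_t = d(\iota_{X_t}\sigma_t)$ (as $\sigma_t$ is closed), the condition becomes
\begin{equation*}
\psi_t^*\bigl( d(\iota_{X_t}\sigma_t) + (\sigma-\sigma_0) \bigr) = 0,
\end{equation*}
which is satisfied as soon as $\iota_{X_t}\sigma_t = -\alpha$. Because $\sigma_t$ is non-degenerate on our neighbourhood, this algebraic equation determines $X_t$ uniquely and smoothly in $t$; since $\alpha$ vanishes at $0$, so does $X_t$, so that $0$ is a fixed point of the flow and $\psi_t$ is defined for all $t\in[0,1]$ on a possibly smaller neighbourhood $U$ of $0$. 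Setting $\psi_1 := \psi_t|_{t=1}$ gives $\psi_1^*\sigma = \psi_1^*\sigma_1 = \sigma_0$, so the local diffeomorphism $\psi := \psi_1^{-1}$ (pre-composed with the initial normalizing chart) satisfies $\sigma = \psi^*(dp_i\wedge dq^i)$, as required.

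The main obstacle is not conceptual but technical: guaranteeing that the generating vector field $X_t$ is simultaneously well-defined and integrable up to time one on one fixed neighbourhood, uniformly in $t\in[0,1]$. This is precisely why the pointwise normalization of the first step is essential — it forces $\sigma_t|_0$ to be non-degenerate independently of $t$, yielding uniform invertibility of $\sigma_t$ near $x$, and it forces $\alpha_0 = 0$ and hence $X_t(0)=0$, which makes the flow complete up to $t=1$ on a neighbourhood of $x$. Everything else (the Poincaré lemma, Cartan's formula, and solving $\iota_{X_t}\sigma_t = -\alpha$) is routine.
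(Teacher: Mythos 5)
Your argument is correct and complete: it is the standard Moser deformation proof, and you handle the two points where such arguments usually leak, namely uniform non-degeneracy of $\sigma_t$ on a single neighbourhood (via the pointwise linear normalization at $x$ plus compactness of $[0,1]$) and existence of the flow up to time $1$ (via choosing the primitive $\alpha$ with $\alpha_0=0$, hence $X_t(0)=0$). There is, however, nothing in the paper to compare it with: these lecture notes state the Darboux theorem as recalled background and defer all proofs to the references \cite{gosson,sal_mcd}, so your proof fills a gap the authors deliberately left open. The Moser trick you use is exactly the canonical modern proof found in those texts; the main alternative route, for contrast, is the classical induction on dimension, in which one picks a function $p_1$ near $x$, straightens its Hamiltonian vector field to produce a conjugate coordinate $q^1$, passes to the $2n-2$-dimensional symplectic reduction, and repeats. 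That older argument avoids the time-dependent cohomological equation $\iota_{X_t}\sigma_t=-\alpha$ and the Poincar\'e lemma, but involves more bookkeeping; the Moser argument is shorter and has the advantage that it generalizes immediately to relative normal-form statements (such as the Darboux--Weinstein theorem for neighbourhoods of Lagrangian submanifolds), which is why it has become standard.
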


Note that a tangent space $T_x W$ has naturally a structure of a symplectic space. Therefore we can say that a submanifold $N\subset M$ is \emph{isotropic/coisotropic/Lagrangian} if the same property is true for each subspace $T_x N \subset T_x W$ for all $x\in N$. Similarly to the linear case a submanifold $N$ is isotropic if and only if $\sigma|_N = 0$ and Lagrangian if additionally $\dim N = (\dim W)/2$.

A \emph{symplectomorphism} of $(W,\sigma)$ is a smooth map $f: W\to W$, that preservers the symplectic structure, i.e.
$$
f^*\sigma = \sigma.
$$

Given a smooth function $h:W\to\mathbb R$, a {\it Hamiltonian vector field} $\vec h$ is defined by the identity
$dh=\sigma(\cdot,\vec h)$. The flow generated by the {\it Hamiltonian system} $\dot x=\vec h(x)$ preserves the symplectic structure. In Darboux coordinates, Hamiltonian system has the form:
$$
\dot p=-\frac{\p h}{\p q},\quad \dot q=\frac{\p h}{\p p}.
$$

The non-linear analogue of Proposition~\ref{prop:symp_lagr} holds as well
\begin{proposition}
A diffeomorphism $f: W\to W$ of a symplectic manifold $(W,\sigma)$ is a symplectomorphism if and only if the graph of $f$ in $(W\times W, (-\sigma) \oplus \sigma)$ is a Lagrangian submanifold.
\end{proposition}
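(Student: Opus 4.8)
The plan is to reduce the Lagrangian condition on the graph to the symplectomorphism identity $f^*\sigma = \sigma$ by a single pullback computation. I would write $\Omega = \pi_2^*\sigma - \pi_1^*\sigma$ for the form $(-\sigma)\oplus\sigma$ on $W\times W$, where $\pi_1,\pi_2\colon W\times W\to W$ are the two projections, and I would realize the graph of $f$ as the image of the map $j\colon W\to W\times W$, $j(x)=(x,f(x))$. Since $f$ is a diffeomorphism, $j$ is an embedding, so its image $\Gamma_f$ is a submanifold diffeomorphic to $W$, hence of dimension $\dim W = \tfrac12\dim(W\times W)$.

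Next I would invoke the criterion recalled above: a submanifold is Lagrangian exactly when it is isotropic (the ambient symplectic form pulls back to zero on it) and has half the ambient dimension. The dimension count is already settled for $\Gamma_f$, so the entire question collapses to deciding when $\Gamma_f$ is isotropic, that is, when $j^*\Omega = 0$.

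The heart of the argument is then the computation of $j^*\Omega$. Using $\pi_1\circ j = \id_W$ and $\pi_2\circ j = f$ together with functoriality of pullback, I obtain
$$
j^*\Omega = (\pi_2\circ j)^*\sigma - (\pi_1\circ j)^*\sigma = f^*\sigma - \sigma.
$$
Thus $j^*\Omega \equiv 0$ if and only if $f^*\sigma = \sigma$, and combining this with the dimension count yields the desired equivalence in both directions.

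I do not expect a genuine analytic obstacle here; the only points demanding care are formal. I would need to check that $j$ really is an embedding, so that $\Gamma_f$ is an honest half-dimensional submanifold and not merely an immersed one, and to be precise in applying the isotropic-plus-half-dimension characterization of Lagrangian submanifolds stated earlier. Once those are in place, the equivalence follows immediately from the one-line functoriality calculation above.
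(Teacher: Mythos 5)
Your proof is correct. Note that the paper states this proposition without proof (these are informal notes, and the statement is offered simply as the nonlinear analogue of the linear version), so there is nothing to compare against; your argument is exactly the standard one the authors intend, and it uses precisely the tools the paper sets up immediately beforehand, namely the characterization of a Lagrangian submanifold as one that is isotropic and of half the ambient dimension. The computation $j^*\Omega=(\pi_2\circ j)^*\sigma-(\pi_1\circ j)^*\sigma=f^*\sigma-\sigma$ settles both directions at once, and the one formal point you flag is easily discharged: $\pi_1\circ j=\id_W$ gives a smooth left inverse of $j$, so $j$ is an embedding and $\Gamma_f$ is an honest embedded submanifold of dimension $\tfrac12\dim(W\times W)$.
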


The most basic and important examples of symplectic manifolds are the cotangent bundles $T^*M$. To define invariantly the symplectic form we use the projection map $\pi: T^*M \to M$. It's differential is a well defined map $\pi_*: T (T^*M) \to TM$. We can define the \emph{Liouville one-form} $s\in \Lambda^1(T^* M)$  at $\lambda \in T^*M$ as
$$
s_\lambda = \lambda \circ \pi_*.
$$
Then the canonical symplectic form on $T^*M$ is simply given by the differential $\sigma = ds$.

In local coordinates $T^*M$ is locally diffeomorphic to $\R^n \times \R^n$ with coordinates $(p,q)$, where $q$ are coordinates on the base and $p$ are coordinates on the fibre. In these coordinates the Liouville form $s$ is written as $s = p_idq^i$. Thus $(p,q)$ are actually Darboux coordinates. We can use this fact to construct many Lagrangian manifolds. Namely

\begin{proposition}
 Let $S: M \to \R$ be a smooth function. Then the graph of the differential $d_q S$ is a Lagrangian submanifold in $T^* M$.
 \end{proposition}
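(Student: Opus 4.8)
The plan is to work invariantly and to exploit the \emph{tautological} property of the Liouville one-form $s$, which immediately reduces the statement to the identity $d^2=0$. Write $\omega = dS$ for the differential of $S$, regarded now as a smooth section $\omega : M \to T^*M$ of the projection $\pi$. Its image is exactly the graph $N$ that we must analyze. Since $\omega$ is a section, it is an embedding and $N=\omega(M)$ is an embedded submanifold diffeomorphic to $M$; in particular $\dim N = \dim M = \tfrac12 \dim T^*M$. By the characterization recalled above (Lagrangian $=$ isotropic of half-dimension), it therefore suffices to show that $N$ is isotropic, i.e.\ that $\omega^*\sigma = 0$.

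The key step, which I would establish first, is the general identity $\alpha^*s = \alpha$ for an arbitrary one-form $\alpha$ on $M$ viewed as a map $\alpha : M \to T^*M$. To verify it, fix $q\in M$ and $v\in T_qM$ and unwind the definition $s_\lambda = \lambda\circ\pi_*$ at the point $\lambda=\alpha(q)$:
\[
(\alpha^*s)_q(v) = s_{\alpha(q)}(\alpha_* v) = \alpha(q)\big(\pi_*\alpha_* v\big) = \alpha(q)\big((\pi\circ\alpha)_* v\big).
\]
Because $\alpha$ is a section, $\pi\circ\alpha = \id_M$, so $(\pi\circ\alpha)_* v = v$ and the right-hand side equals $\alpha(q)(v)$. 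Hence $\alpha^*s=\alpha$, as claimed.

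I would then specialize to $\alpha=\omega=dS$ and use $\sigma = ds$ together with the commutation of pullback and exterior derivative:
\[
\omega^*\sigma = \omega^*(ds) = d(\omega^* s) = d(dS) = 0 .
\]
Thus $\sigma|_N = 0$, so $N$ is isotropic of half the dimension of $T^*M$, hence Lagrangian.

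The only point that genuinely requires care — the \emph{main obstacle}, though a mild one — is the tautological identity $\alpha^*s=\alpha$; once it is in hand the rest is purely formal. A reader preferring coordinates may bypass it: in the Darboux coordinates $(p,q)$ with $s = p_i\,dq^i$ the graph is $\{\,p_i = \partial S/\partial q^i\,\}$, on which $s$ pulls back to $\tfrac{\partial S}{\partial q^i}\,dq^i = dS$, and $\sigma = dp_i\wedge dq^i$ pulls back to $d(dS)=0$ by symmetry of the Hessian of $S$. This is simply the coordinate shadow of the invariant argument above.
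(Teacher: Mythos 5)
Your proof is correct, but it takes a genuinely different route from the paper's. The paper proves the proposition exactly by the argument you relegate to your final remark: a direct computation in the Darboux coordinates $(p,q)$, where the graph is $\{p_i = \partial S/\partial q^i\}$ and the vanishing of $\sigma$ on it comes down to the commutativity (symmetry) of the second derivative of $S$. Your main argument is instead the invariant one, built on the tautological identity $\alpha^*s = \alpha$ for any one-form $\alpha$ viewed as a section of $\pi: T^*M \to M$, after which isotropy is the one-line computation $\omega^*\sigma = d(\omega^*s) = d(dS) = 0$. Each approach has its merits: the paper's coordinate computation is elementary and fits its stated policy of working in Darboux charts, while yours isolates the structural reason the statement holds ($d^2 = 0$) and proves more along the way --- the same argument shows that the graph of \emph{any} one-form $\alpha$ is Lagrangian if and only if $d\alpha = 0$, with exactness of $\alpha$ playing no role. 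One small point of bookkeeping you handled correctly and should keep: isotropy of the submanifold $N = \omega(M)$ is equivalent to $\omega^*\sigma = 0$ precisely because $\omega$ is a diffeomorphism onto $N$, so pulling back by $\omega$ loses no information; together with $\dim N = \tfrac{1}{2}\dim T^*M$ this closes the argument.
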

The proof is a straightforward computation in the Darboux coordinates and follows from the commutativity of the second derivative of $S$.

We have seen in the previous subsection that the set of Lagrange multipliers is often a graph of the differential of a smooth function. One can reformulate this by saying that that the set of Lagrange multipliers is actually a Lagrangian submanifold. In the next sections we will see that this is a rather general fact, but the resulting ``Lagrangian set" can be quite complicated. So we will linearise our problem and extract optimality information from the behaviour of what is going to be ``tangent spaces'' to this ``Lagrangian sets''. This way we obtain a geometric theory of second variation that is applicable to a very large class of optimization problems.

\subsection{First variation for classical calculus of variations}
\label{sec:calc_var}
Let us consider a geometric formulation of the classical problem of calculus of variations, which is an infinite dimensional optimisation problem. We denote by $\cU$ the set of Lipschitzian  curves $\gamma: [\tau,t] \to M$, where $M$ is a finite dimensional manifold. Assume that this set is endowed with a nice topology of a Hilbert manifold. We consider a family of functionals
$$
J_\tau^t:\gamma\mapsto   \int_\tau^t l(\gamma(s),\dot{\gamma}(s))ds.
$$

We define the evaluation map $F_s: \cU \to M$, that takes a curve and returns a point on it at a time $s\in[\tau,t]$, i.e. $F_s(\gamma) = \gamma_s$. We look for the critical points of the restriction
$$
J_\tau^t|_{\{F_\tau^{-1}(q_\tau),F_t^{-1}(q_t)\}}.
$$
So we apply the Lagrange multiplier rule to $\varphi = J_\tau^t$ with $\Phi = (F_\tau,F_t)$ and find that there exists a pair $(-\lambda_\tau,\lambda_t) \in T_{q(\tau)}^* M \times T_{q(t)}^* M$, s.t.
\begin{equation}
\label{eq:lag_inf}
d_\gamma J_\tau^t = \lambda_t D_\gamma F_t - \lambda_\tau D_\gamma F_\tau.
\end{equation}

The evaluation map is obviously a submersion. This fact implies that ones we fix $\gamma$ and one of the covectors $(-\lambda_\tau,\lambda_t)$, the other one will be determined automatically. Indeed, suppose for example that $(-\lambda_\tau,\lambda_t)$ and $(-\lambda_\tau,\lambda'_t)$ both satisfy the Lagrange multiplier rule. Then in addition to (\ref{eq:lag_inf}) we have
$$
d_\gamma J_\tau^t = \lambda'_t D_\gamma F_t - \lambda_\tau D_\gamma F_\tau.
$$
We subtract this equation from (\ref{eq:lag_inf}) and obtain
$$
(\lambda'_t - \lambda_t) D_\gamma F_t = 0,
$$
which gives a contradiction with the fact that $F_t$ is a submersion.

As in the finite dimensional case under some regularity assumptions the Lagrange multipliers form a Lagrangian submanifold in $T^*M \times T^*M$ endowed with a symplectic form $(-\sigma) \oplus \sigma$. Since each $\lambda_\tau$ determines a unique $\lambda_t$, we get that this Lagrangian submanifold can be identified with a graph of some map $A^t_\tau: T^*M \to T^*M$. Such a graph is a Lagrangian submanifold if and only if $A_\tau^t$ is symplectic. Moreover, the identity $J_\tau^t=J_\tau^s+J_s^t$ implies that $A_\tau^t$ is actually a symplectic flow, i.e. $A^t_\tau = A^t_s \circ A^s_\tau$, where $s\in[\tau,t],\ A_\tau^\tau=\mathrm{Id}$.

Since we have a symplectic flow, it should come from a Hamiltonian system. Let us find an expression for the corresponding Hamiltonian. We introduce some local coordinates $(p,q)$ on $T^*M$. Then our extremal curve $\gamma$ is given by a map $s\to q(s)$. We denote $\dot{q} = v$ and write down the equation (\ref{eq:lag_inf})
$$
\int_\tau^t\left( \frac{\p l}{\p q} dq_s + \frac{\p l}{\p v} dv_s \right)ds = p_t dq_t - p_\tau dq_\tau.
$$
We differentiate this expression w.r.t. time $t$:
$$
\frac{\p l}{\p q} dq_t + \frac{\p l}{\p v} dv_t = \dot p_t dq_t + p_t dv_t
$$
Then we obtain
\begin{align}
\dot{q}_t &= v_t, \nonumber\\
\dot{p}_t &= \frac{\p l}{\p q},\label{eq:ham}\\
p_t &= \frac{\p l}{\p v} . \nonumber
\end{align}

The first two equations can be seen to be a Hamiltonian system
\begin{align}
\dot{q}_t &= \frac{\p H}{\partial p}, \nonumber\\
\dot{p}_t &= -\frac{\p H}{\p q},
\end{align}
 with a Hamiltonian
$$
H(v,p,q) = \langle p,v \rangle - l(q,v)
$$
and the third equation gives a condition
$$
\frac{\p H}{\p v} = 0.
$$
If the second derivative of the Hamiltonian $H$ w.r.t. $v$ is non-degenerate, then by the inverse function theorem we can locally resolve this condition to obtain a function $v = v(p,q)$. Substituting it in $H(p,q,v)$, we get an autonomous Hamiltonian system with a Hamiltonian $H(p,q,v(p,q))$.

\subsection{Second variation}
\label{sec:sec_var}

Now we are going back to the general setting with $\varphi: U \to \R$ and constraints $\Phi: U \to M$. Once we have found a critical point $u\in U$, we would like to study the index of the Hessian, which is a quadratic form
$$
\Hess_u\varphi|_{\Phi^{-1}(q)} = \ker D_u\Phi \times \ker D_u\Phi \to \R.
$$
We can write an explicit expression for the Hessian without resolving the constraints in the spirit of the Lagrange multiplier rule. Consider a curve $u(t) \in \Phi^{-1}(q)$, s.t. $v = \dot{u}(0) \in \ker D_{u(0)}\Phi$. Then using the Lagrange multiplier rule, we obtain
$$
\left.\frac{d^2}{d t^2}\right|_{t=0} \varphi = \left.\frac{d}{d t}\right|_{t=0} \frac{d \varphi}{d u} \dot{u} = \left\langle\frac{d^2 \varphi}{du^2} \dot{u},\dot{u}\right\rangle + \frac{d \varphi}{d u}\ddot{u} =\left\langle \frac{d^2 \varphi}{du^2} v,v\right\rangle + p \frac{d \Phi}{d u}\ddot{u}(0).
$$
On the other hand we can twice differentiate the constraints $\Phi(u(t)) = q$. We get similarly
$$
\left\langle\frac{d^2 \Phi}{du^2} v,v\right\rangle + \frac{d \Phi}{d u}\ddot{u}(0) = 0.
$$
If we assume that $U$ is Hilbert manifold, then the two expressions give a formula for the Hessian in local coordinates that can be written as follows:
$$
\Hess_u\varphi(v,v) = \left\langle \left( \frac{d^2\varphi}{du^2} - p\frac{d^2\Phi}{du^2} \right) v,v \right\rangle := \langle Qv,v\rangle,
$$
s.t. $v$ satisfies
$$
\frac{d \Phi}{d u}v = 0.
$$

We define
$$
L = \left\{(p,q,u): \Phi(u) = q\;,\; \frac{d\varphi}{du} - p\frac{d\Phi}{d u} = 0\right\},
$$
and
$$
\cL = \pi(L),
$$
where $\pi(u,\lambda) = \lambda$. $\cL$ is the set of all Lagrangian multipliers. We say that $(\Phi,\varphi)$ is a \emph{Morse pair} (or a \emph{Morse problem}), if the equation (\ref{eq:Lag}) is regular, i.e. zero is a regular value for the map
\begin{equation}
\label{eq:map}
(p,q,u) \mapsto \frac{d\varphi}{du} - p\frac{d\Phi}{d u}.
\end{equation}
If $\dim U<\infty$, then generically constraint optimization problems are Morse. Not all functions $\varphi|_{\Phi^{-1}(q)}$ though are Morse, as one could think the name suggests. The Morse property of a constrained optimization problem implies the following important facts
\begin{proposition}
\label{prop:properties}
Let $(\varphi,\Phi)$ be a Morse problem. Then $L$ is a smooth manifold and $\pi|_{L}$ is a Lagrangian immersion into $T^*M$.
\end{proposition}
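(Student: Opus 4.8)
The plan is to realise $L$ locally as a regular zero set, read off its dimension, and then establish the two properties separately: the immersion by a linear-algebra computation, and the isotropy by exhibiting a generating function. First I would fix local Darboux coordinates $(p,q)$ on $T^*M$ and coordinates $u$ on $U$, with $N=\dim U$, $n=\dim M$, so that $\frac{d\Phi}{du}=:D$ is an $n\times N$ matrix and $Q=\frac{d^2\varphi}{du^2}-p\frac{d^2\Phi}{du^2}$ is the symmetric $N\times N$ Hessian of Section~\ref{sec:sec_var}. In these coordinates $L$ is the common zero set inside $U\times T^*M$ of
\[
\Psi(u,p,q)=\Bigl(\Phi(u)-q,\ \tfrac{d\varphi}{du}-p\tfrac{d\Phi}{du}\Bigr).
\]
The variable $q$ enters only the first block, through $-\id$, so $\Psi$ is a submersion at a point of $L$ if and only if its second block is a submersion in the $(u,p)$ variables; this is exactly the Morse hypothesis that $0$ be a regular value of~(\ref{eq:map}). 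By the regular value theorem $L$ is then a smooth manifold, and a dimension count gives $\dim L=(N+2n)-(N+n)=n=\tfrac12\dim T^*M$, the correct dimension for a Lagrangian submanifold.

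Next I would show that $\pi|_L$ is an immersion. Differentiating the two defining relations, a tangent vector $(du,dp,dq)\in T_{(u,p,q)}L$ satisfies $D\,du=dq$ and $Q\,du=D^T dp$. Since $\pi$ forgets $u$, such a vector lies in $\ker d(\pi|_L)$ precisely when $dp=0$ and $dq=0$, and the relations then force $D\,du=0$ and $Q\,du=0$, so that $\ker d(\pi|_L)\cong\ker Q\cap\ker D$. The key observation—and what I expect to be the main point of the argument—is that the Morse condition is \emph{exactly} the statement $\ker Q\cap\ker D=\{0\}$: surjectivity of the block $[\,Q\ \ {-}D^T\,]$ is dual to the non-existence of a nonzero $w$ with $Qw=0$ and $Dw=0$, using that $Q$ is symmetric. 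Thus the same nondegeneracy that makes $L$ smooth also makes $d(\pi|_L)$ injective; this is the only place where the content of the Morse condition is used beyond the bare regular value theorem, and it is what ties smoothness and immersion to a single hypothesis.

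Finally, for the Lagrangian property it remains to verify isotropy, $(\pi|_L)^*\sigma=0$, which I would obtain by producing a generating function and thereby recovering the picture of Section~\ref{sec:first_var}. Writing $s=p\,dq$ for the Liouville form, on $L$ one may substitute $dq=D\,du$ (from $\Phi(u)=q$) and $p\,D=\frac{d\varphi}{du}$ (the Lagrange relation), so that
\[
(\pi|_L)^* s=p\,D\,du=\tfrac{d\varphi}{du}\,du=d\bigl(\varphi|_L\bigr),
\]
where $\varphi|_L$ is the restriction to $L$ of $\varphi$ pulled back from $U$. Since $(\pi|_L)^*s$ is exact, $(\pi|_L)^*\sigma=(\pi|_L)^*ds=d\,(\pi|_L)^*s=0$, so $L$ is isotropic; combined with $\dim L=n$ this shows that $\pi|_L$ is a Lagrangian immersion. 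This computation is the global counterpart of the identity $\lambda=d_q c$ from Section~\ref{sec:first_var} and makes precise the earlier remark that the set of Lagrange multipliers is the image of a differential.
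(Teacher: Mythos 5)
Your proof is correct, and its first two steps follow the paper's own route: smoothness of $L$ comes from the regular value theorem, and injectivity of $d(\pi|_L)$ comes from noting that a kernel vector would have $\delta p=\delta q=0$ and hence $\delta u\in\ker Q\cap\ker\frac{d\Phi}{du}$, which the Morse condition forbids -- the paper packages your duality observation (surjectivity of $[\,Q\ \ {-}D^T\,]$ is equivalent, via symmetry of $Q$, to $\ker Q\cap\ker\frac{d\Phi}{du}=0$) into a separate lemma proved just before the proposition. Where you genuinely diverge is the Lagrangian step. The paper works pointwise with the tangent space $L_{(u,\lambda)}(\varphi,\Phi)$ of (\ref{eq:L}): it checks isotropy by evaluating $\sigma$ on two tangent vectors, substituting $\delta q_i=\frac{d\Phi}{du}\delta u_i$ and $Q\delta u_i=\delta p_i\frac{d\Phi}{du}$, and cancelling via the symmetry of $Q$; it then gets $\dim L_{(u,\lambda)}=n$ from surjectivity of $S:(\delta u,\delta p)\mapsto Q\delta u-\delta p\frac{d\Phi}{du}$. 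You instead exhibit a primitive, $(\pi|_L)^*s=d(\varphi|_L)$, so isotropy is free from $d^2=0$, and you read off $\dim L=n$ from the codimension count. Your route is more conceptual: it makes the generating-function picture of Section~\ref{sec:first_var} (the identity $\lambda=d_qc$) literal, it needs the symmetry of $Q$ only in the immersion step, and it actually proves something slightly stronger, namely that $\pi|_L$ is an \emph{exact} Lagrangian immersion. The paper's pointwise computation buys something too: it establishes directly that the linear subspace $L_{(u,\lambda)}(\varphi,\Phi)$ is Lagrangian, and this linear-algebraic fact (rather than the global exactness of $L$) is the object reused later for degenerate problems and $L$-derivatives, where there is no smooth manifold $L$ to pull forms back from. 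Both proofs carry the same finite-dimensionality caveat in the dimension count, which the paper acknowledges by citing \cite{ABB} for the general case.
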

The main corollary of this proposition is that $\cL$ has a well defined tangent Lagrangian subspace at each point
\begin{equation}
\label{eq:L}
L_{(u,\lambda)}(\varphi,\Phi) = \left\{(\delta p,\delta q): \exists \delta u\in T_u U \; ; \; \frac{d\Phi}{du} \delta u = \delta q \;,\; Q\delta u = \delta p\frac{d\Phi}{du} \right\}
\end{equation}
and these subspaces will be the main objects of our study.

Before proving the last proposition, we prove a lemma
\begin{lemma}
The point $(p,q,u)$ is regular for the map (\ref{eq:map}) if and only if $\mathrm{im}\,Q$ is closed and
$$
\ker Q \cap \ker \frac{d\Phi}{d u} = 0.
$$
\end{lemma}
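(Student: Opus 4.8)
The plan is to pass to local coordinates, differentiate the map \eqref{eq:map}, and reduce regularity to a surjectivity statement in the Hilbert space $H := T_u U$ that can be settled by elementary functional analysis. Trivialising near $(p,q,u)$ I would write $U\cong H$ and $M\cong\R^n$, so that $p\in(\R^n)^*$, $q\in\R^n$, and the map \eqref{eq:map} becomes $G(p,q,u)=d_u\varphi - p\,\frac{d\Phi}{du}\in H^*$. In these coordinates $G$ does not depend on $q$ at all, and differentiating in the remaining directions gives
\[
dG(\delta p,\delta q,\delta u)=Q\,\delta u-\delta p\,\tfrac{d\Phi}{du}=Q\,\delta u-A^*\delta p,
\]
where $Q=\frac{d^2\varphi}{du^2}-p\frac{d^2\Phi}{du^2}\colon H\to H^*$ is the symmetric Hessian operator and $A:=\frac{d\Phi}{du}\colon H\to\R^n$ is surjective since $\Phi$ is a submersion. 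Hence $(p,q,u)$ is regular exactly when $dG$ is onto, i.e.\ when $\im Q+\im A^*=H^*$.

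Next I would exploit the Hilbert structure. Identifying $H^*\cong H$ by Riesz, $Q$ becomes self-adjoint and $A^*$ has finite-dimensional image $\im A^*=(\ker A)^\perp$ of dimension $n$. Using $(V+W)^\perp=V^\perp\cap W^\perp$, together with $(\im Q)^\perp=\ker Q^*=\ker Q$ and $(\im A^*)^\perp=\ker A$ (here $\ker A$ is closed because $A$ is bounded), I obtain
\[
\bigl(\im Q+\im A^*\bigr)^\perp=\ker Q\cap\ker A .
\]
Thus $\im Q+\im A^*$ is \emph{dense} in $H$ if and only if $\ker Q\cap\ker\frac{d\Phi}{du}=0$. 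This isolates one of the two conditions of the statement and uses only symmetry of $Q$, no closedness.

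It remains to upgrade density to surjectivity, which is precisely where closedness of $\im Q$ enters and is the heart of the proof. The easy implication is that if $\im Q$ is closed then $\im Q+\im A^*$ is closed (a closed subspace plus a finite-dimensional one is closed), and closed plus dense gives $\im Q+\im A^*=H$, i.e.\ regularity. The delicate converse — regularity forces $\im Q$ closed — I would deduce from the standard fact that a bounded operator between Banach spaces whose range has finite codimension has closed range: from $\im Q+\im A^*=H$ with $\dim\im A^*=n$ the range of $Q$ has codimension at most $n$, hence is closed. I expect this finite-codimension-implies-closed-range lemma to be the only genuinely non-formal ingredient; it follows from the open mapping theorem applied to the continuous bijection $(H/\ker Q)\oplus F\to H$, $(\bar u,f)\mapsto Q\bar u+f$, where $F$ is a finite-dimensional algebraic complement of $\im Q$. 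Combining the two implications yields the stated equivalence.
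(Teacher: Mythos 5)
Your proof is correct and follows the same core route as the paper's: compute the differential $Q\,\delta u - \delta p\,\frac{d\Phi}{du}$, characterize surjectivity as closedness of the image together with triviality of its orthogonal complement, and use the symmetry of $Q$ to identify that complement with $\ker Q \cap \ker \frac{d\Phi}{du}$. Where you go beyond the paper is in the closedness bookkeeping: the surjectivity criterion naturally refers to the image of the full differential, namely $\im Q + \im A^*$, whereas the lemma's hypothesis concerns $\im Q$ itself, and the paper silently conflates the two. Your two implications --- a closed subspace plus a finite-dimensional one is closed (giving regularity from the stated hypotheses), and, conversely, a range of finite codimension is closed by the open mapping theorem (giving closedness of $\im Q$ from regularity) --- are precisely what is needed to justify that identification, and the second of these is genuinely absent from the paper's informal argument. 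Both steps are stated and used correctly, so your write-up is, if anything, the more complete of the two.
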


\begin{proof}
We compute the differential of the map (\ref{eq:map}):
$$
Q\delta u -\delta p \frac{d \Phi}{d u}.
$$
The differential is surjective if and only if it's image is closed and has a trivial orthogonal complement or, equivalently, the existence  of $w\in T_{u} U$ s.t. for any $(\delta u,\delta p)$
\begin{equation}
\label{eq:diff}
\langle Q\delta u,w\rangle -\delta p \frac{d \Phi}{d u}w = 0,
\end{equation}
 implies that $w = 0$. But since $(\delta u,\delta p)$ are arbitrary and $Q$ is symmetric, (\ref{eq:diff}) is equivalent to the existence of $w\in T_u U$, s.t. we have simultaneously
$$
Qw = 0, \qquad \frac{d\Phi}{du}w = 0.
$$
Then by assumption we have $w = 0$ and the result follows.
\end{proof}

\begin{proof}[Proof of Proposition~\ref{prop:properties}]
The fact that $L$ is a manifold is just a consequence of the implicit function theorem. We prove now that $\pi_M$ is an immersion. Differential of this map takes the tangent space to $L$ and maps it to the space $L_{(u,\lambda)}(\varphi,\Phi)$ (see (\ref{eq:L})). It ``forgets'' $\delta u$. So a non-trivial kernel of the differential must lie in the subspace $\delta p=\delta q = 0$. But from the definition of $L_{u,\lambda}(\varphi,\Phi)$ we have that in this case $Q\delta u =0$ and $D_u \Phi \delta u = 0$, which contradicts to the fact that the problem is Morse, as it can be seen from the previous lemma. Thus the differential is injective.

To prove that this immersion is Lagrangian it is enough to prove that $L_{(u,\lambda)}(\varphi,\Phi)$ is a Lagrangian subspace. Since $Q$ is symmetric, it is easy to see that this subspace is isotropic. Take $(\delta p_1,\delta q_1)$ and $(\delta p_2, \delta q_2)$ in  $L_{(u,\lambda)}(\varphi,\Phi)$. Then we compute
$$
\sigma\left( (\delta p_1,\delta q_1),(\delta p_2,\delta q_2) \right) = \delta p_1\delta q_2 - \delta p_2\delta q_1 = \delta p_1\frac{d\Phi}{du}\delta u_2 - \delta p_2\frac{d\Phi}{du}\delta u_1 = \langle Q\delta u_1,\delta u_2\rangle - \langle Q\delta u_2,\delta u_1\rangle = 0.
$$

Now it just remains to prove that the dimension of this space is equal to $n$. We are going to do it only in the finite-dimensional setting but this is true in general~\cite{ABB}. Note that if we fix $(\delta p,\delta u)$ as in the definition of $L_{(u,\lambda)}(\varphi,\Phi)$, then $\delta q$ is determined automatically. So it is enough to study the map
$$
S:(\delta u,\delta p) \mapsto Q\delta u - \delta p\frac{d\Phi}{du}.
$$
Then clearly $\dim L_{(u,\lambda)}(\varphi,\Phi) = \dim \ker S$. But we have seen in the proof of the previous lemma, that this map is actually surjective. Then $\dim \im S = \dim U$ and we have
$$
\dim \ker S = \dim( U \times \R^n) - \dim U = n.
$$
\end{proof}

\begin{figure}
\begin{center}
\includegraphics[scale=0.5]{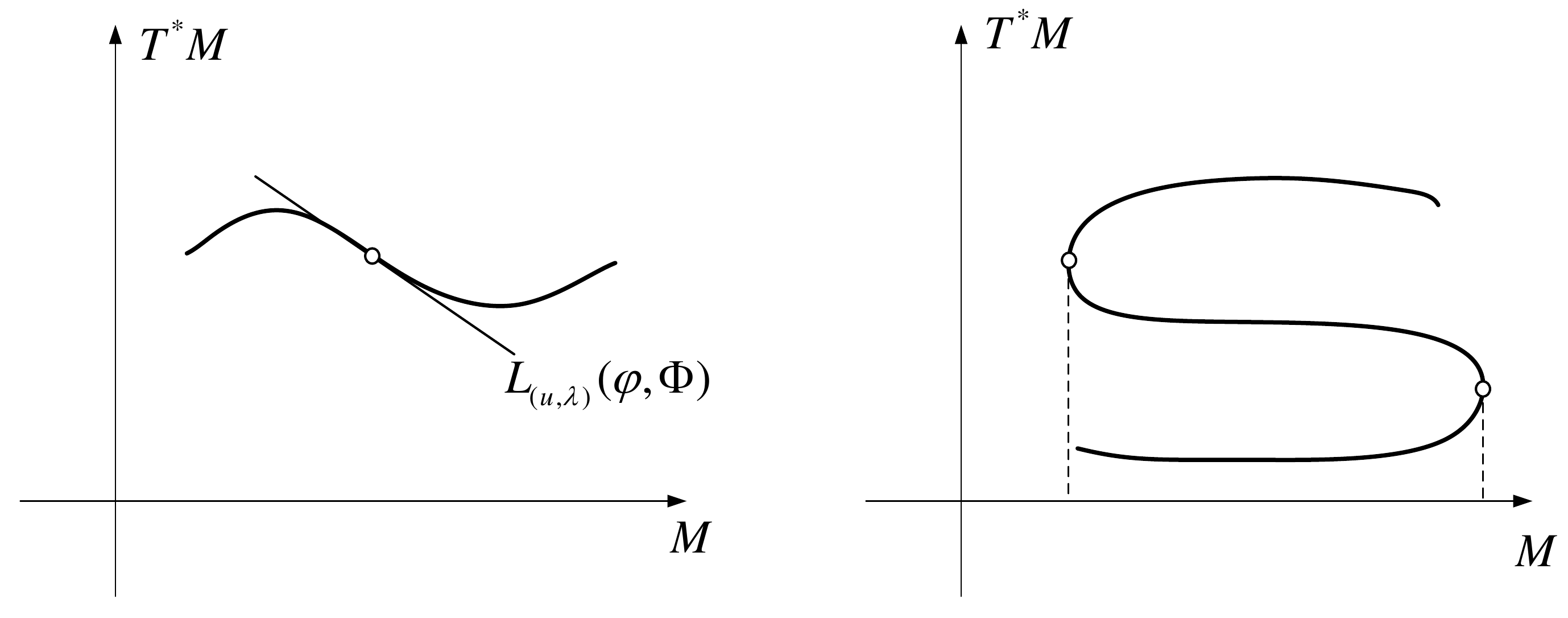}
\caption{Lagrangian manifolds in the simplest case of $\dim M = 1$}
\label{fig1}
\end{center}
\end{figure}

In a Morse problem, the Lagrange submanifold $\cL$ often contains all necessary information about the Morse index index and the nullity of the Hessian. We can already give a geometric characterization of the nullity, while for the geometric characterization of the index we will need more facts from linear symplectic geometry.
\begin{proposition}
$\Hess_u \varphi$ has a non-trivial kernel if and only if $\lambda$ is a critical point of the map $\pi_M|_\cL$, where  $\pi_M: T^*M \to M$ is the standard projection. The dimension of the kernel of the Hessian is equal to the dimension of the kernel of the differential of $\pi_M|_\cL$.
\end{proposition}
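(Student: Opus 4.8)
The plan is to compute the differential of $\pi_M|_\cL$ by hand and recognize its kernel as the radical of the Hessian form. Since $\pi_M$ is the base projection, which in Darboux coordinates sends $(\delta p,\delta q)\mapsto \delta q$, its restriction to $\cL$ has differential at $\lambda$ given by sending a tangent vector $(\delta p,\delta q)\in L_{(u,\lambda)}(\varphi,\Phi)$ to $\delta q$. Hence a tangent vector lies in $\ker d(\pi_M|_\cL)$ precisely when $\delta q=0$, and the whole statement reduces to identifying the space of such vectors with $\ker\Hess_u\varphi$.

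Next I would unwind the defining conditions~(\ref{eq:L}). A vector $(\delta p,0)\in L_{(u,\lambda)}(\varphi,\Phi)$ is exactly one for which there exists $\delta u$ with $\frac{d\Phi}{du}\delta u=0$ and $Q\delta u=\delta p\frac{d\Phi}{du}$. The first condition says $\delta u\in\ker D_u\Phi$. Since $\Phi$ is a submersion, $D_u\Phi$ is surjective and the image of its transpose is the annihilator of $\ker D_u\Phi$; therefore the second condition, ``$Q\delta u=\delta p\frac{d\Phi}{du}$ for some $\delta p$'', is equivalent to ``$Q\delta u$ annihilates $\ker D_u\Phi$'', i.e. $\langle Q\delta u,w\rangle=0$ for every $w\in\ker D_u\Phi$. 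Because $Q$ is symmetric, this is precisely the condition that $\delta u$ lie in the radical of the bilinear form $(v,w)\mapsto\langle Qv,w\rangle$ on $\ker D_u\Phi$, that is, $\delta u\in\ker\Hess_u\varphi$.

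Finally I would match dimensions using that $\pi|_L$ is a Lagrangian immersion (Proposition~\ref{prop:properties}). On $L_{(u,\lambda)}(\varphi,\Phi)$ the datum $\delta u$ determines $\delta q$ and, by surjectivity of $D_u\Phi$, also $\delta p$; conversely the immersion property guarantees that $(\delta p,\delta q)=(\delta p,0)$ forces the corresponding $\delta u$ to be unique, since an ambiguity $w\in\ker D_u\Phi$ with $Qw=0$ is excluded by the Morse condition via the preceding lemma. Thus $\delta u\mapsto(\delta p,0)$ is a linear isomorphism from $\ker\Hess_u\varphi$ onto $\ker d(\pi_M|_\cL)$, giving the asserted equality of dimensions. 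In particular $\Hess_u\varphi$ has nontrivial kernel if and only if $d(\pi_M|_\cL)$ does; and since $\dim\cL=\dim M=n$, the latter is equivalent to $\lambda$ being a critical point of $\pi_M|_\cL$. The main thing to get right is this last bijection: its content lies entirely in the Morse/immersion hypothesis, without which $\ker\Hess_u\varphi$ and $\ker d(\pi_M|_\cL)$ need not have the same dimension.
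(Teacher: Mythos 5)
Your proof is correct and follows essentially the same route as the paper's: identify critical points of $\pi_M|_\cL$ with vertical tangent directions $(\delta p,0)$ in $L_{(u,\lambda)}(\varphi,\Phi)$, then use the submersion property of $\Phi$ to translate ``$Q\delta u=\delta p\frac{d\Phi}{du}$ for some $\delta p$'' into ``$Q\delta u$ annihilates $\ker D_u\Phi$'', i.e.\ $\delta u\in\ker\Hess_u\varphi$. You are in fact slightly more thorough than the paper, which only sketches the two implications: your verification, via the Morse condition, that $\delta u\mapsto(\delta p,0)$ is a linear isomorphism is exactly what the equality-of-dimensions claim requires, and the paper leaves it implicit.
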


Schematically this situation is depicted in the Figure~\ref{fig1} on the right.

\begin{proof}
Note that $\lambda$ is a critical point of $\pi_M|_\cL$ if and only if the tangent space $L_{(u,\lambda)} (\varphi,\Phi)$ contains a vertical direction $(\delta p,0)$. If this is the case, by definition there exists $\delta u\in \ker D_u \Phi$, s.t.
\begin{equation}
\label{eq:cond}
Q\delta u = \delta p\frac{d\Phi}{du}.
\end{equation}
Then clearly for any $v\in \ker D_u \Phi$,  we have $\langle Q\delta u,v \rangle = 0$.

On the contrary if $D_u\Phi \delta u = 0$ and $\delta u$ belongs to the kernel of the Hessian, then for any $v\in \ker D_u \Phi$ we have $\langle Q\delta u,v \rangle = 0$ and $Q\delta u$ must be a linear combinations of the rows of $D_u \Phi$, i.e. there exists $\delta p\in T_\lambda(T^*_q M)$, s.t. (\ref{eq:cond}) holds.
\end{proof}

If the problem is Morse, then the subspace $L_{(u,\lambda)}(\varphi,\Phi)$  defined in (\ref{eq:L}) is Lagrangian. However, our goal is to handle degenerate cases and a starting point is the following surprising fact that is valid without any regularity assumption on $(\varphi,\Phi)$; in particular, $u$ may be a critical point of $\Phi$.

\begin{proposition}
If $U$ is finite dimensional, then the defined in (\ref{eq:L}) space $L_{(u,\lambda)}(\varphi,\Phi)$ is a Lagrangian subspace of $T_{\lambda}(T^*M)$.

\end{proposition}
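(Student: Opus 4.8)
The plan is to separate the two halves of the Lagrangian condition and notice that only one of them ever used regularity. Isotropy of $L_{(u,\lambda)}(\varphi,\Phi)$ costs nothing: the computation in the proof of Proposition~\ref{prop:properties} showing that $\sigma((\delta p_1,\delta q_1),(\delta p_2,\delta q_2)) = \langle Q\delta u_1,\delta u_2\rangle - \langle Q\delta u_2,\delta u_1\rangle = 0$ used only the symmetry of $Q$ and never the Morse hypothesis, so it survives verbatim. The entire difficulty is therefore concentrated in the dimension count: one must show $\dim L_{(u,\lambda)}(\varphi,\Phi) = n$, and it is here that the earlier argument collapses, since it relied on the surjectivity of the map $S$ furnished by the Morse property.

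Write $m = \dim U$, abbreviate $A := \frac{d\Phi}{du} : T_uU \to T_qM$ with transpose $A^*$, and keep the symmetric operator $Q$. I would realize $L_{(u,\lambda)}(\varphi,\Phi)$ as the image of a single linear map. Consider $S : T_uU \times T_q^*M \to T_u^*U$, $(\delta u,\delta p) \mapsto Q\delta u - A^*\delta p$, together with the projection $\Psi(\delta u,\delta p) = (\delta p, A\delta u)$. Unwinding definition~(\ref{eq:L}) shows exactly that $L_{(u,\lambda)}(\varphi,\Phi) = \Psi(\ker S)$, and the restriction $\Psi|_{\ker S}$ has kernel $(\ker Q \cap \ker A) \times \{0\}$. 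Rank--nullity applied to $\Psi|_{\ker S}$ then gives
$$
\dim L_{(u,\lambda)}(\varphi,\Phi) = \dim\ker S - \dim(\ker Q \cap \ker A).
$$

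Next I would evaluate both terms on the right. Since $S$ maps the $(m+n)$-dimensional space $T_uU \times T_q^*M$ into the $m$-dimensional space $T_u^*U$, rank--nullity gives $\dim\ker S = (m+n) - \dim\im S$, and visibly $\im S = \im Q + \im A^*$. The crux --- and the only place where both the symmetry of $Q$ and finite-dimensionality are genuinely used --- is the identity
$$
\im Q + \im A^* = (\ker Q \cap \ker A)^\perp,
$$
which follows from $\im Q = (\ker Q)^\perp$ (valid precisely because $Q$ is symmetric) and $\im A^* = (\ker A)^\perp$, combined with the elementary fact $(\ker Q)^\perp + (\ker A)^\perp = (\ker Q \cap \ker A)^\perp$. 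Hence $\dim\im S = m - \dim(\ker Q \cap \ker A)$; feeding this together with $\dim\ker S = (m+n) - \dim\im S$ into the displayed formula for $\dim L_{(u,\lambda)}(\varphi,\Phi)$ makes the term $\dim(\ker Q \cap \ker A)$ cancel, leaving $\dim L_{(u,\lambda)}(\varphi,\Phi) = n$.

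I expect this cancellation to be the heart of the matter. In the Morse case $S$ is onto and $\ker Q \cap \ker A = 0$, so both corrections vanish separately; in the degenerate case each is nonzero, but the symmetry of $Q$ forces them to balance exactly. Finite-dimensionality is what licenses the orthogonal-complement (annihilator) manipulations --- in infinite dimensions one would additionally have to worry about the closedness of $\im Q$ --- which is exactly why the proposition is stated for finite-dimensional $U$.
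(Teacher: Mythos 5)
Your proof is correct. Note that the paper itself offers no proof of this proposition---it is explicitly left to the reader as an ``interesting linear algebra exercise''---so the only comparison available is with the paper's proof of Proposition~\ref{prop:properties}, the Morse case. Your argument is exactly the degeneration-stable version of that proof: there, the Morse hypothesis makes the map $S$ surjective and forces $\ker Q\cap\ker A=0$, so the two corrections vanish separately and $\dim L_{(u,\lambda)}(\varphi,\Phi)=\dim\ker S - 0 = n$ directly; you instead keep track of both defects---the codimension of $\im S=\im Q+\im A^*$ in $T_u^*U$ and the kernel $(\ker Q\cap\ker A)\times\{0\}$ of $\Psi|_{\ker S}$---and show via the annihilator identity $\im Q+\im A^*=(\ker Q\cap\ker A)^\perp$ (which needs exactly finite-dimensionality and the symmetry of $Q$) that both equal $\dim(\ker Q\cap\ker A)$, so they cancel. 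This is the right mechanism, and your observation that the isotropy computation from Proposition~\ref{prop:properties} survives verbatim (it uses only symmetry of $Q$, never the Morse hypothesis) is accurate. Your closing remark also matches the paper's own discussion: the example following the proposition (an injective, non-surjective symmetric $Q$ on a Hilbert space, with $\im Q$ dense but not closed, giving $L=\{(0,0)\}$) is precisely the failure mode that your annihilator manipulation rules out in finite dimensions, where images are automatically closed.
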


We leave the proof of this interesting linear algebra exercise to the reader. Next example shows that finite dimensionality of $U$ is essential.

Let $M = \R$ and $U$ be a Hilbert space. Then $a = \Phi'_u$ is an element of the dual space $U^*,\ T^*M=\mathbb R^2$, and Lagrangian subspaces are just one-dimensional subspaces of $\mathbb R^2$. We set
 $L=L_{(u,\lambda)}(\varphi,\Phi)$. By the definition we have
$$
L = \{(\delta p, \delta q): \exists \,v\in U\, ,\, Q v = (\delta p) a, \delta q = \langle a, v\rangle\}
$$
Assume that $Q:U\to U^*$ is injective and not surjective. If $a\notin \im Q$, then $L = \{(0,0)\}$ with a unique lift $v = 0$.

Injectivity and symmetricity of $Q$ imply that $\im Q$ is everywhere dense in $U$, hence $\mathrm{im}Q$ is not closed in the just described example.
Now we drop the injectivity assumption but assume that $\im Q = \overline{\im Q}$. Let us show that $L$ is 1-dimensional in this case. Indeed, the self-adjointness of $Q$ implies that $\ker Q \oplus \im Q = U$. Then we have two possible situations
\begin{enumerate}
\item $a\in \im Q$. Then there is a unique preimage of $a$ in $\im Q$, that we denote by $v = Q^{-1}a$. We get
$$
L = span\{(1, \langle a, Q^{-1} a \rangle )\},
$$
where $Q^{-1}$ is a pseudo-inverse.
\item $a\notin \im Q$, then we must have $\delta p = 0$ and $v \in \ker Q$. There exists $v\in\ker Q$ such that $\langle a,v\rangle\ne 0$, and we obtain
$$
L = span\{(0,\langle a ,v \rangle)\}.
$$
\end{enumerate}

If $\dim U<\infty$, then $\im Q$ is automatically closed and $L$ is Lagrangian as we have seen.

\subsection{Lagrangian Grassmanian and Maslov index}
\label{sec:lagr_gr}

We are going to give a geometric interpretation of the Morse index of the Hessian in terms of some curves of Lagrangian subspaces. To do this, we need some results about the geometry of the set of all Lagrangian subspaces of a given symplectic space $(W,\sigma)$. This set has a structure of a smooth manifold and is called the \emph{Lagrangian Grassmanian} $L(W)$. We give just the basic facts about $L(W)$. For more information see~\cite{gosson}.

To construct a chart of this manifold we fix a Lagrangian subspace $\Lambda_2 \in L(W)$ and consider the set of all Lagrangian subspaces transversal to $\Lambda_2$, which we denote by $\Lambda_2^\pitchfork$ (the symbol $\pitchfork$ means "transversal"). By applying a Gram-Schmidt like procedure that involves $\sigma$, we can find some Darboux coordinates $(p,q)$ on $W$, s.t. $\Lambda_2 = \{(0,q)\}$ (see~\cite{ABB} for details). Then $\Lambda_0 = \{(p,0)\}$ belongs to $\Lambda_2^\pitchfork$ and any other $\Lambda_1\in \Lambda_2^\pitchfork$ can be defined as a graph of a linear map from $\Lambda_0 $ to $\Lambda_2$. As we have seen in the Section~\ref{sec:bas_symp} the matrix of this map is symmetric and we obtain the identification of $\Lambda_2^\pitchfork$ with the space of symmetric $n\times n$-matrices that gives the desired local coordinates on $L(W)$.

\begin{figure}[h]
\begin{center}
\includegraphics[scale=0.7]{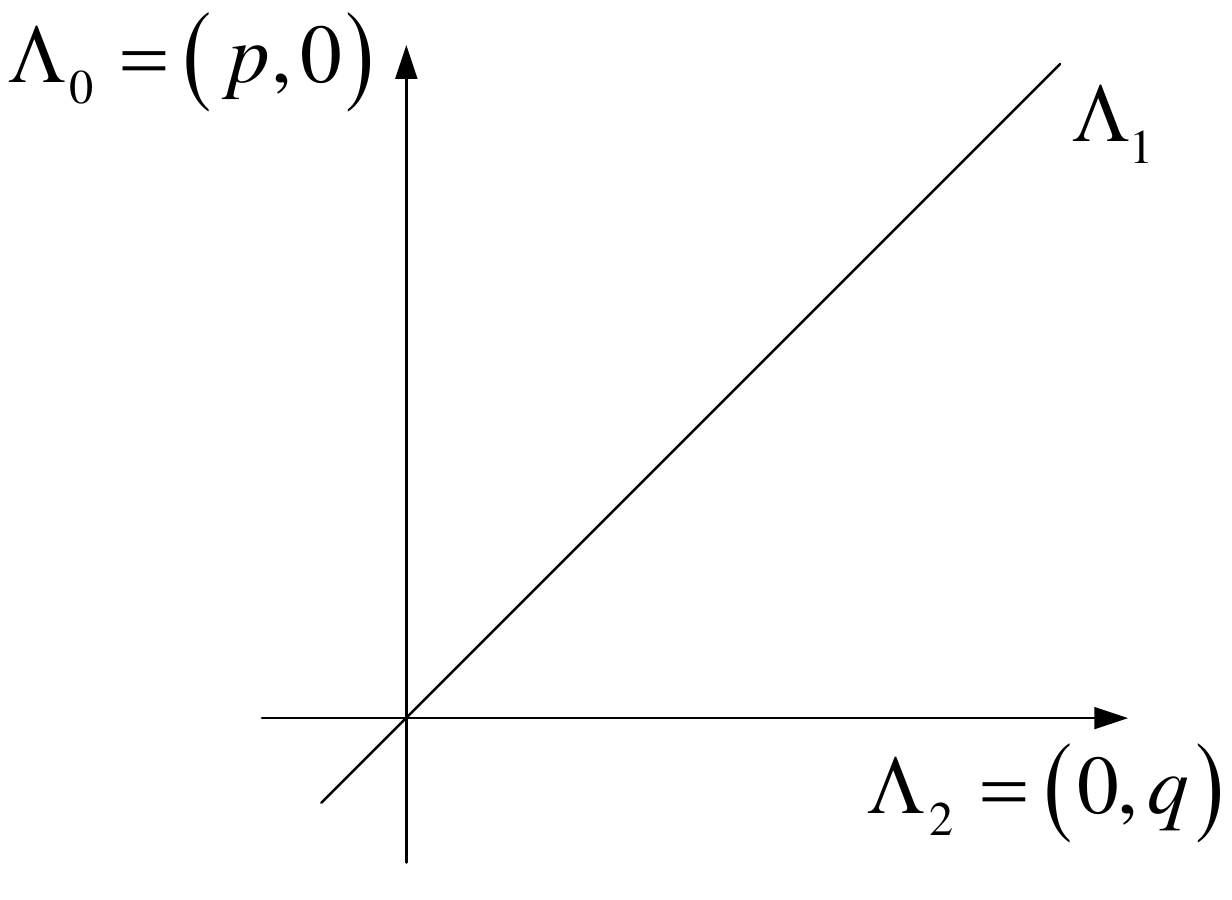}
\end{center}
\caption{Defining a Lagrangian plane $\Lambda_1$ as a graph of a quadratic form from $\Lambda_0$ to $\Lambda_2$}
\end{figure}

Since symplectic maps preserve the symplectic form, they also map Lagrangian subspaces to Lagrangian subspaces. This action is transitive, i.e. there are no invariants of the symplectic group acting on $L(W)$. If we consider the action of the symplectic group on pairs of Lagrangian spaces, then the only invariant is the dimension of their intersection~\cite{gosson}. But triples $(\Lambda_0,\Lambda_1,\Lambda_2)$ do have a non-trivial invariant, that is called the \emph{Maslov index} of the triple or the \emph{Kashiwara index}.

To define it suppose that $\Lambda_0,\Lambda_1,\Lambda_2 \in L(W)$, s.t. $\Lambda_0 \pitchfork \Lambda_1$ and $\Lambda_0 \pitchfork \Lambda_2$. Since the symplectic group acts transitively on the space of pairs of transversal Lagrangian planes, we can assume without any loss of generality that $\Lambda_0 = \{(p,0)\}$ and $\Lambda_2 = \{(0,q)\}$. Then we can identify $\Lambda_1$ with a graph $\{(p,Sp)\}$, where $S$ is a symmetric matrix. The invariant $\mu$ of this triple is then defined as
$$
\mu(\Lambda_0,\Lambda_1,\Lambda_2) = \sgn S.
$$

This invariant can be also defined intrinsically as the signature of a quadratic form $\tilde{q}: \Lambda_1\times \Lambda_1 \to R$, that is defined as follows. Since $\Lambda_0 \pitchfork \Lambda_2$, any $\lambda \in \Lambda_1$ can be decomposed as $\lambda= \lambda_0 + \lambda_2$, where $\lambda_i \in \Lambda_i$. Then we set $\tilde{q}(\lambda) = \sigma (\lambda_0,\lambda_2)$. One can check that those definitions agree.

This invariant has a couple of useful algebraic properties. The simplest ones is the antisymmetry:
$$
\mu(\Lambda_2,\Lambda_1, \Lambda_0) = - \mu(\Lambda_0,\Lambda_1, \Lambda_2);
$$
$$
\mu(\Lambda_0,\Lambda_2, \Lambda_1) = -\sgn (S^{-1})= - \mu(\Lambda_0,\Lambda_1, \Lambda_2)
$$
We are going to state and prove another important property called the chain rule, after we look more carefully at the geometry of $L(W)$. Let us fix some $\Delta \in L(W)$. The \emph{Maslov train} $\cM_\Delta$ is the set $\cM_\Delta = L(W)\setminus \Delta^\pitchfork$ of Lagrangian planes that have a non-trivial intersection with $\Delta$. It is an algebraic hyper-surface with singularities and its intersection with a coordinate chart containing $\Delta$ can be identified with the set of degenerate symmetric matrices.

The set of nonsmooth points of the hyper-surface $\cM_\Delta$ consist of Lagrangian subspaces that have an intersection with $\Delta$ of dimension two or more. It easy to check that this singular part has codimension two in $\cM_\Delta$. It follows that the intersection number mod 2 of $\cM_\Delta$ with any continuous curve whose endpoints do not belong to $\cM_\Delta$ is well defined and is homotopy invariant. For example when $\dim W = 4$, we have that the intersection of $\cM_\Delta$ with a coordinate chart is identified with $2\times 2$ symmetric matrices with zero determinant. This is a cone whose points except the origin correspond to Lagrangian planes that have a one-dimensional intersection with $\Delta$. The origin represents the dimension two intersection with $\Delta$, which is equal to $\Delta$ itself in this case. Clearly a general position curve in $L(W)$ does not intersect the origin (see Figure~\ref{fig3}) as well as a general position homotopy of curves, and so the intersection number mod 2 is well defined.

\begin{figure}[h]
\begin{center}
\includegraphics[scale=1]{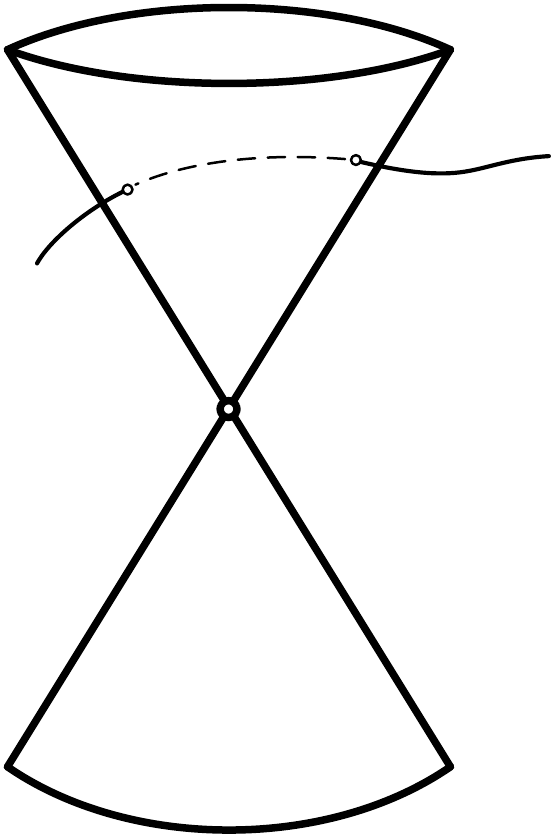}
\end{center}
\caption{A curve in a local chart of the Lagrangian Grassmanian $L(\R^4)$ and the
Maslov train}
\label{fig3}
\end{figure}

We would like to define the integer-valued intersection number of curves in $L(W)$ with $\cM_\Delta$, and for this we need a coorientation of the smooth part of $\cM_\Delta$. Let $\Lambda_t\in L(W)$ s.t. $\Lambda_0 = \Lambda$ and consider any $\lambda_t \in \Lambda_t$ s.t. $\lambda_0 = \lambda$. Then we define
$$
\underline{\dot{\Lambda}}(\lambda) = \sigma(\lambda,\dot{\lambda}).
$$
Thus we see that to any tangent vector $\dot\Lambda$ we can associate a quadratic form $\underline{\dot{\Lambda}}$. Is easy to see that $\underline{\dot{\Lambda}}(\lambda)$ is indeed a well-defined quadratic form, i.e. that $\sigma(\lambda,\dot{\lambda})$ depends only on $\dot\Lambda$ and $\lambda$. Moreover, $\dot\Lambda\mapsto \underline{\dot{\Lambda}}(\lambda),\ \dot\Lambda\in T_\Lambda L(W)$ is an isomorphism of $T_\Lambda L(W)$ on the space of quadratic forms on $\Lambda$.

From the previous discussion we know that $\dim(\Lambda_t \cap \Delta)=1$ if $\Lambda_t$ is a smooth point of $\cM_\Delta$. Let $\lambda \in \Lambda_t \cap \Delta,\ \lambda\ne 0$; the intersection is transversal at the point $\Lambda_t$ if and only if $\underline{\dot{\Lambda}}(\lambda)\ne 0$. We say that the sign of the intersection is positive, if $\underline{\dot{\Lambda}}(\lambda) > 0$ and negative otherwise. The intersection number of a continuous curve $t\mapsto\Lambda_t$ with $\cM_\Delta$ is called the Maslov index of the curve with respect to $\cM_\Delta$. Note that Maslov index of a closed curve (i.e. a curve without endpoint) does not depend on the choice of $\Delta$. Indeed, the train $\cM_\Delta$ can be transformed to any other train by a continuous one-parametric family of symplectic transformations and Maslov index is a homotopy invariant.

The Maslov index of a curve and the Maslov index of a triple are closely related. Let $\gamma: [0,1] \to L(W)$, s.t. the whole curve does not leave the chart $\Delta^\pitchfork$. Then we have
\begin{equation}
\label{eq:maslov_formula}
2 \gamma \circ \cM_\Lambda = \mu(\Lambda,\gamma(1),\Delta)-\mu(\Lambda,\gamma(0),\Delta)
\end{equation}
what easily follows from definitions.

Now we can state the last property.
\begin{lemma}[The chain rule]
Let $\Lambda_i \in L(W)$, $i=0,1,2,3$. Then
$$
\mu(\Lambda_0,\Lambda_1,\Lambda_2)+\mu(\Lambda_1,\Lambda_2,\Lambda_3)+\mu(\Lambda_2,\Lambda_3,\Lambda_0)+\mu(\Lambda_3,\Lambda_0,\Lambda_1) = 0.
$$
\end{lemma}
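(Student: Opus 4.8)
The statement is the cocycle (coboundary) property of the triple index, so the plan is to exhibit the four-term sum as an obstruction that vanishes because the four triples are precisely the faces of a single tetrahedron on the vertices $\Lambda_0,\Lambda_1,\Lambda_2,\Lambda_3$. First I would reduce to the generic configuration in which the two diagonal pairs are transversal, $\Lambda_0\pitchfork\Lambda_2$ and $\Lambda_1\pitchfork\Lambda_3$ (and, after a small perturbation, all six pairs transversal), so that every $\mu$ occurring is unambiguously defined through the intrinsic form $\tilde q$; the boundary (degenerate) configurations would be reinstated at the very end by continuity together with the more robust computation described below.

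For the generic case I would use the relation (\ref{eq:maslov_formula}) as the bridge between the triple index and crossings of the Maslov train. Transitivity of $\gSp(W)$ on transversal pairs lets me work in a single Darboux chart, join the four planes by short arcs $\gamma_{01},\gamma_{12},\gamma_{23},\gamma_{30}$ lying in the appropriate charts $\Delta^\pitchfork$, and read off from (\ref{eq:maslov_formula}) that a difference of triple indices sharing two of their entries is a signed crossing number of the connecting arc with a fixed train. Concatenating the arcs as the oriented boundary of the tetrahedron, each edge appears in two faces with opposite orientation and cancels (this is exactly $\partial^2=0$); since the Maslov index of a curve is additive under concatenation and odd under reversal, the four face-contributions add up to the Maslov index of a null-homologous loop, which is zero. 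Homotopy invariance of the mod-boundary crossing count — legitimate because the singular part of $\cM_\Delta$ has codimension two, as recalled above — ensures the answer is independent of the arcs chosen.

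I expect the bulk of the work to be the bookkeeping of orientations and signs: orienting the four faces coherently as $\partial$ of the solid tetrahedron, tracking the coorientation of $\cM_\Delta$ given by $\underline{\dot\Lambda}(\lambda)=\sigma(\lambda,\dot\lambda)$, and absorbing the factor $2$ in (\ref{eq:maslov_formula}); this is also the point at which one must be most careful to match the signs dictated by the antisymmetry relations $\mu(\Lambda_2,\Lambda_1,\Lambda_0)=-\mu(\Lambda_0,\Lambda_1,\Lambda_2)$ and $\mu(\Lambda_0,\Lambda_2,\Lambda_1)=-\mu(\Lambda_0,\Lambda_1,\Lambda_2)$. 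A parallel, less geometric route that simultaneously disposes of the degenerate cases is to represent each term as the signature of the quadratic form $(\xi_0,\xi_1,\xi_2)\mapsto\sigma(\xi_0,\xi_1)+\sigma(\xi_1,\xi_2)+\sigma(\xi_2,\xi_0)$ on $\Lambda_0\oplus\Lambda_1\oplus\Lambda_2$ and to prove the identity by exhibiting the relevant difference of direct sums as a split form of signature zero; verifying that splitting is the step I expect to demand the most care.
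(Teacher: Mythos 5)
Your main route has a genuine gap at its keystone. You want the four triple indices to appear as ``face contributions'' of a tetrahedron, to cancel edge-by-edge, and to conclude because ``the four face-contributions add up to the Maslov index of a null-homologous loop, which is zero.'' But the loop obtained by concatenating $\gamma_{01},\gamma_{12},\gamma_{23},\gamma_{30}$ has no reason to be null-homologous: $\pi_1\bigl(L(W)\bigr)\cong\Z$, and the Maslov index of closed curves is exactly the invariant that detects this class. If you force the loop to be contractible by choosing all four arcs inside one chart $\Sigma^\pitchfork$ (which is contractible, being identified with symmetric matrices), then applying (\ref{eq:maslov_formula}) arc by arc merely telescopes to $0=0$; the desired terms $\mu(\Lambda_0,\Lambda_1,\Lambda_2),\dots$ never appear. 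Moreover, the $\partial^2=0$ cancellation is never instantiated: (\ref{eq:maslov_formula}) converts \emph{one arc in a specified chart} plus \emph{one specified train} into a difference of two triple indices sharing two entries, so to produce the four stated terms you must say, for each arc, which chart contains it and against which train its crossings are counted. Your $4$-cycle makes a natural choice impossible even in principle: the curve passes through each $\Lambda_i$, and a crossing number with $\cM_{\Lambda_i}$ is undefined for a curve with an endpoint on that train (each $\Lambda_i$ lies in its own train).

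The paper's proof uses a different and essentially forced configuration: a bigon, not a $4$-cycle. Connect $\Lambda_0$ to $\Lambda_2$ by one curve inside $\Lambda_3^\pitchfork$ and return by another inside $\Lambda_1^\pitchfork$. This closed curve is in general \emph{not} null-homotopic; its (possibly nonzero) Maslov index is instead computed twice --- against $\cM_{\Lambda_1}$, which only the first arc can cross, and against $\cM_{\Lambda_3}$, which only the second arc can cross --- and the identity follows from the train-independence of the index of a closed curve together with (\ref{eq:maslov_formula}) applied to each arc in its own chart. So the nontrivial input is not nullhomology but invariance of the closed-curve index under change of train; that idea is absent from your argument. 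Your fallback route via the Kashiwara form on $\Lambda_0\oplus\Lambda_1\oplus\Lambda_2$ is a standard valid alternative, but you explicitly leave its crucial step (exhibiting the split form) unverified, so it does not close the gap; note also that carrying it out produces the cocycle identity in which the four cyclically written terms carry \emph{alternating} signs, so your worry about reconciling signs with the antisymmetry relations is well founded and is where the real work lies. Finally, your plan to recover degenerate configurations ``by continuity'' cannot work as stated: $\mu$ is integer-valued and jumps precisely when transversality fails.
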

\begin{proof}
Connect $\Lambda_0, \Lambda_2$ with two curves: one that is completely in $\Lambda_3^\pitchfork$ and another one that is completely in $\Lambda_1^\pitchfork$. Schematically this situation is depicted in Figure~\ref{fig4}. This gives a closed curve $\gamma$ in $L(W)$ and we can compute it's intersection with $\cM_{\Lambda_1}$ and $\cM_{\Lambda_3}$.
The chain rule follows from the identities
$ \gamma \circ \cM_{\Lambda_1} = \gamma \circ \cM_{\Lambda_3}$ and (\ref{eq:maslov_formula}).
\end{proof}

\begin{figure}[h]
\begin{center}
\includegraphics[scale=1]{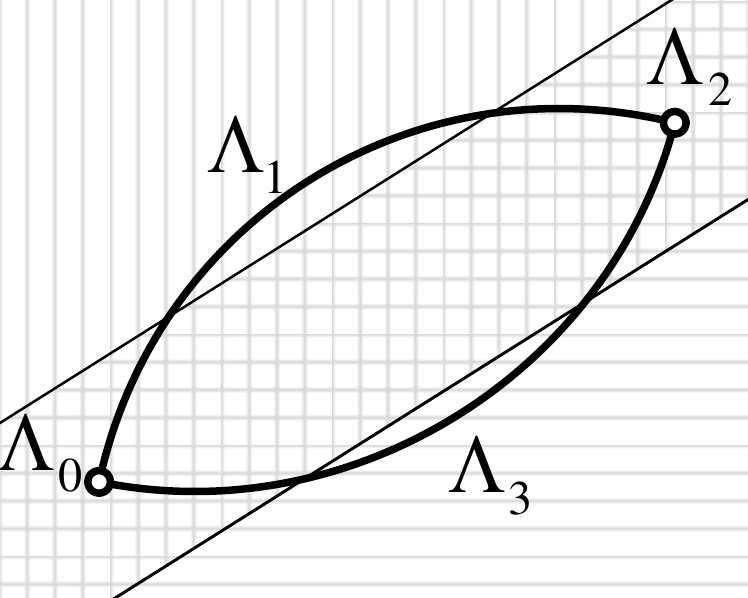}
\caption{An illustration to the proof. $\Lambda_3^\pitchfork$ correspond to the region with vertical lines and $\Lambda_1^\pitchfork$ to the region with the horizontal ones}
\label{fig4}
\end{center}

\end{figure}

The formula (\ref{eq:maslov_formula}) allows us to compute the Maslov index of a continuous curve, without putting it in general position and really computing the intersection. We just need to split the whole curve into small pieces, s.t. that each of them lies in a single coordinate chart, and then compute the index of the corresponding triples. This motivates the following definition. A curve $\gamma(t) \in L(W)$ is called \emph{simple}, if there exists $\Delta\in L(W)$, s.t. $\gamma(t)\in \Delta^\pitchfork$.

\section{Lecture 2}

\subsection{Morse Index}
\label{sec:morse}

Now we return to the study of the second variation for a finite-dimensional Morse problem. We have seen that $\cL$ is an immersed Lagrangian submanifold. This allows us to define the Maslov cocycle in the following way. We fix a curve $\gamma: [t_0,t_1]\to \cL$ and consider the corresponding curve $L_t: t\mapsto T_{\gamma(t)}\cL$. Along $\gamma(t)$ we have vertical subspaces $\Pi = T_{\gamma(t)}(T^*_{q(t)}M)$ which lie in different symplectic spaces. Vector bundles over the segment $[t_0,t_1]$ are trivial, so by using a homotopy argument, we can assume that the symplectic space and the vertical subspace $\Pi$ are fixed. We define the \emph{Maslov cocycle} as
$$
\mu(\gamma(t)) = L_t \circ \cM_\Pi.
$$

Then we have the following theorem.
\begin{theorem}
Let $\gamma(t)$ be a curve connecting $q_{t_0}$ with $q_{t_1}$. Then
$$
-2\mu(\gamma) = \delta \left( \sgn \Hess_u \varphi|_{\Phi^{-1}}(q) \right) := \sgn \Hess_{u_{t_1}} \varphi|_{\Phi^{-1}(q_{t_1})} - \sgn \Hess_{u_{t_0}} \varphi|_{\Phi^{-1}(q_{t_0})}
$$
\end{theorem}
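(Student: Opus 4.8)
The plan is to reduce the global statement to a pointwise identity relating the signature of the Hessian to a Kashiwara index, and then to telescope along a subdivision of the curve adapted to coordinate charts on $L(W)$. Concretely, I would first subdivide $[t_0,t_1]$ into subintervals $[s_i,s_{i+1}]$ so that each arc $t\mapsto L_t$ stays inside a single chart $\Delta_i^\pitchfork$, and so that the division points are not conjugate, i.e. $L_{s_i}\cap\Pi=0$ and $\Hess_{u_{s_i}}\varphi$ is nondegenerate. On such an arc, formula (\ref{eq:maslov_formula}) converts the Maslov cocycle into a difference of triple indices, $2(L\circ\cM_\Pi)|_{[s_i,s_{i+1}]}=\mu(\Pi,L_{s_{i+1}},\Delta_i)-\mu(\Pi,L_{s_i},\Delta_i)$, so it suffices to match each triple index with $-\sgn\Hess$ up to a constant that is the same at the two endpoints of the arc.

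The heart of the proof is the following pointwise lemma, which I would establish by linear algebra in Darboux coordinates with $\Pi=\{(\delta p,0)\}$. Writing a transversal reference as $\Delta=\{(C\delta q,\delta q)\}$ with $C$ symmetric, the intrinsic triple form $\tilde q(\lambda)=\sigma(\lambda_\Pi,\lambda_\Delta)$ on $\lambda=(\delta p,\delta q)\in L_t$ evaluates to $\langle\delta p,\delta q\rangle-\langle C\delta q,\delta q\rangle$. Using the lift $\delta u$ of (\ref{eq:L}), where $\delta q=\frac{d\Phi}{du}\delta u$ and $Q\delta u=\delta p\frac{d\Phi}{du}$, one gets $\langle\delta p,\delta q\rangle=\langle Q\delta u,\delta u\rangle$, hence $\tilde q=\langle\tilde Q_\Delta\delta u,\delta u\rangle$ with $\tilde Q_\Delta:=Q-\left(\frac{d\Phi}{du}\right)^{*}C\frac{d\Phi}{du}$. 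The Morse condition $\ker Q\cap\ker\frac{d\Phi}{du}=0$ makes the lift $\delta u\mapsto(\delta p,\delta q)$ injective, so it is an isomorphism $V_t\to L_t$ from the $n$-dimensional space $V_t:=\{\delta u:Q\delta u\in\im(\frac{d\Phi}{du})^{*}\}$, and therefore $\mu(\Pi,L_t,\Delta)=\sgn(\tilde Q_\Delta|_{V_t})$.

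I would then observe that $\ker\frac{d\Phi}{du}$ and $V_t$ are $\tilde Q_\Delta$-orthogonal (because $Qv\in\im(\frac{d\Phi}{du})^{*}$ annihilates $\ker\frac{d\Phi}{du}$, while the correction term vanishes on $\ker\frac{d\Phi}{du}$), that $\tilde Q_\Delta$ restricts to $\Hess$ on $\ker\frac{d\Phi}{du}$, and that $\tilde Q_\Delta$ is degenerate exactly when $L_t\cap\Delta\ne0$. At a division point the Hessian is nondegenerate, so $V_t\cap\ker\frac{d\Phi}{du}=\ker\Hess=0$ and $T_uU=\ker\frac{d\Phi}{du}\oplus V_t$ is a $\tilde Q_\Delta$-orthogonal splitting into nondegenerate blocks; additivity of signature gives the pointwise identity $\sgn\Hess_{u_t}=\sgn\tilde Q_\Delta-\mu(\Pi,L_t,\Delta)$. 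Since $L_t\pitchfork\Delta_i$ throughout the chart, $\tilde Q_{\Delta_i}$ is nondegenerate on all of $[s_i,s_{i+1}]$, so the constant $\sgn\tilde Q_{\Delta_i}$ agrees at $s_i$ and $s_{i+1}$; subtracting the identity at the two endpoints and summing over $i$ telescopes the left side to $\delta(\sgn\Hess)$ and the right side, via the consequence of (\ref{eq:maslov_formula}) above, to $-2\mu(\gamma)$.

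The step I expect to be the main obstacle is controlling signs and degeneracy near the conjugate points: one must verify that the intrinsic form $\tilde q$ computes the Kashiwara index with the orientation convention built into (\ref{eq:maslov_formula}), that the division points can be chosen non-conjugate while each arc sits inside one chart, and above all that the correction signature $\sgn\tilde Q_{\Delta_i}$ is locally constant. This last point is exactly what forces the use of a chart-adapted reference $\Delta_i$ rather than the fixed horizontal $\{(0,\delta q)\}$, for which $\sgn Q$ would be constant only on arcs that happen to miss the corresponding Maslov train.
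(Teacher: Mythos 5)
Your proposal is correct and follows essentially the same route as the paper's own sketch: subdivide the curve into chart-contained arcs with nondegenerate division points, split the signature of a chart-adapted total form into the Hessian part on $\ker\Phi'_u$ plus a complementary block identified with the Kashiwara triple index, use constancy of the nondegenerate total signature along each arc, and telescope via (\ref{eq:maslov_formula}). Your explicit shear $\tilde Q_\Delta = Q - \left(\frac{d\Phi}{du}\right)^{*}C\,\frac{d\Phi}{du}$ is exactly the paper's coordinate-change ``exercise'' written out, and your direct $\tilde Q_\Delta$-orthogonal splitting $T_uU=\ker\frac{d\Phi}{du}\oplus V_t$ plays the role of the paper's Lemma~\ref{lem:important}.
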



\begin{proof}[Sketch of the proof]
 We denote $\overline{Q}(v) = \langle Qv,v\rangle$. Then
$$
\Hess_u\varphi|_{\Phi^{-1}(q)}= \overline{Q}|_{\ker \Phi'_u}, \qquad \Phi(u) = q
$$
and the difference of the signatures will be the difference of the correspondent signatures of $\overline{Q}|_{\ker \Phi'_u}$. Now we do not restrict $Q$ to the kernel of $\Phi'_u$ and use the fact that it depends on the choice of coordinates. Indeed, the vertical subspace is fixed, but we have a freedom in choosing the horizontal space.

\textbf{Exercise:} Given a Lagrangian subspace $\Lambda\subset T_\lambda(T^*M)$ that is transversal to the fiber $T^*_qM$, there exist local coordinates in which $\Lambda=\{(0,x): x\in\mathbb R^n\}$. Operator $Q$ is non-degenerate iff the horizontal subspace $\{(0,x): x\in\mathbb R^n\}$ is transversal to $L_{(u,\lambda)}(\varphi,\Phi)$.

\smallskip
To proof the theorem, we may divide the curve into small pieces and check the identity separately for each piece.
In other words, we can assume that the curve is contained in the given coordinate chart and, according to the exercise, that $Q_t$ is not degenerate for all $t\in[t_0,t_1]$.

Then we can apply the following linear algebra lemma.
\begin{lemma}
\label{lem:important}
Let $E$ be a possibly infinite-dimensional Hilbert space, $Q$ a quadratic form on $E$ that is positive definite of a finite codimension subspace and $V$ a closed subspace of $E$. Then if we denote by $V^\perp_Q$ the orthogonal complement of $V$ in $E$ w.r.t. $Q$, the following formula is valid
$$
\ind^- Q = \ind^- Q|_V + \ind^- Q_{V^\perp_Q} + \dim(V\cap V^\perp_Q) - \dim(V\cap \ker Q).
$$
Moreover if $Q|_V$ is non degenerate, then $\dim(V\cap V^\perp_Q) = \dim(V\cap \ker Q) = 0$.
\end{lemma}
Thus by construction we get
$$
\sgn \overline{Q}_{t_i} =  \sgn \overline{Q}_{t_i}|_{\ker \Phi'_u} + \sgn \overline{Q}_{t_i}|_{(\ker \Phi'_u)^\perp_{Q_{t_i}}}.
$$
Since $Q_t$ is nondegenerate and continuously depends on $t$,  we have $\sgn Q_{t_1} = \sgn Q_{t_0}$. Then first summand is just the Hessian and one can show that
$$
\sgn\overline{Q}_{t_i}|_{(\ker \Phi'_u)^\perp_{Q_{t_i}}} = \sgn S_{t_i}=\mu\bigr(Ver,L_t,Hor\bigr),
$$
where $Ver=\{(\xi,0):\xi\in\mathbb R^n\},\ Hor=\{(0,x):x\in\mathbb R^n\}.$

The statement of the theorem now follows from (\ref{eq:maslov_formula}).
\end{proof}

What about the infinite-dimensional Morse problem? The signature of the Hessian is not defined in this case but difference of the signatures can be substituted by the difference of the Morse indices if $\Hess_u\varphi|_{\Phi^{-1}(q)}$ are positive definite on a finite codimension subspace and $\Hess_{u_{t_i}} \varphi|_{\Phi^{-1}(q_{t_i})},\ i=0,1,$ are nondegenerate.

\subsection{General case}
\label{sec:general}

Consider now a general, not necessary a Morse constrained optimization problem $(\varphi,\Phi)$ and a couple $(u,\lambda)$ that satisfies the Lagrange multiplier rule $d_u\phi=\lambda D_u\Phi$. In local coordinates:
$$
\lambda=(p,q);\qquad\varphi'_u = p \Phi'_u, \quad \Phi(u) = q.
$$
We would like to consider the subspace $L_{(u,\lambda)}(\varphi,\Phi)$ defined in~\ref{sec:sec_var}, but in general it is just an isotropic subspace. Nevertheless if $V \in \cU$ is  finite dimensional, then the space $L$ corresponding to $(\varphi,\Phi)|_V$ is Lagrangian and we denote it by $L_{(u,\lambda)}(\varphi,\Phi)|_V$.

The set of all finite-dimensional subspaces has a partial ordering given by inclusion. Moreover it is a directed set, therefore we can take a generalized limit over the sequence of nested subspaces. The existence of this limit is guaranteed by the following

\begin{theorem}[\cite{agr_feedback}]
The limit
$$
\fL_{(u,\lambda)}(\varphi,\Phi) = \lim_{V\nearrow U} L_{(u,\lambda)}(\varphi,\Phi)|_V
$$
exists if and only if $\ind^- Q|_{\Phi'_u}<\infty$.
\end{theorem}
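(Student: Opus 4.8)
The plan is to realize the entire net inside the compact Lagrangian Grassmannian and to show that $V\mapsto L_{(u,\lambda)}(\varphi,\Phi)|_V$ is a \emph{monotone} net there, so that its convergence is governed purely by the total number of crossings with the vertical Maslov train, which in turn is controlled by $\ind^- Q|_{\Phi'_u}$. Fix $W=T_\lambda(T^*M)$, a $2n$-dimensional symplectic space, with vertical Lagrangian $\Pi=T_\lambda(T^*_qM)$. For every finite-dimensional $V\subset U$ the subspace $L|_V:=L_{(u,\lambda)}(\varphi,\Phi)|_V$ is a point of $L(W)$, which is a compact manifold; hence the net $\{L|_V\}$, indexed by the directed set of finite-dimensional subspaces ordered by inclusion, always has convergent subnets, and the theorem asserts it has a single limit point precisely when $\ind^- Q|_{\Phi'_u}<\infty$.

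First I would produce a generating-function description of $L|_V$. For $\delta q$ in the image of $\Phi'_u|_V$, let $c_V(\delta q)$ be the critical value of the quadratic form $\overline Q(\delta u)=\langle Q\delta u,\delta u\rangle$ on the affine space $\{\delta u\in V:\Phi'_u\delta u=\delta q\}$; the Lagrange condition for this constrained critical point is exactly $Q\delta u=\delta p\,\Phi'_u$ on $V$ with $\delta p=d c_V(\delta q)$. Thus $L|_V$ is the graph of $d c_V$ and, in a Darboux chart, is represented by a symmetric matrix $S_V=\Hess c_V$ (with the usual conventions when $\Phi'_u|_V$ fails to be onto, so that $L|_V$ only partially projects to the base). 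The monotonicity statement is then that enlarging $V$ enlarges the admissible set of the variational problem: on the subspace where $\overline Q$ is positive definite the critical values are genuine minima, so $V\subseteq V'$ forces $c_{V'}\le c_V$, i.e. $S_{V'}\preceq S_V$. Off that subspace $\overline Q$ has only $\ind^- Q|_{\Phi'_u}$ negative directions, so the family is monotone up to a correction of rank $\le \ind^- Q|_{\Phi'_u}$; equivalently, the velocity quadratic form $\underline{\dot\Lambda}$ of the curve $V\mapsto L|_V$ is sign-definite, so all crossings with any train occur with the same sign and cannot cancel.

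Next I would count crossings with $\cM_\Pi$. By the nullity proposition, $L|_V$ meets $\Pi$ nontrivially exactly when the Hessian $\overline Q|_{\ker\Phi'_u\cap V}$ is degenerate, and by the signature decomposition of Lemma~\ref{lem:important} together with the triple formula~(\ref{eq:maslov_formula}) the signed number of such crossings equals $\ind^-\overline Q|_{\ker\Phi'_u\cap V}$ up to an error bounded by $n=\dim M$ (coming from the bounded base contribution $\ind^- S_V\le n$). Letting $V\nearrow U$, the total Maslov index of the net is therefore finite if and only if $\sup_V \ind^-\overline Q|_{\ker\Phi'_u\cap V}=\ind^- Q|_{\Phi'_u}$ is finite. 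For the ``if'' direction, finitely many crossings means the monotone net eventually enters and stays in a single coordinate chart, where it is a decreasing net of symmetric matrices bounded below; such a net converges to its infimum, and monotonicity makes the limit independent of the cofinal family chosen, so $\fL_{(u,\lambda)}(\varphi,\Phi)$ exists. For the ``only if'' direction, if $\ind^- Q|_{\Phi'_u}=\infty$ then the net crosses $\cM_\Pi$ infinitely often with a single sign; a convergent net would eventually be confined to a chart, in which a monotone family of symmetric matrices crosses the degeneracy locus only finitely often (each eigenvalue passes a given value at most once), a contradiction.

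The main obstacle is the monotonicity step in full generality: when $u$ is a critical point of $\Phi$ the map $\Phi'_u|_V$ need not be surjective and $L|_V$ is only a partially-defined graph, while if $\overline Q$ is indefinite the ``critical value'' $c_V$ is not a minimum, so the clean inequality $c_{V'}\le c_V$ must be replaced by a correction of rank $\le\ind^- Q|_{\Phi'_u}$ and justified through the sign-definiteness of $\underline{\dot\Lambda}$ rather than by direct comparison of minima. Making this finite-rank bookkeeping precise—so that the equivalence ``finite total Maslov index $\iff$ $\ind^- Q|_{\Phi'_u}<\infty$'' survives the passage to the net over \emph{all} finite-dimensional $V$—is where the genuine linear-algebraic and analytic work lies; the compactness of $L(W)$ and the homotopy invariance of the Maslov index then deliver convergence essentially for free.
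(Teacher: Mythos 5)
The paper itself never proves this statement: it is imported verbatim from \cite{agr_feedback}, so there is no internal proof to compare against and your proposal must stand on its own. In outline it does follow the strategy of the cited machinery (control the net of prederivatives by relating increments of $\ind^-$ to the quantities $\Ind_\Pi$, as in the paper's Appendix), but two load-bearing steps are genuinely wrong, not merely unfinished.

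The first is the claimed two-sided relation ``signed crossings of $\cM_\Pi$ equal $\ind^-\overline{Q}|_{\ker\Phi'_u\cap V}$ up to an error bounded by $n$''. Only one inequality holds in general, namely the one recorded in the Appendix, $\ind^- Q|_{U^0}-\ind^- Q|_{U_1^0}\ \ge\ \Ind_\Pi\bigl(L(\varphi,\Phi)|_{U_1},L(\varphi,\Phi)|_{U}\bigr)$; upgrading it to an equality needs the extra hypotheses of Theorem~\ref{thm:formula} (in particular positivity of $Q$ on the added part of the kernel). Concretely, take $M=\R$, $U$ a Hilbert space, $\Phi(u)=\langle a,u\rangle$ with $a\ne 0$, $\varphi(u)=-\tfrac12\|u\|^2$, at $u=0$, $\lambda=0$, so $Q=-\mathrm{Id}$ and $\ind^-Q|_{\ker\Phi'_u}=\infty$. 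A direct computation gives $L|_V=\mathrm{span}\{(1,-\|P_Va\|^2)\}$, where $P_V$ is the orthogonal projection onto $V$: the net never touches $\cM_\Pi$ once $P_Va\ne0$, and since $\|P_Va\|^2$ increases to $\|a\|^2$ the net converges to $\mathrm{span}\{(1,-\|a\|^2)\}$. Infinitely many negative directions are added without producing a single crossing, so your ``only if'' argument (infinite index $\Rightarrow$ infinitely many same-sign crossings $\Rightarrow$ divergence) is unsound; the example even shows that the literal reading of the equivalence (with $\ind^-$ taken on $\ker\Phi'_u$) cannot be recovered by any crossing count, since here the limit exists while the index is infinite.

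The second gap is the monotonicity step, which is not ``bookkeeping'' but the actual content of the theorem. A net indexed by the directed set of finite-dimensional subspaces has no velocity, so ``sign-definiteness of $\underline{\dot\Lambda}$'' is not a meaningful surrogate for monotonicity. And genuine chart monotonicity $S_{V'}\preceq S_V$ cannot be what powers the proof, because a matrix-monotone net of Lagrangian graphs \emph{always} converges in the Grassmannian, with no index hypothesis at all: setting $T_V=S_{V_0}-S_V\succeq 0$ for $V\supseteq V_0$, the directions along which $\langle T_Vx,x\rangle$ stays bounded form a subspace by the Cauchy--Schwarz inequality for nonnegative forms, the forms converge there, and the remaining directions rotate into $\Delta$. (Your appeal to ``bounded below'' is both unjustified and unnecessary.) If your monotonicity held as stated, every problem would have a convergent net and the ``only if'' half would be contradicted. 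What can actually be salvaged is the ``if'' direction in the paper's own terms: the one-sided Appendix inequality telescopes along chains, so finiteness of $\ind^-Q|_{\ker\Phi'_u}$ bounds $\sup\sum\Ind_\Pi$ over all chains of subspaces, i.e.\ the net is monotone in the sense of Section~\ref{sec:monot}; one then still has to prove a convergence lemma for nets with finite total $\Ind_\Pi$-variation. That chain of reasoning---not crossing counting in a chart---is where the theorem lives, and it is precisely the part your write-up defers.
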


If the limit exists we call it the \emph{$L$-derivative} and denote it by the gothic symbol $\fL$ to distinguish it from the isotropic subspace that we would have got otherwise. The $L$-derivative constructed over some finite-dimensional subspace of the source space we will call a \emph{$L$-prederivative}. From here we also omit for brevity $(u,\lambda)$ in the notations. The following property allows to find efficient ways to compute $\fL(\varphi,\Phi)$.
\begin{theorem}
Suppose that $U$ is a topological vector space, s.t. $\varphi,\Phi$ are continuous on $U$ and $U_0 \subset U$ is a dense subspace. Then
$$
\fL_{(u,\lambda)}(\varphi,\Phi)|_{U_0} = \fL_{(u,\lambda)}(\varphi,\Phi)|_{U}
$$
\end{theorem}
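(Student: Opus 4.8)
The plan is to carry out the entire argument inside the fixed finite-dimensional Lagrangian Grassmannian $L(T_\lambda(T^*M))$, which is where both generalized limits live and which is compact. First I would record that $\fL|_{U_0}$ is even well defined. By the cited existence theorem $\fL|_U$ exists iff $\ind^- Q|_{\ker\Phi'_u}<\infty$; since $U_0\cap\ker\Phi'_u\subseteq\ker\Phi'_u$ one trivially has $\ind^- Q|_{\ker\Phi'_u\cap U_0}\le\ind^- Q|_{\ker\Phi'_u}$, and the reverse follows from the same density-plus-continuity argument used below (a maximal negative subspace of $Q|_{\ker\Phi'_u}$ is finite-dimensional and may be approximated within $U_0$). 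Hence the two indices agree and the $U_0$-limit exists exactly when the $U$-limit does. The goal is then to identify the two.

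The first main ingredient is \emph{monotonicity} of the prederivatives. As $V$ increases through finite-dimensional subspaces, the Lagrangian $L|_V$ moves monotonically: in a Darboux chart transversal to the vertical $\Pi$ it is represented by a symmetric matrix $S_V$, and the Schur-complement computation already visible in the one-dimensional example (where the slope is $\langle a,(Q|_V)^{-1}a\rangle$) shows that $V'\subseteq V$ forces a definite-sign relation $S_{V'}\preceq S_V$; intrinsically this is the statement that the velocity form $\underline{\dot L}$ has a fixed sign along the family. Consequently the generalized limit over a directed set of subspaces equals the supremum along any cofinal chain. Because every finite-dimensional subspace of $U_0$ is in particular a finite-dimensional subspace of $U$, the $U_0$-net sits inside the $U$-net in this monotone order, which immediately gives one inequality, $\fL|_{U_0}\preceq\fL|_U$.

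The reverse inequality is where density enters, through continuity. I would first establish that $V\mapsto L|_V$ is continuous in the following sense: if $v_1',\dots,v_k'\to v_1,\dots,v_k$ in the topology of $U$, then the spans $V'=\mathrm{span}(v_i')$ satisfy $L|_{V'}\to L|_V$ in $L(T_\lambda(T^*M))$. This holds because $L|_{V'}$ is obtained by solving the finite linear system built from the restricted data $Q(v_i',v_j')$ and $\Phi'_u v_i'$, and these depend continuously on the $v_i'$ precisely because $\varphi$ and $\Phi$ are continuous on $U$, so that $Q$ is a jointly continuous bilinear form and $\Phi'_u$ a continuous linear map. Granting this, fix any finite-dimensional $V\subset U$ and a basis; density of $U_0$ furnishes $v_i'\in U_0$ arbitrarily close to the basis vectors, hence a finite-dimensional $V'\subset U_0$ with $L|_{V'}$ arbitrarily close to $L|_V$. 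Since $V'\subset U_0$ gives $L|_{V'}\preceq\fL|_{U_0}$, letting $V'\to V$ and invoking continuity yields $L|_V\preceq\fL|_{U_0}$ for every finite-dimensional $V\subset U$; taking the supremum over such $V$ gives $\fL|_U\preceq\fL|_{U_0}$, and combined with the previous paragraph this proves $\fL|_{U_0}=\fL|_U$.

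The hard part will be the continuity step, and it is the only place the standing hypothesis is genuinely used: one must know that the second-variation data $(Q,\Phi'_u)$, a priori defined only formally by differentiating $\varphi$ and $\Phi$, extend to honestly continuous objects on $U$ in its topology, which is the precise content of the continuity assumption. A secondary technical nuisance is that $L|_V$ need not be transversal to the particular vertical used to set up the chart; this is handled either by passing to the universal cover of $L(W)$, where the monotone order becomes a genuine total order, or by observing that the monotonicity and continuity statements are chart-independent and choosing, for each comparison, a transversal Lagrangian, so that the supremum and limit identifications remain valid. Once these two points are in place, the monotonicity-plus-density scheme closes the argument with no further computation.
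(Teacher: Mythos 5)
The paper itself states this theorem without proof (it is imported from the cited reference \cite{agr_feedback}), so there is no in-paper argument to compare against; judging your proposal on its own merits, both of its load-bearing lemmas are false as stated, and in exactly the regime the theorem is designed for.

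First, the monotonicity claim. You assert that $V'\subseteq V$ forces $S_{V'}\preceq S_V$ in a chart, citing the Schur-complement formula $\langle a,(Q|_V)^{-1}a\rangle$; but that maximization argument ($\langle a,Q_V^{-1}a\rangle=\max_{v\in V}(2\langle a,v\rangle-\langle Qv,v\rangle)$) is valid only when $Q$ is positive definite, and the whole point of the $L$-derivative machinery is indefinite $Q$. Concretely, in the paper's own one-dimensional example take $U=\R^2$, $Q=\mathrm{diag}(1,-1)$, $a=\Phi'_u=(1,1)$. Then $L|_{\{0\}}=\{(\delta p,0)\}=\Pi$, $L|_{\mathrm{span}(e_1)}=\{(\delta p,\delta p)\}$, and $L|_{\R^2}=\Pi$ again (since $Q^{-1}a=(1,-1)$ is $\sigma$-orthogonal to $a$). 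Along the chain $\{0\}\subset\mathrm{span}(e_1)\subset\R^2$ the prederivative leaves $\Pi$ and returns to it, so no chart order can make the net monotone --- and note this happens even though $\ind^- Q|_{\ker\Phi'_u}=0$ here. The correct substitute is the index-increment inequality stated in the paper's appendix, $\ind^- Q|_{U^0}-\ind^- Q|_{U_1^0}\geq \Ind_\Pi\bigl(L(\varphi,\Phi)|_{U_1},L(\varphi,\Phi)|_{U}\bigr)$: monotonicity (vanishing of $\Ind_\Pi$-increments) holds only \emph{cofinally}, for subspaces that already contain a maximal negative subspace of $Q|_{\ker\Phi'_u}$, so any supremum characterization must be based there; your unbased inequality $\fL|_{U_0}\preceq\fL|_U$ has no meaning. (Your fallback of lifting to the universal cover also does not apply: the prederivatives form a net indexed by a directed set, not a path, and a coherent lift is exactly the datum in question.)

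Second, the continuity claim. The map $(v_1,\dots,v_k)\mapsto L|_{\mathrm{span}(v_i)}$ is \emph{not} continuous, even when $Q$ and $\Phi'_u$ are: $L|_V$ is the projected solution set of a linear system with coefficients $\langle Qv_i,v_j\rangle$ and $\Phi'_u v_i$, and solution sets jump when the coefficient matrix degenerates in the limit. Take $U=\R^3$, $Q=\mathrm{diag}(1,-1,0)$, $a=(0,0,1)$, and $v_\epsilon=e_1+e_2+\epsilon e_3$. For every $\epsilon\neq 0$ the condition $\langle Qv_\epsilon,v_\epsilon\rangle x=\delta p\,\langle a,v_\epsilon\rangle$ reads $0=\delta p\,\epsilon$, giving $L|_{\mathrm{span}(v_\epsilon)}=\{(0,\delta q)\}$, whereas at $\epsilon=0$ one gets $L|_{\mathrm{span}(v_0)}=\{(\delta p,0)\}=\Pi$: the limit of the prederivatives is the horizontal subspace, not the vertical one. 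Hence approximating a basis of an arbitrary $V\subset U$ by vectors of $U_0$ does not force $L|_{V'}\to L|_V$, and your key step ``$L|_V\preceq\fL|_{U_0}$ for every finite-dimensional $V\subset U$'' collapses. These failures are non-generic, but you cannot appeal to general position: the perturbation must land in the prescribed dense subspace $U_0$ and approximate a prescribed $V$. A correct argument along your general lines instead interlaces the two nets and controls them through the index inequality above (with semicontinuity of $\ind^-$ under perturbation of $V$, which is where continuity of $Q$ and $\Phi'_u$ actually enters), rather than through pointwise convergence of prederivatives.
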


One can use this theorem in two different directions. Given a topology on $U_0$ one can look for a weaker topology on $U_0$, s.t. $\varphi$ and $\Phi$ are continuous in that topology. Then we extend $U_0$ to $U$ by completion. This trick was previously used in~\cite{agr_feedback}.

Another way is to take a smaller subspace. For example, if $U$ is separable, then we can take a dense countable subset $e_1,e_2,...$ and compute the limit as
$$
\lim_{n\to\infty}L(\varphi,\Phi)|_{span\{e_1,...,e_n\}} = \fL(\varphi,\Phi).
$$

This allows to see how the $L$-derivative changes as we add variations. Under some additional assumptions we can also compute the change in the Maslov index after adding additional subspaces (see the Appendix).

\subsection{Monotonicity}
\label{sec:monot}

To discuss the Morse theorem in the general setting we need a very useful notion of monotonicity. Assume that the curve $\gamma_t$ is contained in a chart $\Delta^\pitchfork$, then one can associate a one parametric family of quadratic forms $S_t$, where $\gamma_t=\{(p,S_tp):p\in\mathbb R^n\},\ \Delta=\{(0,q):q\in\mathbb R^n\}$. We say that $\gamma_t$ is \emph{increasing} if $S_t$ is increasing, i.e. $S_t - S_\tau$ is positive definite, when $t > \tau$. It is important that the property of a smooth curve to be increasing does not depend on the choice of a coordinate chart. Indeed, the quadratic form $p\mapsto\langle\dot S_tp,p\rangle$ is equivalent by a linear change of variables to the form $\underline{\dot\gamma}_t$ defined in Section~\ref{sec:lagr_gr}, and the definition of $\underline{\dot\gamma}_t$ is intrinsic. It does not use local coordinates.

Moreover, if $\gamma_t,\ t_0\le t\le t_1$ is simple and increasing, then Maslov index $\gamma \circ \cM_{\Lambda}$ depends only on $\gamma_{t_0},\gamma_{t_1},\Lambda$ and can be explicitly expressed via the Maslov index of this triple. More precisely, assume that $\gamma_{t_0},\gamma_{t_1},\Lambda$ are mutually transversal and let $\tilde q$ be a quadratic form on $\lambda$ defined by the formula: $\tilde q(\lambda)=\sigma(\lambda_1,\lambda_0),\ \lambda\in\Lambda$, where $\lambda_0\in\gamma_{t_0},\lambda_1\in\gamma_{t_1},\lambda=\lambda_0+\lambda_1$  Actually if we define
$$
\Ind_{\Lambda}(\gamma_{t_0},\gamma_{t_1}) = \ind^- \tilde{q},
$$
then one can show~\cite{agrachev} that
$$
\gamma \circ \cM_{\Lambda} = \Ind_{\Lambda}(\gamma_{t_0},\gamma_{t_1}).
$$
A corollary of this fact is the following triangle inequality:
\begin{proposition}
Let $\Pi, \Lambda_i$, $i=0,1,2$ be Lagrangian subspaces in $L(\Sigma)$. Then
$$
\Ind_{\Pi}(\Lambda_0,\Lambda_2) \leq \Ind_{\Pi}(\Lambda_0,\Lambda_1) + \Ind_{\Pi}(\Lambda_1,\Lambda_2)
$$
\end{proposition}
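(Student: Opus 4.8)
The plan is to turn $\Ind_\Pi$ into the negative index of an explicit symmetric matrix and then invoke a subadditivity property of $\ind^-$ under sums. I assume the nondegenerate situation in which $\Lambda_0,\Lambda_1,\Lambda_2$ are mutually transversal and each is transversal to $\Pi$, so that all three forms $\tilde q_{ij}$, and hence $\Ind_\Pi(\Lambda_i,\Lambda_j)$, are defined; the general case is recovered at the end by perturbing $\Pi$ and using lower semicontinuity of $\ind^-$.

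First I would fix Darboux coordinates $(p,q)$ adapted to $\Pi$, i.e. with $\Pi=\{(0,q)\}$. Transversality to $\Pi$ lets me write each $\Lambda_i$ as a graph $\{(p,S_ip)\}$ of a symmetric matrix $S_i$, and $\Lambda_i\pitchfork\Lambda_j$ is precisely invertibility of $S_j-S_i$. Unwinding the definition of $\tilde q_{ij}$ on $\Pi$: for $\pi=(0,q)$ the decomposition $\pi=\mu_i+\mu_j$ with $\mu_i=(a,S_ia)\in\Lambda_i$ and $\mu_j=(-a,-S_ja)\in\Lambda_j$ forces $(S_i-S_j)a=q$, and then
$$
\tilde q_{ij}(\pi)=\sigma(\mu_j,\mu_i)=a^T(S_j-S_i)a=q^T(S_j-S_i)^{-1}q.
$$
Because a symmetric matrix and its inverse have equal signature, I obtain the clean formula $\Ind_\Pi(\Lambda_i,\Lambda_j)=\ind^-(S_j-S_i)$.

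With this in hand the proposition reads $\ind^-(S_2-S_0)\le\ind^-(S_1-S_0)+\ind^-(S_2-S_1)$. Setting $B=S_1-S_0$ and $A=S_2-S_1$, the identity $S_2-S_0=A+B$ reduces everything to the linear-algebra inequality $\ind^-(A+B)\le\ind^-(A)+\ind^-(B)$ for symmetric forms on an $n$-dimensional space. I would prove this by choosing a maximal subspace $W_A$ on which $A$ is positive semidefinite, so that $\dim W_A=n-\ind^-(A)$, and likewise $W_B$ with $\dim W_B=n-\ind^-(B)$. On $W_A\cap W_B$ one has $A+B\ge 0$, hence $\ind^-(A+B)\le n-\dim(W_A\cap W_B)$; combined with $\dim(W_A\cap W_B)\ge \dim W_A+\dim W_B-n$ this yields exactly $\ind^-(A+B)\le\ind^-(A)+\ind^-(B)$.

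The step I expect to require the most care is not any individual computation but the bookkeeping in the reduction: the three intrinsic forms $\tilde q_{01},\tilde q_{12},\tilde q_{02}$ all live on $\Pi$ yet are identified with forms on the horizontal space through three \emph{different} isomorphisms $q\mapsto (S_i-S_j)^{-1}q$, so one must commit to a single coordinate frame and check that the indices are genuinely the $\ind^-$ of the additive matrices $S_1-S_0,\,S_2-S_1,\,S_2-S_0$ before the telescoping $S_2-S_0=(S_2-S_1)+(S_1-S_0)$ can be used. Once this is arranged the subadditivity lemma closes the argument, and the passage from the transversal case to arbitrary $\Lambda_i,\Pi$ is a routine limiting argument moving $\Pi$ to a nearby transversal plane. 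It is worth noting that the same inequality has a transparent geometric reading in the language of Section~\ref{sec:monot}: concatenating increasing curves $\Lambda_0\to\Lambda_1$ and $\Lambda_1\to\Lambda_2$ produces an increasing (generally non-simple) curve from $\Lambda_0$ to $\Lambda_2$ whose crossings with $\cM_\Pi$, all positive, cannot be fewer than those of the simple increasing curve computing $\Ind_\Pi(\Lambda_0,\Lambda_2)$.
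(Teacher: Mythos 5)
Your argument is correct in the mutually transversal case, and it takes a genuinely different route from the paper. You work entirely inside one chart: writing $\Lambda_i=\{(p,S_ip)\}$ over a fixed complement of $\Pi$, you identify $\Ind_\Pi(\Lambda_i,\Lambda_j)$ with $\ind^-(S_j-S_i)$ (the computation $\tilde q_{ij}(\pi)=q^T(S_j-S_i)^{-1}q$ and the invariance of signature under inversion are both right), after which the proposition collapses to the subadditivity $\ind^-(A+B)\le\ind^-(A)+\ind^-(B)$, which you prove correctly by intersecting maximal positive-semidefinite subspaces. The paper argues topologically instead: it closes the three planes into a loop by simple monotone curves, uses $\Ind_\Pi(\Lambda_2,\Lambda_0)=n-\Ind_\Pi(\Lambda_0,\Lambda_2)$ to restate the claim as ``the loop crosses $\cM_\Pi$ at least $n$ times,'' and proves this by switching to a train $\cM_\Delta$ adapted to $\Lambda_0,\Lambda_1$, invoking independence of the closed-curve Maslov index from the choice of train. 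Your route is more elementary and self-contained: it needs neither the identity $\gamma\circ\cM_\Lambda=\Ind_\Lambda(\gamma_{t_0},\gamma_{t_1})$ for simple increasing curves (which the paper imports from the literature) nor homotopy invariance. What it loses is flexibility: every step is tied to pairwise transversality of all four planes. Note also that your closing geometric remark is essentially the paper's proof, but stated too quickly: the concatenated curve and the direct simple increasing curve differ by a \emph{non-monotone} loop (one piece is traversed backwards), so ``cannot be fewer crossings'' is not obvious; making it rigorous is precisely where the paper needs the change-of-train trick.

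One caveat: your reduction of the general case to the transversal one by ``perturbing $\Pi$ and using lower semicontinuity of $\ind^-$'' does not work as stated. Lower semicontinuity bounds the left-hand side the right way, $\Ind_\Pi(\Lambda_0,\Lambda_2)\le\liminf$ of the perturbed values, but for the right-hand side you need the perturbed indices not to exceed the degenerate ones, i.e.\ upper semicontinuity — and $\ind^-$ of the forms $\tilde q_{ij}$ can jump up under an arbitrary perturbation exactly when intersections are nontrivial. So the perturbation must be chosen with a definite (monotone) sign, or one must fix a definition of $\Ind_\Pi$ for non-transversal pairs and check compatibility. In fairness, the paper glosses over the same point: its own proof also assumes the transversality needed for $\Ind_\Pi(\Lambda_2,\Lambda_0)=n-\Ind_\Pi(\Lambda_0,\Lambda_2)$, so this gap is shared rather than specific to your approach.
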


\begin{proof}

We consecutively connect $\Lambda_0$ with $\Lambda_1$, $\Lambda_2$ with $\Lambda_3$, and $\Lambda_3$ with $\Lambda_0$ by simple monotone curves that gives us a closed curve $\gamma$. Then we have
$$
\gamma \circ \cM_\Pi = \Ind_\Pi(\Lambda_0,\Lambda_1) + \Ind_\Pi(\Lambda_1,\Lambda_2) + \Ind_\Pi(\Lambda_2,\Lambda_0).
$$
From the definition of $\Ind$, one has
$$
\Ind_\Pi(\Lambda_2,\Lambda_0) = n - \Ind_\Pi(\Lambda_0,\Lambda_2).
$$
So it is enough to show that $\gamma\circ \cM_\Pi \geq n$. This again follows from the fact that the intersection index of a closed curve does not depend on the choice of $\Pi$.
Recall that the group of symplectic transformations acts transitively on the set of pairs of transversal Lagrangian planes. Hence we can find $\Delta \in L(\Sigma)$ s.t. such that $\Lambda_0$ and $\Lambda_1$ belong to the coordinate chart
$\Delta^\pitchfork$ and, moreover, $\Lambda_0$ is represented by a negative definite symmetric matrix in this chart while  $\Lambda_1 $ is represented by a positive definite symmetric matrix. Then
$$
\gamma \circ \cM_\Pi = \gamma \circ \cM_\Delta = \Ind_\Delta(\Lambda_0,\Lambda_1) + \Ind_\Delta(\Lambda_1,\Lambda_2) + \Ind_\Delta(\Lambda_2,\Lambda_0) \geq n
$$
since by definition $\Ind_\Delta(\Lambda_0,\Lambda_1) = n$ and $\Ind_\Delta(\Lambda_i,\Lambda_j)\geq 0$.
\end{proof}

So if we take a curve $\gamma(t) \in L(\Sigma)$ and it's subdivision at moments of time $0 = t_0 < t_1 < ... < t_N = 1$, we can consider the sum
$$
\sum_{i=0}^{N-1} \Ind_\Pi(\gamma(t_{i}),\gamma(t_{i+1}) )
$$
which grows as the partition gets finer and finer. For monotone increasing curves this sum will stabilize and will be equal to a finite number. This motivates the next definition. A (maybe discontinuous) curve $\gamma: [0,1]\to L(\Sigma)$ is called \emph{monotone increasing} if
$$
\sup_D \sum_{t_i \in D} \Ind_\Pi(\gamma(t_{i}),\gamma(t_{i+1}) ) < \infty,
$$
where the supremum is taken over all possible finite partitions $D$ of the interval $[0,1]$.

Monotone curves have properties similar to monotone functions. For example, they have only jump discontinuous and are almost everywhere differentiable.

It is instructive to see how monotonicity works on curves in $L(\R^2)$. The Lagrangian Grassmanian $L(\R^2)$ topologically is just an oriented circle and a curve is monotone increasing if it runs in the counter-clockwise direction. A coordinate chart is the circle with a removed point. On the left picture of Figure~\ref{fig5} the blue curve is monotone and it's index Maslov is equal to zero. On the right the black curve is not monotone increasing. Indeed if we take any two points the Maslov index of a triple $(\Lambda_{t_{i}},\Pi,\Lambda_{t_{i+1}})$ will be equal to one (the red curve).

\begin{figure}
\begin{center}
\includegraphics[scale=1]{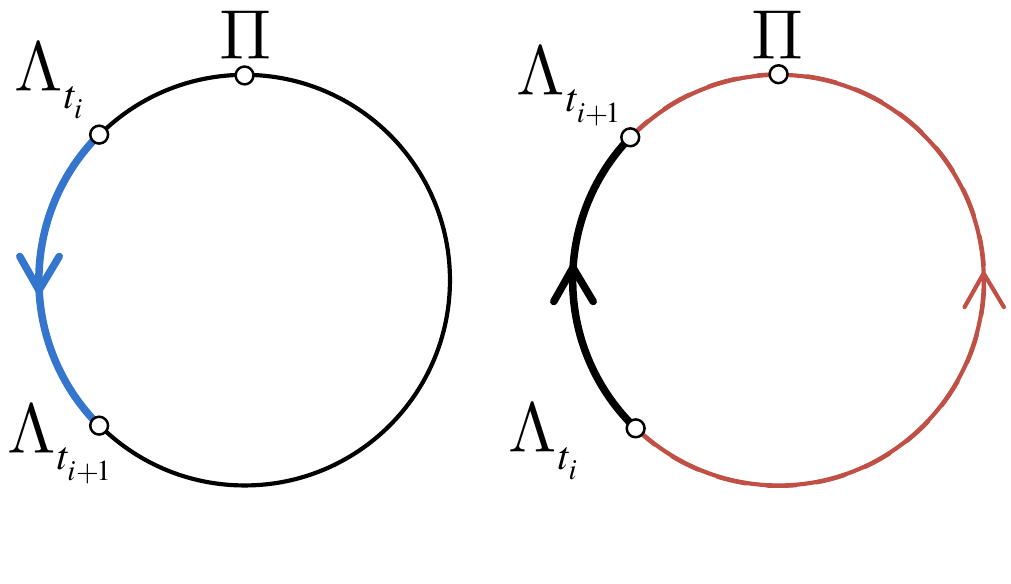}
\end{center}
\caption{Monotone increasing blue curve and monotone decreasing black curve with the same end-points}
\label{fig5}
\end{figure}

\subsection{$L$-derivative for optimal control problems with a free control}
\label{sec:l-deriv}
In this subsection we consider an optimal control problem with fixed end-points\footnote{We do not study free endpoint problems here: see recent paper \cite{Fra} and references therein for 2nd order optimality conditions in the free endpoint case.}. 
\begin{equation}
\label{eq:control}
\dot{q} = f(q,u(t)), \qquad q \in M, \qquad u\in \R^k,
\end{equation}
$$
q(0) = q_0, \qquad q(t_1) = q_1,
$$
$$
J^{t_1}_{0} = \int_{0}^{t_1} l(q(s),u(s))ds
$$

We may assume, that time moment $t_1$ is fixed. Otherwise, we can make a time scaling $s = \alpha\tau$, where $\alpha > 0$ is a constant that will be used as an additional control function. If we denote by $\hat{u}(\tau) = u(\alpha\tau)$ and $\tau_1 = t_1/\alpha$, then we get an equivalent optimal control problem
$$
\dot{q} = \alpha f(q,\hat u(\tau)), \qquad (\alpha,\hat{u}(\tau))\in \R \times L^2_k[0,\tau_1],
$$
$$
q(0) = q_0, \qquad q(\tau_1) = q_1,
$$
$$
\hat J^{\tau_1}_{0} = \alpha \int_{0}^{\tau_1} l(q(\tau),\hat u(\tau))d\tau
$$
with fixed time $\tau_1$. We see that by variating $\alpha$ we variate the final time $t_1$, so we can assume from the beginning that $t_1$ is fixed.

A curve $q(t)$ that satisfies (\ref{eq:control}) for some locally bounded measurable function $u(t)$ is called an \emph{admissible trajectory}. As in the case of the classical calculus of variations we can define the evaluation map $F_t$ that takes an admissible curve and maps it to the corresponding point at a moment of time $t$. We easily recover the Lagrange multiplier rule (\ref{eq:Lag})
\begin{equation}
\label{eq:nu}
\nu d_\gamma J_{0}^t = \lambda_t D_\gamma F_t - \lambda_{0} D_\gamma F_{0}.
\end{equation}
where $\nu$ can be normalized in such a way that it takes value $1$ or $0$. If $\nu = 1$, we call the corresponding extremal \emph{normal}, otherwise it is called \emph{abnormal}. In the case of calculus of variations one has only normal extremals because $(F_t,F_{0})$ is a submersion.

We can derive the Hamiltonian system also in this case. Locally we assume that $\lambda_t = (p_t,q_t)$. Then
$$
\nu\int_{0}^t\left( \frac{\p l}{\p q} dq_s + \frac{\p l}{\p u} du_s \right)ds = p_t dq_t - p_{0} dq_{0}.
$$
We differentiate this expression w.r.t. time $t$:
$$
\nu\frac{\p l}{\p q} dq_t + \nu\frac{\p l}{\p u} du_t = \dot p_t dq_t + p_t df,
$$
where $df=\frac{\p f}{\p q}dq_t+\frac{\p f}{\p u}du_t$.
Now we collect terms and obtain:

$$
\frac{\p }{\p u}  \left( \langle p_t, f(q_t,u)\rangle - \nu l(q_t,u) \right) = 0,
$$
$$
\dot p_t = \frac{\p}{\p q_t} \left( \nu l(q_t,u) - \langle p_t, f(q_t,u)\rangle\right),
$$
$$
\dot q= f(q,u).
$$
Thus if we set
$$
H(p,q,u) = \langle p, f(q,u)\rangle - \nu l(q,u)
$$
we see that the equations above are equivalent to a Hamiltonian system, where the Hamiltonian satisfies
$$
\frac{\p H}{\p u} = 0.
$$
We can rewrite these equations in the coordinate free form:
\begin{equation}
\label{eq:hamilton}
\dot\lambda_t=\vec H(\lambda_t,u), \qquad \frac{\p H}{\p u}=0.
\end{equation}

Now fix $q_0\in M$; then $F_{0}^{-1}(q_0)$ is just the space of control functions $u(\cdot)$. Let $E_t:u(\cdot)\mapsto q(t)$ be the {\it endpoint map}, $E_t=F_t\bigr|_{F_{0}^{-1}(q_0)}$. It is easy to see that the relation (\ref{eq:nu}) is equivalent to the relation
$$
\nu d_uJ_{0}^t=\lambda_tD_uE_t.
$$
Let $\tilde u(\cdot)$ satisfy this relation, i.e. there exist $\lambda_t,\ 0\le t\le t_1$, such that (\ref{eq:hamilton}) is satisfied with $u = \tilde{u}(t)$.
We are going to compute the $L$-derivative $\fL_{(\tilde u,\lambda_t)}(\nu J_{0}^t,E_t)$. Let
$P_{t}:M\to M$ be the flow generated by the differential equation $\dot q=f(q,\tilde u(t)),\ P_{0}=Id$. We set $G_t=P_{t}^{-1}\circ E_t$. The map $G_t$ is obtained from $E_t$ by a ``change of variables'' in $M$.
Intrinsic nature of the $L$-derivative now implies that
$$
\fL_{(\tilde u,\lambda_t)}(\nu J_{0}^t,G_t) = P_{t}^* \left( \fL_{(\tilde u,\lambda_{0})}(\nu J_{0}^t,E_t)\right)
$$
and we may focus on the computation of the $L$-derivatives
$\fL_{(\tilde u,\lambda_{0})}(\nu J_{0}^t,G_t)\subset T_{\lambda_{0}}(T^*M),\ 0\le t\le t_1$ which is more convenient, since all the $L$-derivatives lie in the same symplectic space $T_{\lambda_{0}}(T^*M)$ and this way we don't need a connection or a homotopy argument to compute the Maslov index.

To find $G_t$ we apply a time-dependent change of variables
$$
y = P_{t}^{-1}(q).
$$
It is easy to see that we get an equivalent control system
$$
\dot{y} = (P_{t}^{-1})_* \left( f(u,\cdot) - f(\tilde{u}(t),\cdot) \right)(y) = g(t,u,y),
$$
where in the center we have the pull-back of vector fields. Then the definitions give us
$$
y_0= q_0, \qquad g(\tilde{u}(t),y) = 0.
$$
Similarly in the functional we define $\psi(t,u,y) = l(P_{t}(y),u) $.

To write down an explicit expression for  $\fL_{(\tilde u,\lambda_{0})}(\nu J_{0}^t,G_t)$ we must characterize first all the critical points and Lagrange multipliers in these new coordinates. We apply the Lagrange multiplier rule exactly as above and find, that if $(y(t),u(t))$ is a critical point, then there exists a curve of covectors $\mu_t \in T_{y(t)}^*M$, that satisfies a Hamiltonian system
\begin{equation}
\label{eq:hamilt}
\dot \mu_t = \vec{h} (t,u,\mu_t),
\end{equation}
where the Hamiltonian
$$
h(t,u,\mu_t) = \langle \mu_t, g(t,u,y) \rangle - \nu \psi(t,u,y)
$$
satisfies also
\begin{equation}
\label{eq:maxim}
\frac{\p h(t,u,\mu_t)}{\p u} = 0.
\end{equation}
Note that for the referenced critical point $(y_0,\tilde{u}(t))$, the corresponding curve of covectors is simply $\mu_t = \lambda_0$ and $h(t,\tilde u(t),\lambda_0) = 0$.

Recall that the $L$-derivative is obtained by linearising the relation for the Lagrange multiplier rule. But the Lagrange multiplier rule is equivalent to this weak version of the Pontryagin maximum principle, so it is enough to linearise (\ref{eq:hamilt}) and (\ref{eq:maxim}) at $\mu_t = \lambda_0$ and $u(t) = \tilde{u}(t)$. Since $\vec{h}(t,\tilde{u},\lambda_0)= 0$, linearisation of (\ref{eq:hamilt}) gives
$$
\dot{\eta}_t = \left.\frac{\p \vec{h}(t,u,\lambda_0)}{\p u}\right|_{u=\tilde{u}} v_\tau \qquad \iff \qquad \eta_t = \eta_0 + \int_0^t X_\tau v_\tau d\tau, \qquad X_\tau :=   \left.\frac{\p \vec{h}(t,u,\lambda_0)}{\p u}\right|_{u=\tilde{u}}.
$$
Similarly we can linearise the equation (\ref{eq:maxim}) and use the definition of a Hamiltonian vector field to derive:
$$
\left.\frac{\p^2 h(t,u,\lambda_0)}{\p u^2}\right|_{u=\tilde{u}}(v(t),\cdot) + \left\langle d_{\mu_\tau} \left.\frac{\p \vec{h}(t,u,\lambda_0)}{\p u}\right|_{u=\tilde{u}} \cdot , \eta_t \right\rangle = 0  \quad \iff \quad
b_t(v(t),\cdot) + \sigma \left( \eta_t, X_t \cdot \right) = 0,
$$
where $b_t$ is the second derivative of $h(t,u,\lambda_0)$ w.r.t. $u$ at $u = \tilde{u}$. Combining this two expressions we find that a $L$-prederivative is defined as
\begin{align*}
L(\nu J_0^t, E_t)|_V = (P_0^t)_* &\left\{\eta_t = \eta_0 + \int_0^t X_\tau v_\tau d\tau: \eta_0 \in T_{\lambda_0} (T^*_{q_0} M), v_\tau \in V: \right. \\
&\left. \int_0^t \sigma \left( \eta_\tau, X_\tau w_\tau \right) + b_\tau(v_\tau,w_\tau)d\tau = 0 , \forall w_\tau \in V\right\}
\end{align*}

From this description and the definition of the $L$-derivative as a limit of $L$-prederivatives we can deduce an interesting property that can be successfully used to construct approximations for the former. This property says that if we know a $L$-derivative at a moment of time $s_1\in(0,t_1)$ that we denote by $\fL_1$, then the $L$-derivative at a moment of time $s_1 < s_2 \leq t_1$ can be computed using the vectors from $\fL_1$ and variations with support in $[s_1,s_2]$. A precise statement is the following
\begin{lemma}
\label{lemm:add}
Take $0 < s_1  < s_2$ and suppose that $\ind^- Q|_{\ker D_{\tilde{u}}E_t}$ is finite along an extremal curve defined on $[0,s_2]$. Let $\fL_1$ and $\fL_2$ be the two $L$-derivatives for the times $s_1$ and $s_2$ correspondingly. We denote by $V_2$ some finite dimensional subspace of $L^2_k[s_1,s_2]$ and we consider the following equation
\begin{equation}
\label{eq:master_short}
\int_{s_1}^{s_2} \left[ \sigma \left( \lambda + \int_{s_1}^{\tau} X_\theta v_2(\theta) d\theta, X_\tau  w(\tau) \right) + b_\tau\left( v_2(\tau),  w(\tau)\right) \right] d\tau = 0, \qquad \forall w(\tau) \in V_2,
\end{equation}
where $v_2(\tau) \in V_2$, and $\lambda \in \fL_1$. Then $\fL_{2}$ is a generalized limit of the Lagrangian subspaces $L_1^2[V_2]$ defined as
$$
L_1^2[V_2] = \left\{
\lambda + \int_{s_1}^{s_2} X_\tau v(\tau) d\tau : \lambda \in \fL_1 \;,\; v(\tau)\in V_2 \textit{ satisfies (\ref{eq:master_short}) for any } w(\tau)\in V_2 \right\}.
$$
\end{lemma}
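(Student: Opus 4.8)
The plan is to reduce the statement to a clean decomposition of the $L$-prederivatives of the time-$s_2$ problem along the splitting of the control space, and then to pass to the limit. I work throughout in the coordinates of Section~\ref{sec:l-deriv}, so that all the $L$-derivatives live in the single fixed symplectic space $T_{\lambda_0}(T^*M)$ and the prederivatives are given there by the explicit formula in terms of $X_\tau$ and $b_\tau$; this removes any need for a connection or a homotopy argument when comparing the two times $s_1$ and $s_2$. The first step is to exploit the orthogonal splitting
$$
L^2_k[0,s_2] = L^2_k[0,s_1] \oplus L^2_k[s_1,s_2],
$$
controls being extended by zero. Subspaces of the form $V_1\oplus V_2$ with $V_1\subset L^2_k[0,s_1]$ and $V_2\subset L^2_k[s_1,s_2]$ finite-dimensional are cofinal in the directed set of all finite-dimensional subspaces of $L^2_k[0,s_2]$, so the generalized limit defining $\fL_2$ may be computed along such $V_1\oplus V_2$.

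Next I would insert $V=V_1\oplus V_2$ into the explicit prederivative formula of Section~\ref{sec:l-deriv} and split it. Write a variation as $v=v_1+v_2$ and a test function as $w=w_1+w_2$ with the evident supports, and set $\lambda := \eta_{s_1}=\eta_0+\int_0^{s_1}X_\tau v_1(\tau)\,d\tau$. Since variations with disjoint supports decouple, the symmetric form $b_\tau(v_\tau,w_\tau)$ and the term $\sigma(\eta_\tau,X_\tau w_\tau)$ contribute on $[0,s_1]$ only through $(v_1,w_1)$ and on $[s_1,s_2]$ only through $(v_2,w_2)$. Testing against $w_1\in V_1$ alone and against $w_2\in V_2$ alone, the single defining constraint ``for all $w\in V$'' becomes equivalent to the conjunction of two independent conditions: the defining constraint of the time-$s_1$ problem over $V_1$ (which, together with the definition of $\lambda$, says exactly $\lambda\in L(\nu J_0^{s_1},E_{s_1})|_{V_1}$), and precisely equation~(\ref{eq:master_short}) for the pair $(\lambda,v_2)$ on $[s_1,s_2]$. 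As $\eta_{s_2}=\lambda+\int_{s_1}^{s_2}X_\tau v_2(\tau)\,d\tau$, this yields the prederivative identity
$$
L(\nu J_0^{s_2},E_{s_2})|_{V_1\oplus V_2} = \left\{\lambda+\int_{s_1}^{s_2}X_\tau v(\tau)\,d\tau \ :\ \lambda\in L(\nu J_0^{s_1},E_{s_1})|_{V_1},\ v\in V_2 \text{ satisfies }(\ref{eq:master_short})\right\},
$$
which is exactly the construction of $L_1^2[V_2]$, but with $\fL_1$ replaced by its $V_1$-prederivative.

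Finally I would pass to the limit in two stages. For a fixed finite-dimensional $V_2$, equation~(\ref{eq:master_short}) is a finite linear system in $v\in V_2$ whose coefficients depend linearly, hence continuously, on $\lambda$; because $V_2$ is finite-dimensional, the assignment $\lambda\mapsto\eta_{s_2}$ is a continuous linear-algebraic operation, so letting $V_1\nearrow L^2_k[0,s_1]$ and using $L(\nu J_0^{s_1},E_{s_1})|_{V_1}\to\fL_1$ turns the right-hand side into precisely $L_1^2[V_2]$. It then remains to take $V_2\nearrow L^2_k[s_1,s_2]$ and to recognize the limit as $\fL_2$. The hard part is justifying this interchange of limits, i.e. that the iterated limit coincides with the joint limit over $V_1\oplus V_2$. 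Here I would invoke the monotonicity built into the construction---enlarging the space of variations moves the prederivatives monotonically in the sense of Section~\ref{sec:monot}---together with the hypothesis $\ind^- Q|_{\ker D_{\tilde u}E_t}<\infty$ on $[0,s_2]$, which through the existence theorem for the $L$-derivative guarantees that $\fL_1$, $\fL_2$ and all the intermediate limits exist and eventually lie in a common coordinate chart. Monotone convergence inside a fixed chart is insensitive to the order of the limits, so the iterated limit equals the joint limit, and the joint limit is $\fL_2$ by the cofinality established in the first step.
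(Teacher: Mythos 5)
Your set-up and your first two steps are sound, and they are exactly the deduction the paper gestures at (the paper gives no proof of this lemma beyond the remark that it follows ``from this description and the definition of the $L$-derivative as a limit of $L$-prederivatives''). Split subspaces $V_1\oplus V_2$ are indeed cofinal among finite-dimensional subspaces of $L^2_k[0,s_2]$, and inserting such a subspace into the prederivative formula and testing separately against $w_1\in V_1$ and $w_2\in V_2$ does decouple the constraint into the time-$s_1$ prederivative condition for $\lambda=\eta_{s_1}$ and equation~(\ref{eq:master_short}) for $(\lambda,v_2)$; your identity for $L(\nu J_0^{s_2},E_{s_2})|_{V_1\oplus V_2}$ is correct.

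The gap is in the last step, and it sits exactly where the whole content of the lemma lies. Denote by $T_{V_2}$ the operation that sends a Lagrangian subspace $\Lambda$ to the set $L_1^2[V_2]$ computed with $\Lambda$ in place of $\fL_1$. You claim that $T_{V_2}$ is continuous because (\ref{eq:master_short}) is a finite linear system depending linearly on $\lambda$; this is false. The system has the form $Bv=-A\lambda$ with a \emph{fixed} matrix $B$, and solution sets of linear systems do not vary continuously with the right-hand side when $B$ is degenerate: the solvability condition cuts out a subspace of $\Lambda$ whose dimension can jump in the limit. Concretely, in $\R^4$ with Darboux basis, model the degenerate situation where $b_\tau\equiv 0$ and $\int_{s_1}^{s_2}X_\tau v\,d\tau$ spans $\R\,\partial_{p_1}$ (so $B=0$); then $T_{V_2}(\Lambda)=\bigl(\Lambda\cap(\R\,\partial_{p_1})^\angle\bigr)+\R\,\partial_{p_1}$, and for the Lagrangian subspaces $\Lambda_m=\mathrm{span}\{\partial_{p_1}+\tfrac1m\partial_{q^2},\ \partial_{p_2}+\tfrac1m\partial_{q^1}\}\to\Pi=\mathrm{span}\{\partial_{p_1},\partial_{p_2}\}$ one gets $T_{V_2}(\Lambda_m)=\mathrm{span}\{\partial_{p_1},\partial_{q^2}\}$ for every $m$, while $T_{V_2}(\Pi)=\Pi$. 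Note also that your continuity claim makes no use of the hypothesis $\ind^-Q<\infty$, which is a red flag: the statement you are trying to prove is false for general nets of Lagrangian subspaces. Ironically, the part you single out as hard is the routine one: for fixed $V_2$ the inner limit $\lim_{V_1}L|_{V_1\oplus V_2}$ \emph{exists}, because subspaces $V_1\oplus V_2$ are cofinal among finite-dimensional subspaces of the intermediate variation space $L^2_k[0,s_1]\oplus V_2$ and the existence theorem applies there (its index is bounded by the index on $[0,s_2]$); and once all inner limits exist, existence of the joint limit $\fL_2$ forces the iterated limit to equal the joint limit by a standard net argument in the compact Lagrangian Grassmanian. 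What genuinely remains, and what your proof does not establish, is the identification of that inner limit with $L_1^2[V_2]$, i.e.\ that the limit over $V_1$ commutes with the finite-dimensional reduction $T_{V_2}$. This is precisely where finiteness of the index and the monotonicity of the construction must enter quantitatively (to rule out the dimension jump exhibited above); invoking them by name, as in your closing sentences, does not close the argument.
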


This lemma implies that the curve of $L$-derivatives has some sort of a flow property, i.e. the $L$-derivative at the current instant of time can be recovered from the $L$-derivatives at previous moments. This observation is the key moment in our algorithm for computation of the $L$-derivative with arbitrary good precision.

The algorithm can be summarized in the following steps:
\begin{enumerate}
\item Take a partition $0 = s_0 < s_1 < ... < s_N = t$ of the interval $[0,t]$. The finer the partition is, the better will be approximation of the $L$-derivative at the time $t$;
\item Compute inductively $L_{(u,\lambda)}(\nu J_0^{t} ,G_{t})|_{V_i}$, $V_i = \R^{k}\chi_{[s_0,s_1]} \oplus ... \oplus \R^{k}\chi_{[s_i,s_{i+1}]}$ starting from $L_{(u,\lambda)}(\nu J_0^{t} ,G_{t})|_{V_0} = \Pi$.
\end{enumerate}
When $\max |s_{i+1}-s_i| \to 0$, we get in the limit the real $L$-derivative, since piecewise constant functions are dense in $L^2$. This way we reduce the problem to solving iteratively systems of $k$ linear equations. In the Theorem~\ref{thm:algorithm} of the Appendix an explicit solution to this system is given.

This algorithm does not only allow to approximate the $L$-derivative, but  also to compute the index of the Hessian restricted to the subspace of piece-wise constant variations.
\begin{theorem}
\label{thm:maslov_index}
Let $D = \{0 = s_0 < s_1 < ... < s_N  = t_1\}$ be a partition of the interval $[0,t_1]$ and let $V_D$ be the space of piece-wise constant functions with jumps at moments of time $s_i$. We denote by $V_i \subset V_D$ the subspace of functions that are zero for $t > s_i $, $V_i^0 = V_i \cap \ker d_{\tilde{u}} G_{t_1}$ and $\Lambda_i = L(\nu J_0^{t_1} ,G_{t_1})|_{V_i}$
Then the following formula is true
\begin{equation}
\label{eq:maslov_index}
\ind^- Q|_{V_D^0} = \sum_{i=0}^{N} \Ind_\Pi(\Lambda_i, \Lambda_{i+1}) + \dim\left( \bigcap_{i=0}^N \Lambda_i \right) - n,
\end{equation}
where $\Lambda_0 = \Lambda_{N+1} = \Pi$.
\end{theorem}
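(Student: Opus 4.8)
The plan is to read both sides as two accounts of a single Morse-theoretic quantity — the negative inertia of the second variation on the constrained, piecewise-constant variations — and to match them increment by increment as the support of the admissible variations grows from $\{0\}$ up to $[0,t_1]$. On the right-hand side, the summands assemble into the intersection number with the Maslov train of the closed loop $\Pi = \Lambda_0 \to \Lambda_1 \to \cdots \to \Lambda_N \to \Lambda_{N+1} = \Pi$ in $L(\Sigma)$; on the left-hand side we have $\ind^- Q$ on $V_D^0$. The whole argument is a dictionary between these two, glued together by the additivity Lemma~\ref{lem:important}.

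First I would make the incremental picture precise. Passing from $V_{i-1}$ to $V_i$ adjoins the variations supported in $[s_{i-1},s_i]$, and by Lemma~\ref{lemm:add} the prederivatives $\Lambda_{i-1}$ and $\Lambda_i$ are joined by the family obtained by letting the upper limit of integration run over $[s_{i-1},s_i]$. I would check that this connecting family is a \emph{simple monotone increasing} curve: monotonicity is the statement that the intrinsic form $\underline{\dot\gamma}$ of Section~\ref{sec:lagr_gr} is non-negative, which here comes from the sign-definite increment $\int \sigma(\eta_\tau, X_\tau\cdot) + b_\tau(\cdot,\cdot)\,d\tau$ in Lemma~\ref{lemm:add}, and simplicity follows by exhibiting a common transversal. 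Then the monotonicity results of Section~\ref{sec:monot} give $\gamma_i \circ \cM_\Pi = \Ind_\Pi(\Lambda_{i-1},\Lambda_i)$, so each right-hand summand is a genuine, sign-definite intersection number with the train.

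Next I would install the bridge to $\ind^- Q$. The Exercise preceding the previous theorem identifies non-degeneracy of $Q$ with transversality of the horizontal space to the prederivative, and the computation there identifies $\sgn Q|_{(\ker d_{\tilde u}G_{t_1})^\perp_Q}$ with a Maslov triple index of the form $\mu(Ver, L_t, Hor)$. Feeding this identification into Lemma~\ref{lem:important}, applied with $E = V_i$ and $V = \ker d_{\tilde u}G_{t_1}\cap V_i$, converts the change of $\ind^- Q$ under enlarging the variation space into $\Ind_\Pi(\Lambda_{i-1},\Lambda_i)$, up to the boundary and nullity corrections $\dim(V\cap V^\perp_Q)$ and $\dim(V\cap\ker Q)$ that the lemma carries. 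Summing over $i=1,\dots,N$ telescopes the index increments into $\ind^- Q|_{V_D}$ and the Maslov increments into $\sum_{i=0}^{N-1}\Ind_\Pi(\Lambda_i,\Lambda_{i+1})$, the finiteness of every term being guaranteed by the hypothesis that $Q$ is positive definite on a finite-codimension subspace.

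Finally I would close the loop. Restricting from $V_D$ to $V_D^0 = V_D\cap\ker d_{\tilde u}G_{t_1}$ imposes the $n$-dimensional endpoint constraint; geometrically this is the extra segment $\Lambda_N\to\Lambda_{N+1}=\Pi$, together with the normalization $\Ind_\Pi(\Lambda_N,\Pi) = n - \Ind_\Pi(\Pi,\Lambda_N)$ already used in the triangle-inequality proposition. Applying Lemma~\ref{lem:important} once more, with $V$ the constraint subspace, produces the $-n$ shift and collapses the accumulated kernel corrections into the single term $\dim\bigl(\bigcap_{i=0}^N\Lambda_i\bigr)$, which is exactly the common vertical directions, i.e. the nullity of the constrained Hessian. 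I expect the main obstacle to be precisely this bookkeeping of degeneracies: the triples $\Ind_\Pi(\Pi,\Lambda_1)$ and $\Ind_\Pi(\Lambda_N,\Pi)$ are evaluated \emph{at} the center of the train, so the clean triple formula degenerates and one must track the excess-intersection terms, and reconciling the step-by-step corrections $\dim(V\cap V^\perp_Q) - \dim(V\cap\ker Q)$ with the single global quantity $\dim(\bigcap_i\Lambda_i) - n$ is the delicate part of the proof.
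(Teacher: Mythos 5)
First, a point of reference: the paper itself states this theorem \emph{without proof} (it is imported from the index theory of the cited works), so there is no internal argument to compare yours against; I can only judge your proposal on its own merits. On those merits it has the right architecture --- telescoping increments of $\ind^- Q$ against Maslov-type increments of the prederivatives $\Lambda_i$ --- but there is a genuine gap exactly where the content of the theorem sits. The per-step equality you need, namely that enlarging the variation space from $V_{i-1}$ to $V_i$ changes the constrained index by $\Ind_\Pi(\Lambda_{i-1},\Lambda_i)$ plus controllable corrections, is not what Lemma~\ref{lem:important} delivers. As the Appendix emphasizes, in general one only gets the inequality $\ind^- Q|_{U^0}-\ind^- Q|_{U_1^0}\geq \Ind_\Pi\left(L(\varphi,\Phi)|_{U_1},L(\varphi,\Phi)|_{U}\right)$; equality requires hypotheses of the type in Theorem~\ref{thm:formula}, the first of which is $Q$-orthogonality of the splitting. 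For the support splitting $V_i=V_{i-1}\oplus\R^k\chi_{[s_{i-1},s_i]}$ this hypothesis fails: if $\supp v\subset[0,s_{i-1}]$ and $\supp w\subset[s_{i-1},s_i]$, then $Q(v,w)=\int_{s_{i-1}}^{s_i}\sigma\bigl(\eta^v_{s_{i-1}},X_\tau w_\tau\bigr)\,d\tau$ with $\eta^v_{s_{i-1}}=\int_0^{s_{i-1}}X_\theta v_\theta\,d\theta$, which is nonzero in general --- the Hessian of a control problem couples variations with disjoint supports through the flow term. So each increment needs an argument tailored to the piecewise-constant structure (this is where Lemma~\ref{lemm:add} would have to be used quantitatively, not merely to produce a connecting curve), and you do not supply it.

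Second, the auxiliary claims you lean on are also unjustified as stated, and the part you concede is precisely the theorem. Monotonicity of your connecting families does not follow from ``sign-definiteness'' of $\int\sigma(\eta_\tau,X_\tau\,\cdot)+b_\tau(\cdot,\cdot)\,d\tau$: nothing in the setting makes $b_\tau$ definite (no Legendre-type condition is assumed; monotonicity of the curve of $L$-derivatives is itself a nontrivial assertion of Theorem~\ref{them:main}, conditional on finiteness of the index). The identity $\Ind_\Pi(\Lambda_N,\Pi)=n-\Ind_\Pi(\Pi,\Lambda_N)$ you invoke is stated in the paper only for mutually transversal configurations, whereas every triple here containing $\Pi$ twice is maximally degenerate --- you acknowledge this, but acknowledging is not resolving. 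Your closing paragraph then defers ``reconciling the step-by-step corrections $\dim(V\cap V^\perp_Q)-\dim(V\cap\ker Q)$ with the single global quantity $\dim\bigl(\bigcap_{i}\Lambda_i\bigr)-n$'' as the delicate part; that reconciliation --- showing the degeneracies neither accumulate nor cancel wrongly, and that their total is exactly the common intersection minus $n$ --- \emph{is} the theorem. What you have is a credible plan of attack in the spirit of the theory, but every intermediate equality in it remains to be proved, in some cases under hypotheses that demonstrably fail for the decomposition you chose.
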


Moreover one can prove the following result, that is the basis of the whole theory
\begin{theorem}
\label{them:main}
Suppose that $(\tilde{q},\tilde{u})$ is an extremal of the problem (\ref{eq:control}), s.t. the index of the corresponding Hessian is finite and $\fL_t=\fL_{(\tilde u,\lambda_0)}(\nu J_0^t, G_t),\ \fL_t\in L(T_{\lambda_0} (T^* M)),\ t\in[0,t_1]$ is the associate to it family of $L$-derivatives; then $t\mapsto \fL_t$ is a monotone curve and
$$
\ind^- \Hess_{\tilde u} J_0^{t_1}|_{E^{-1} (q_0)} \ge \sup_D \sum_{s_i \in D}\Ind_\Pi(\fL_{s_i},\fL_{s_{i+1}}) + \Ind_\Pi(\fL_{t_1},\fL_{0}) + \dim\left( \bigcap_{t=0}^{t_1} \fL_{t} \right) - n,
$$
where the supremum is taken over all possible finite partitions $D$ of the interval $[0,t_1]$.
\end{theorem}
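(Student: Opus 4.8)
The plan is to combine three ingredients: an intrinsic formula for the velocity of the curve of $L$-derivatives (which yields monotonicity), the finite-dimensional identity of Theorem~\ref{thm:maslov_index}, and a limiting argument that replaces the piecewise-constant prederivatives by the genuine $L$-derivatives $\fL_{s_i}$.

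First I would prove monotonicity. Using the explicit description of the $L$-prederivative for $G_t$ from Section~\ref{sec:l-deriv}, a vector of $\fL_t$ is generated by $\eta_t=\eta_0+\int_0^t X_\tau v_\tau\,d\tau$, and the defining constraint $\int_0^t[\sigma(\eta_\tau,X_\tau w_\tau)+b_\tau(v_\tau,w_\tau)]\,d\tau=0$ for all admissible $w$ forces, after localizing in $\tau$, the pointwise identity $\sigma(\eta_t,X_t w)+b_t(v_t,w)=0$. The intrinsic velocity form from Section~\ref{sec:lagr_gr} is then $\underline{\dot\fL}_t(\eta)=\sigma(\eta_t,\dot\eta_t)=\sigma(\eta_t,X_t v_t)=-b_t(v_t,v_t)$, so $\underline{\dot\fL}_t\ge 0$ precisely when $b_t\le 0$, i.e. under the Legendre condition. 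This condition is itself forced by the standing hypothesis that the Hessian has finite index, since a control direction along which $b_\tau$ is positive on a set of positive measure would produce an infinite-dimensional negative subspace. Wherever $t\mapsto\fL_t$ is a smooth simple arc this shows it is increasing in the sense of Section~\ref{sec:monot}; the finiteness of $\sup_D\sum\Ind_\Pi(\fL_{s_i},\fL_{s_{i+1}})$ needed for the global (possibly discontinuous) notion of monotonicity follows a posteriori from the inequality proved below.

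Next I would fix a partition $D=\{0=s_0<\dots<s_N=t_1\}$ and apply Theorem~\ref{thm:maslov_index} verbatim, giving $\ind^- Q|_{V_D^0}=\sum_{i=0}^N\Ind_\Pi(\Lambda_i,\Lambda_{i+1})+\dim(\bigcap_{i=0}^N\Lambda_i)-n$ with $\Lambda_0=\Lambda_{N+1}=\Pi$ and $\Lambda_i=L(\nu J_0^{t_1},G_{t_1})|_{V_i}$. Since $V_D^0=V_D\cap\ker d_{\tilde u}G_{t_1}$ is a subspace of $\ker d_{\tilde u}G_{t_1}$ and restricting a quadratic form to a subspace cannot increase its negative index, we obtain $\ind^- Q|_{V_D^0}\le\ind^-\Hess_{\tilde u}J_0^{t_1}|_{E^{-1}(q_0)}$ for every $D$. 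The key observation is that the $\Lambda_i$ are themselves $L$-prederivatives at the intermediate time $s_i$: because every $v\in V_i$ is supported in $[0,s_i]$, both $\int_0^{t_1}X_\tau v_\tau\,d\tau$ and the constraint integral reduce to integrals over $[0,s_i]$, so $\Lambda_i=L(\nu J_0^{s_i},G_{s_i})|_{V_i}$. As $D$ is refined inside $[0,s_i]$ the piecewise-constant variations become dense in $L^2_k[0,s_i]$, whence $\Lambda_i\to\fL_{s_i}$ by the density theorem; in particular $\Lambda_0=\Pi=\fL_0$, $\Lambda_N\to\fL_{t_1}$, and the boundary term $\Ind_\Pi(\Lambda_N,\Lambda_{N+1})\to\Ind_\Pi(\fL_{t_1},\fL_0)$.

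Finally I would refine the partition and pass to the limit. Lower semicontinuity of the negative index of the quadratic form representing $\Ind_\Pi$ gives $\Ind_\Pi(\fL_{s_i},\fL_{s_{i+1}})\le\liminf_D\Ind_\Pi(\Lambda_i,\Lambda_{i+1})$, while monotonicity of $\fL_t$ guarantees that the index sums only increase under refinement and stabilize at their supremum; the intersection term is controlled by $\bigcap_{t\in[0,t_1]}\fL_t\subseteq\bigcap_{s_i\in D}\fL_{s_i}$ together with the convergence $\Lambda_i\to\fL_{s_i}$. Combined with the per-partition bound above, these estimates should push the right-hand side of the theorem below $\sup_D[\sum_i\Ind_\Pi(\Lambda_i,\Lambda_{i+1})+\dim(\bigcap_i\Lambda_i)-n]\le\ind^-\Hess$. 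I expect the main obstacle to be exactly this step: the three contributions—the $\Ind_\Pi$ sums, the boundary term, and the intersection dimension—are only semicontinuous, and one must check that they move in mutually compatible directions so the inequality survives the double limit (refining the partition while letting the prederivatives converge to the true $\fL_{s_i}$), in particular reconciling the continuum intersection $\bigcap_{t\in[0,t_1]}\fL_t$ with the finite intersections. The unavoidable loss in these semicontinuity estimates is precisely why only the inequality $\ge$, rather than an equality, can be expected.
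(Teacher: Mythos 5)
Before comparing, a caveat: the paper itself gives no proof of this theorem --- it is stated as ``one can prove the following result'' and the text moves on --- so your proposal can only be judged against the surrounding machinery (Theorem~\ref{thm:maslov_index}, the density theorem, Lemma~\ref{lemm:add}) that any proof would have to use. Your skeleton is indeed the intended one: recognize that $\Lambda_i = L(\nu J_0^{t_1},G_{t_1})|_{V_i}$ is an $L$-prederivative at the intermediate time $s_i$ (this is exactly what the change of variables $G_t = P_t^{-1}\circ E_t$ is for), apply Theorem~\ref{thm:maslov_index} for each partition, bound $\ind^- Q|_{V_D^0}$ by the index of the full Hessian, and pass to the limit using density of piecewise constant controls. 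The monotonicity mechanism (Legendre condition $b_t\le 0$ forced by finiteness of the index, $\underline{\dot\fL}_t(\eta)=-b_t(v_t,v_t)\ge 0$) is also the right one, with the caveat that the pointwise Jacobi-type description of $\fL_t$ that you differentiate is literally a description of prederivatives (or of $\fL_t$ under nondegeneracy of $b_t$); since $\fL_t$ is only defined as a generalized limit, this identification itself needs justification in the degenerate case.

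The genuine gap is the one you flag yourself, and it is not merely a technicality: your proposed control of the intersection term goes in the wrong semicontinuity direction. From $\Lambda_i^{(D)}\to\fL_{s_i}$ one gets $\limsup_D \dim\bigl(\bigcap_i \Lambda_i^{(D)}\bigr) \le \dim\bigl(\bigcap_i \fL_{s_i}\bigr)$ --- intersections can only jump \emph{up} in the limit --- whereas to carry the term $\dim\bigl(\bigcap_{t}\fL_t\bigr)$ through the per-partition bound you need a \emph{lower} bound on $\dim\bigl(\bigcap_i \Lambda_i^{(D)}\bigr)$ in terms of $\dim\bigl(\bigcap_t \fL_t\bigr)$. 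The inclusion $\bigcap_t\fL_t\subseteq\bigcap_{s_i}\fL_{s_i}$ is therefore of no help: it bounds the continuum intersection by a quantity that sits on the wrong side of the semicontinuity inequality. Dropping the intersection term (it is $\ge 0$) salvages only the weaker inequality without $\dim\bigl(\bigcap_t\fL_t\bigr)$. To prove the stated bound one needs an additional idea: for every sufficiently fine partition, exhibit inside $\bigcap_i\Lambda_i^{(D)}$ (equivalently, via Theorem~\ref{thm:maslov_index}, among null/negative directions of $Q|_{V_D^0}$ not already counted by the $\Ind_\Pi$ sum) a subspace of dimension at least $\dim\bigl(\bigcap_t\fL_t\bigr)$, e.g.\ by showing that every $\xi\in\bigcap_t\fL_t$ is approximated by vectors lying in \emph{all} prederivatives simultaneously. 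Your other two ingredients --- lower semicontinuity of $\Ind_\Pi$ and growth of the index sums under refinement --- do work in the favorable direction exactly as you say, so the $\Ind_\Pi$ part of the inequality (and, from it, the finiteness needed for monotonicity, which is not circular) is sound at the informal level of these notes; it is the intersection term that remains unproved.
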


\subsection{$L$-derivative for problems with a constrained control}
\label{sec:l-deriv_constr}
In our construction of the $L$-derivative we have heavily used the fact that all variations are two-sided, but often in optimal control theory this is not the case. The control parameters may take values in some closed set $U$. Then on the boundary $\p U$ we can only variate along smooth directions of $\p U$. To cover also these kind of situations we are going to use the change of variables introduced in the previous section.

We consider the optimal control problem (\ref{eq:control}), but now we assume that $u \in U \subset \R^k$, where $U$ is a union of a locally finite number of smooth submanifolds $U_i$ without boundaries. In particular, any semi-analytic set is availble.  A typical situation is when the constraints are given by a number of smooth inequalities
$$
p_i(u)\leq 0,
$$
that satisfy
$$
p_i(u) = 0 \qquad \Longrightarrow \qquad d_up_i \neq 0.
$$
For example, $U$ can be a ball or a polytope. In the latter case $U_i$ consists of the interior of polytope and faces of different dimensions.

Recall that in the last subsection we used a time scaling to reduce a free time problem to a fixed one. It is actually very useful to use general time reparameterizations as possible variations, even in the fixed time case. Assume that $t(\tau)$ is an increasing absolutely continuous function, s.t. $t(0) = 0$ and if $t_1$ is fixed also $\tau(t_1) = \tau_1$. Actually instead of the last condition, one can simply take the time variable as a new variable satisfying
$$
\dot{t} = 1.
$$
Then we can consider an optimal control problem
\begin{equation}
\label{eq:control2}
\dot{q} = f(q,u(\tau^{-1}(t))), \qquad \int_0^{t_1} l(q(s),u(\tau^{-1}(s)))ds \to \min.
\end{equation}
which is essentially the optimal control problem (\ref{eq:control}) written in a slightly different way. Since $t(\tau)$ is absolutely continuous, it is of the form
$$
t(\tau) = \int_0^\tau \alpha(s)ds.
$$
We rewrite (\ref{eq:control2}) in the new time $\tau$ to get
$$
\frac{dq}{d\tau} = \alpha(\tau) f(q,u(\tau)), \qquad \int_0^{t_1} \alpha(\tau)l(q(\tau),u(\tau))d\tau \to \min.
$$

Variations with respect to $\alpha(t)$ are called time variations. Since $\alpha>0$, time variations are always two-sided and thus one can include them to study the index of the Hessian via $L$-derivatives. They have been already used to derive necessary and sufficient optimality conditions in the bang-bang case, where no two-sided variations are available if we just vary $u$ (see~\cite{agr_bang,asz_bang}).

Time variations do not give any new contribution to the Morse index of the Hessian if the extremal control $\tilde{u}(t)$ is $C^2$. Indeed, assume for example that $\lambda_t$ is an abnormal extremal corresponding to $\tilde{u}(t)$. Let us denote for simplicity $\beta=1/\alpha$, i.e.
$$
\tau^{-1}(t) = \int_0^t \beta(s) ds
$$
so that we don't have to include differentials of inverse functions in the expressions.

We consider the end-point map $E_t(\tilde u(\beta(s)))$ of (\ref{eq:control2}) and calculate the Hessian with respect to $\beta$ at a point $\beta(s) \equiv 1$. We obtain
$$
\lambda_t d^2_{\beta} E_t (\gamma_1,\gamma_2) = \lambda_t d^2_{\tilde{u}} E_t \left( \frac{d \tilde{u}}{d t}\int_0^t \gamma_1(s) ds, \frac{d \tilde{u}}{d t}\int_0^t \gamma_2(s) ds \right) + \lambda_t d_{\tilde{u}} E_t \left( \frac{d^2 \tilde{u}}{d t}\int_0^t \gamma_1(s) ds \int_0^t \gamma_2(s) ds  \right).
$$
but the the second term is zero since $\tilde{u}$ is extremal and therefore $\lambda_t d_{\tilde{u}}E_t = 0$. This way we see that all the time variations in the Hessian could have been realized by variations of $u$.

If $\tilde{u}(t)$ has less regularity, then the time variations become non-trivial. For example, in the bang-bang case $\tilde{u}$ is piece-wise constant and the effect of the time variations concentrates at the points of discontinuity of $\tilde{u}$. This allows to reduce an infinite dimensional optimization problem to a finite one. This finite dimensional space of variations corresponds simply to variations of the switching times.

If we include the time variations we will have enough two-sided variations to cover all the known cases. It only remains to construct the $L$-derivative over the space of all available two-sided variations. Note that after adding the time variations, this space is not empty. It can be a very difficult computation, but using our algorithm, we can always construct an approximation and obtain a bound on the Morse index.

To apply our algorithm we must define "constant" variations. The set $U$ is a union of smooth submanifolds $U_i\in \R^{k+1}$ without boundaries. Since each $U_i$ is embedded in $\R^{k+1}$ by assumption, we can take the orthogonal projections $\pi^i_u: \R^{k+1} \to T_u U_i$, whenever $u \in U_i$. Then we can define a projection of a general variation $v_t \in L^2_{k+1}[0,t]$ to the subspace of two-sided variations as
$$
\pi_\tau v_\tau = \sum_{i=0}^n \chi_{U_i}(\tau) \pi^i_{\tilde{u}(\tau)} v_\tau,
$$
where $\chi_{U_i}(\tau)$ are the indicator functions. "Constant" variations for the constrained problem are just projections $\pi_\tau v$ of the constant sections $v\in \R^{k+1}$. Equivalently one can consider directly constant variations in $\R^{k+1}$ and simply replace $X_\tau$ in the definition of the $L$-derivative by $X_\tau \pi_\tau$. Then the algorithm from the previous subsection is applicable without any further modifications.

An important remark is that the orthogonal projections depend on the metric that we choose on $\R^{k+1}$. This choice indeed would give us different $L$-prederivatives, since the "constant" variations would be different, but in the limit the $L$-derivative will be the same, because at the end we just approximate the same space in two different ways.

\section*{Appendix: Increment of the index}

Recall that in the Theorem~\ref{thm:maslov_index}, we have stated that by adding piece-wise constant variations, we can track how the Maslov index of the corresponding Jacobi curve changes. But there is no use in this theorem if we are not able to construct explicitly the corresponding $L$-prederivatives from our algorithm. To do this we can use the following theorem.

\begin{theorem}
\label{thm:algorithm}
Suppose that we know $\fL(\nu J_0^t, G_t)|_V$, where $V$ is some space of variations defined on $[0,t]$. We identify $\fL(\nu J_0^t, G_t)|_V$ with $\R^n$ and the space of control parameters with $\R^k$, and put an arbitrary Euclidean metric on both of them. Let $E$ be the space of all $v\in \R^k$ for which
$$
\sigma\left( \eta, \frac{1}{\varepsilon}\int_t^{t+\varepsilon} X_\tau d\tau \cdot v\right) = 0, \qquad \forall \eta \in L(\nu J_0^t, G_t)|_V
$$
and let $L = \fL(\nu J_0^t, G_t)|_V \cap \fL(\nu J_0^t, G_t)|_{\tilde{V}}$, where $ \tilde{V} = V \oplus \R^k \chi_{[t,t+\varepsilon]}$.
We define the two bilinear maps $A: \fL(\nu J_0^t, G_t)|_V  \times E^{\perp} \to \R $, $ Q:  E^{\perp} \times E^{\perp}  \to \R$:
\begin{align*}
A: (\eta, w) &\mapsto \sigma  \left( \eta, \frac{1}{\varepsilon}\int_t^{t+\varepsilon} X_\tau d\tau \cdot w \right), \\
 Q: (v,w) &\mapsto \frac{1}{\varepsilon}\int_{t}^{t+\varepsilon}\sigma \left( \int_t^\tau X_\theta d\theta \cdot v, X_\tau w\right) + b_\tau(v,w) d\tau,
\end{align*}
and we use the same symbols for the corresponding matrices.

Then the new $L$-prederivative $\fL_{(u,\lambda)}(\nu J_0^t, G_t)|_{\tilde{V}}$ is a span of vectors from the subspace  $L$ and vectors
$$
\eta_i + \frac{1}{\varepsilon} \int_t^{t+\varepsilon} X_\tau d\tau \cdot v_i,
$$
where $v_i$ is an arbitrary basis of $E^\perp$ and $\eta_i$ are defined as
$$
\eta_i = - A^+ Q e_i
$$
with $A^+$ being Penrose-Moore pseudoinverse.
\end{theorem}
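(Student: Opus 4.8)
The plan is to specialize the incremental description of the $L$-prederivative from Lemma~\ref{lemm:add} to the short interval $[t,t+\varepsilon]$ with \emph{constant} control variations, reduce the resulting master equation to a finite-dimensional linear system governed by the matrices $A$ and $Q$, and then read off generators of its solution space. Throughout I would work in the fixed symplectic space $T_{\lambda_0}(T^*M)$, as in Lemma~\ref{lemm:add}, so that the pushforward $(P_0^t)_*$ plays no role in the linear algebra.

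First I would set $s_1=t$, $s_2=t+\varepsilon$ and $V_2=\R^k\chi_{[t,t+\varepsilon]}$ in Lemma~\ref{lemm:add}, so that a typical generator of $\fL(\nu J_0^t,G_t)|_{\tilde V}$ has the form $\lambda+\int_t^{t+\varepsilon}X_\tau v\,d\tau=\lambda+\varepsilon\,\bar X v$, where $\lambda$ lies in $\fL_1:=\fL(\nu J_0^t,G_t)|_V$, where $v\in\R^k$ is constant, and where $\bar X=\tfrac1\varepsilon\int_t^{t+\varepsilon}X_\tau\,d\tau$; the constraint (\ref{eq:master_short}) is tested against all constant $w\in\R^k$. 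Evaluating the three integrals in (\ref{eq:master_short}) for constant $v,w$ is routine: the term linear in $\lambda$ produces $\varepsilon\,\sigma(\lambda,\bar X w)$, while the iterated-integral term together with the $b_\tau$ term produces exactly $\varepsilon\,Q(v,w)$. After dividing by $\varepsilon$ the master equation becomes the finite system $\sigma(\lambda,\bar X w)+Q(v,w)=0$ for all $w\in\R^k$, and since the global factor $\tfrac1\varepsilon$ only rescales generators it is irrelevant for the span, so I may work with the vectors $\lambda+\bar X v$.

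Next I would exploit the splitting $\R^k=E\oplus E^\perp$. Because $\fL_1$ is Lagrangian, $\sigma(\cdot,\bar X v)$ annihilates $\fL_1$ precisely when $\bar X v\in\fL_1$, so $\bar X(E)\subset\fL_1$; consequently a variation $v\in E$ only moves $\lambda$ inside $\fL_1$, and the resulting vector lands in $\fL_1\cap\fL|_{\tilde V}=L$. This identifies the contribution of the ``ineffective'' directions $E$ with the intersection $L$, and isolates $E^\perp$ as the source of genuinely new directions. The key structural point is that $A$, viewed as a form on $\fL_1\times E^\perp$, has trivial right kernel: if $\sigma(\eta,\bar X w)=0$ for all $\eta\in\fL_1$ then $w\in E$, whence $w\in E\cap E^\perp=\{0\}$. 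Thus $A$ has full column rank and its transpose is surjective, so for every basis vector $v_i$ of $E^\perp$ the reduced equation $\sigma(\eta,\bar X w)=-Q(v_i,w)$ for $w\in E^\perp$ is \emph{consistent}; its minimum-norm solution is exactly $\eta_i=-A^+Qe_i$ with $A^+$ the Moore--Penrose pseudoinverse. This produces the claimed generators $\eta_i+\bar X v_i$.

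It then remains to assemble the span and verify it fills out all of $\fL|_{\tilde V}$. I would argue that $L$ together with the vectors $\eta_i+\bar X v_i$ solve the full master equation and are independent modulo $L$, and then match dimensions: since finite-dimensional variations give Lagrangian prederivatives, $\fL|_{\tilde V}$ has dimension $n$, and I would confirm $\dim L+\dim E^\perp=n$. The hard part will be precisely this bookkeeping, because the reduced equation only controls the test directions $w\in E^\perp$, so one must check that the constructed vectors also satisfy the master equation tested against $w\in E$ (equivalently, that the components carried by $\bar X(E)\subset\fL_1$ are absorbed into $L$) and that no further generators arise. I expect to settle this by combining the surjectivity of $A^T$ with the maximal isotropy of the Lagrangian space $\fL|_{\tilde V}$, which by the dimension count forces $L+\mathrm{span}\{\eta_i+\bar X v_i\}$ to exhaust it.
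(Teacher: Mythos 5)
Your first two steps are sound. Writing $\bar X=\frac1\varepsilon\int_t^{t+\varepsilon}X_\tau\,d\tau$, the incremental description
$$
\fL(\nu J_0^t,G_t)|_{\tilde V}=\Bigl\{\eta+\varepsilon\bar X v:\ \eta\in\fL(\nu J_0^t,G_t)|_V,\ v\in\R^k,\ \sigma(\eta,\bar X w)+Q(v,w)=0\ \ \forall w\in\R^k\Bigr\}
$$
(with $Q$ extended to all of $\R^k\times\R^k$) is correct; it actually follows directly from the definition of the prederivative by splitting variations and test functions according to their support, so your appeal to Lemma~\ref{lemm:add} (stated for the genuine $L$-derivatives) is harmless. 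The observations that $\bar X(E)\subset\fL|_V$ and that the system restricted to test directions $w\in E^{\perp}$ is consistent because $A$ has trivial right kernel are also correct. (For the record, the paper contains no proof of this theorem at all --- the argument in the appendix proves Theorem~\ref{thm:formula} --- so your proof has to stand on its own.) The problem is that the step you postpone as ``bookkeeping'' is the entire content of the theorem, and it cannot be carried out as you describe. Membership of $\eta_i+\bar X v_i$ in $\fL|_{\tilde V}$ requires the master equation for \emph{all} $w\in\R^k$; for $w\in E$ the symplectic term vanishes identically, so the equation reduces to $Q(v_i,w)=0$ for all $w\in E$, a nontrivial linear constraint on $v_i$ that neither your construction nor the theorem's formula ever addresses. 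For the same reason the dimension count $\dim L+\dim E^{\perp}=n$, which you plan to ``confirm'', is not true in general.

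Here is a concrete failure, arising already at the first step of the paper's own algorithm, where $\fL|_V=\Pi$. Take $n=1$, $k=2$, Darboux coordinates $(p,q)$ on $T_{\lambda_0}(T^*M)\cong\R^2$, and on $[t,t+\varepsilon]$ let $X_\tau$ be the constant map $Xv=(v_2,v_1)$ and $b_\tau(v,w)=v_1w_2+v_2w_1$, where $e_1,e_2$ is the standard basis of $\R^2$ (this data is realizable, since $X_\tau$ and $b_\tau$ come from independent blocks of the second derivative of $h$). Then $E=\R e_2$, $E^{\perp}=\R e_1$, $Q(v,w)=\tfrac\varepsilon2\sigma(Xv,Xw)+b(v,w)$, and the master equation for $\eta=(p,0)\in\Pi$ reads
$$
\Bigl(p+\bigl(1+\tfrac\varepsilon2\bigr)v_2\Bigr)w_1+\bigl(1-\tfrac\varepsilon2\bigr)v_1w_2=0\qquad\forall(w_1,w_2),
$$
whose $E$-component (the $w_2$-coefficient) forces $v_1=0$. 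Hence every solution has $v\in E$, and $\fL|_{\tilde V}=\Pi$ exactly; thus $L=\Pi$ and $\dim L+\dim E^{\perp}=2\neq1=n$, while $Q|_{E^{\perp}\times E^{\perp}}=0$ gives $\eta_1=0$ and the ``generator'' $g_1=\bar X e_1=\partial_q$, which does not lie in $\fL|_{\tilde V}$; the proposed span $L+\R g_1$ is two-dimensional, hence not even isotropic. So surjectivity of $A^T$ combined with maximal isotropy cannot close the gap: genuine solutions acquire $E$-components of $v$ dictated by the equations $Q(v,w)=0$, $w\in E$, and these components move the $\fL|_V$-part by vectors that need not lie in $L$. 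A correct write-up must either add a hypothesis (for instance $Q(E^{\perp},E)=0$) or modify the generators accordingly; in other words the statement itself needs repair, not only your argument. A smaller error of the same nature: replacing the true elements $\eta+\varepsilon\bar X v$ by $\eta+\bar X v$ is not a rescaling --- rescaling the solution pair $(\eta,v)$ yields $\frac1\varepsilon\eta+\bar X v$, a different line whenever $\eta$ and $\bar X v$ are independent. Here you are reproducing an $\varepsilon$-inconsistency already present in the theorem's formula, but in a proof it must be flagged and resolved, not asserted away.
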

Although we use some additional structures in the formulation like a Euclidean metric, it will only give a different basis for $\fL(\nu J_0^t, G_t)|_{\tilde{V}}$, but the $L$-derivative itself will be the same.

In general if we would like to compute the difference between indices of the Hessian $\Hess_{\tilde{u}} \varphi|_{\Phi^{-1}(q_0)}$ restricted to two finite-dimensional subspaces $U_1 \subset U$, the Maslov index will only give us a lower bound:
$$
\ind^- Q|_{U^0} - \ind^- Q|_{U_1^0} \geq \Ind_\Pi(L(\varphi,\Phi)|_{U_1},L(\varphi,\Phi)|_{U}),
$$
where as before $U_1^0 = U_1 \cap \ker d_{\tilde{u}} \Phi$ and the same for $U^0$. This formula was proved in~\cite{agr_feedback}.

One can ask, when this inequality becomes an equality. It seems that there is no general if and only if condition, but one can find some nice situations when the equality holds, like in the piece-wise constant case. Another condition that is quite general is stated in the following Theorem.

\begin{theorem}
\label{thm:formula}
Assume that index of the Hessian at a point $(u,\lambda)$ is finite and that we can find a splitting $U_1 \oplus U_2$ of a possibly infinite-dimensional $U$, s.t.
\begin{enumerate}
\item $U_1$ and $U_2$ are orthogonal with respect to $Q$;
\item $Q|_{U_2^0} > 0$, where $U_2^0 = U_2 \cap \ker \Phi'_u$;
\item $\dim \fL_{(u,\lambda)}(\varphi,\Phi)|_{U_1}\cap \Pi = 0$.
\end{enumerate}
Then
$$
\ind^- Q|_{U^0} - \ind^- Q|_{U_1^0} = \Ind_\Pi \left( \fL(\varphi,\Phi)|_{U_1},\fL(\varphi,\Phi)|_{U} \right).
$$

\end{theorem}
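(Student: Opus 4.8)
The plan is to establish the claimed equality by an \emph{exact} computation: I compute the left-hand side with the index-splitting identity of Lemma~\ref{lem:important} and match the outcome to the quadratic form that defines $\Ind_\Pi$. I work throughout with the Hessian $Q$ on the real Hilbert space $U^0:=\ker\Phi'_u$; the assumption that the index is finite means that $Q|_{U^0}$ is positive definite on a subspace of finite codimension, which is exactly the hypothesis of Lemma~\ref{lem:important}. I also set $U_1^0=U_1\cap\ker\Phi'_u$, $U_2^0=U_2\cap\ker\Phi'_u$, and write $\perp_Q$ for $Q$-orthogonal complements taken inside $U^0$.

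First I would translate the hypotheses. By the proposition characterizing the kernel of the Hessian (a nonzero vertical vector lies in $\fL\cap\Pi$ precisely when $Q$ has a constrained null direction), condition (3) is equivalent to $Q|_{U_1^0}$ being nondegenerate. Hence, applying Lemma~\ref{lem:important} with $E=U^0$ and $V=U_1^0$, both correction terms $\dim(V\cap V^{\perp_Q})$ and $\dim(V\cap\ker Q)$ vanish and I obtain
$$\ind^- Q|_{U^0}-\ind^- Q|_{U_1^0}=\ind^- Q|_{W},\qquad W:=(U_1^0)^{\perp_Q}\cap U^0 .$$
This reduces the theorem to the single identity $\ind^- Q|_W=\Ind_\Pi(\fL(\varphi,\Phi)|_{U_1},\fL(\varphi,\Phi)|_U)$.

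For the identification I build a linear map $\Psi\colon\Pi\to W$. Assuming first that the triple $(\fL|_{U_1},\fL|_U,\Pi)$ is mutually transversal, each $\lambda=(\delta p,0)\in\Pi$ has a unique decomposition $\lambda=\lambda_0+\lambda_1$ with $\lambda_0\in\fL|_{U_1}$ and $\lambda_1\in\fL|_U$; writing $\lambda_i=(\delta p_i,\delta q_i)$ and lifting $\lambda_0,\lambda_1$ through the defining relations (\ref{eq:L}) to $\delta u_0\in U_1$ and $\delta u_1\in U$, I set $\Psi(\lambda)=w:=\delta u_0+\delta u_1$. The condition $\delta q_0+\delta q_1=0$ forces $w\in U^0$, while $Q\delta u_1=\delta p_1\Phi'_u$ on $U$ and $Q\delta u_0=\delta p_0\Phi'_u$ on $U_1$ give $\langle Qw,u_1\rangle=0$ for all $u_1\in U_1^0$, so $w\in W$. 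A short computation, using $\delta q_1=-\delta q_0$, the symmetry of $Q$, and $w\in U^0$ to drop $\langle Q\delta u_1,w\rangle=\delta p_1\Phi'_u w=0$, yields
$$\tilde q(\lambda)=\sigma(\lambda_1,\lambda_0)=\langle Q\delta u_0,w\rangle=\langle Qw,w\rangle ,$$
so that $\tilde q=\Psi^*(Q|_W)$: the very form defining $\Ind_\Pi$ is the pullback of $Q|_W$ along $\Psi$. Since the triple is transversal, $\tilde q$ is nondegenerate, so $\Psi$ is injective.

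It remains to peel off the positive part of $Q|_W$, and this is where conditions (1) and (2) enter. Orthogonality gives $U_2^0\subset W$ and also $\im\Psi\perp_Q U_2^0$ (for $w=\Psi(\lambda)$ and $w_2\in U_2^0$ one checks $\langle Qw,w_2\rangle=\langle Q\delta u_1,w_2\rangle=\delta p_1\Phi'_u w_2=0$ using (1) and $w_2\in U^0$), while (2) makes $Q|_{U_2^0}$ positive definite, hence nondegenerate. A second application of Lemma~\ref{lem:important}, now on $E=W$ with $V=U_2^0$, therefore gives $\ind^- Q|_W=\ind^- Q|_{(U_2^0)^{\perp_Q}\cap W}$; since $\im\Psi\subseteq(U_2^0)^{\perp_Q}\cap W$, the equality $\im\Psi=(U_2^0)^{\perp_Q}\cap W$ would yield $\ind^- Q|_W=\ind^- Q|_{\im\Psi}=\ind^-\tilde q=\Ind_\Pi(\fL|_{U_1},\fL|_U)$. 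I expect the surjectivity $\im\Psi\supseteq(U_2^0)^{\perp_Q}\cap W$ to be the main obstacle: for a given $w$ in the complement it amounts to reconstructing the splitting $w=\delta u_0+\delta u_1$, i.e.\ to solving a linear system whose unique solvability is exactly what conditions (1)--(3) encode. Finally I would discharge the transversality assumption in the standard way---for a degenerate triple $\Ind_\Pi$ carries the correction dimensions $\dim(\cdot)$, which match the defect terms appearing when $\Psi$ drops rank---and pass from the $L$-prederivatives to the $L$-derivatives, the finiteness of the index allowing the limit to commute with $\ind^-$.
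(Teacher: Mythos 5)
Your overall skeleton matches the paper's (two applications of Lemma~\ref{lem:important}, then an identification of $Q$ restricted to a $Q$-orthogonal complement with the form $\tilde q$ defining $\Ind_\Pi$), and your computations $\tilde q=\Psi^*\bigl(Q|_W\bigr)$ and $\im\Psi\perp_Q U_2^0$ are correct. But there are two genuine gaps. First, your opening reduction rests on a false equivalence: condition (3) is \emph{not} equivalent to nondegeneracy of $Q|_{U_1^0}$. The nullity proposition you invoke concerns the full Morse problem, where $\Phi$ is a submersion; for the restriction to $U_1$ the kernel of $Q|_{U_1^0}$ contains, besides the directions detected by $\fL|_{U_1}\cap\Pi$, the whole subspace $\ker Q|_{U_1}\cap U_1^0$ (vectors $Q$-orthogonal to all of $U_1$, whose multiplier is $\xi=0$ and which therefore produce no vertical direction). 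A trivial counterexample: $\varphi\equiv 0$, $\Phi$ a submersion, $U_1=U$; then $Q=0$ is maximally degenerate on $U_1^0$, yet $L|_{U_1}$ is the horizontal subspace and (3) holds. Hence you cannot invoke the ``moreover'' clause of Lemma~\ref{lem:important}: both correction terms $\dim(V\cap V^{\perp_Q})$ and $\dim(V\cap\ker Q)$ may be nonzero, and your identity $\ind^- Q|_{U^0}-\ind^- Q|_{U_1^0}=\ind^- Q|_W$ survives only because they \emph{cancel}. Establishing that cancellation is real work and uses hypothesis (1), which your first step never touches: the paper proves $\dim\bigl(U_1^0\cap (U_1^0)^{\perp_Q}\bigr)=\dim(L|_{U_1}\cap\Pi)+\dim(\ker Q|_{U_1}\cap U_1^0)$ and $\dim\bigl(U_1^0\cap\ker Q|_{U^0}\bigr)=\dim(L|_{U}\cap\Pi)+\dim(\ker Q|_{U_1}\cap U_1^0)$, so that the difference is controlled by the two vertical intersections, which vanish under (3). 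None of this appears in your argument.

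Second, the surjectivity $\im\Psi\supseteq (U_2^0)^{\perp_Q}\cap W$, which you explicitly leave as ``the main obstacle,'' is precisely the heart of the theorem, not a routine verification. Without it your chain only yields $\ind^- Q|_W=\ind^- Q|_{(U_2^0)^{\perp_Q}\cap W}\ge \ind^- Q|_{\im\Psi}=\ind^-\tilde q$, i.e.\ the inequality $\ind^- Q|_{U^0}-\ind^- Q|_{U_1^0}\ge \Ind_\Pi\bigl(\fL(\varphi,\Phi)|_{U_1},\fL(\varphi,\Phi)|_{U}\bigr)$; but that inequality is already known in full generality (it is stated in the appendix, from \cite{agr_feedback}) and needs none of the hypotheses (1)--(3). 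The paper closes exactly this gap through its explicit Lagrange-multiplier description of $(U_1^0)^{\perp}$ and $(U_1^0+U_2^0)^{\perp}$: for $w=v_1+v_2$ in the complement it \emph{constructs} covectors $\xi_1,\xi_2$ (using an auxiliary Euclidean metric and a basis of a complement of $U_1^0$ in $U_1$) satisfying $\langle Qv_1+\xi_1\Phi'_u,\,U_1\rangle=0$ and $\langle Qv_2-\xi_2\Phi'_u,\,U_2\rangle=0$, which is exactly the decomposition your $\Psi$ would need in order to hit $w$. A smaller but real issue: you build $\Psi$ by ``lifting through (\ref{eq:L}),'' which is legitimate for $L$-prederivatives but not for the limit objects $\fL$ when $U$ is infinite-dimensional; the construction must be carried out for finite-dimensional restrictions (the paper first assumes $\dim U<\infty$) and the limit taken at the end, where hypothesis (3) is what keeps the intersection-dimension terms from jumping in the limit.
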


We are going to apply twice the Lemma~\ref{lem:important}: first time to the subspace $U_1^0$ in $U^0$ and the second time to $U_2^0$ in $(U_1^0)^\perp$. Assume for now, that $\dim U < \infty$. First we clarify what are all the subspaces presented in the formula. We have
$$
(U_1^0)^\perp = \left\{v = v_1 + v_2 \in U : f'_u(v_1+v_2 )= 0, \langle Qv_1,U_1^0 \rangle = 0 \right\}.
$$
Here we have used our orthogonality assumptions. We claim that $(U_1^0)^\perp$ is actually equal to the subspace
$$
\left\{v_1 + v_2 \in U: f'_u(v_1+v_2 )= 0,\exists \xi_1\in T_{f(u)}^*M, \langle Qv_1 + \xi_1 f'_u,U_1 \rangle = 0\right\}
$$
It is clear that the second space is a subspace of $(U_1^0)^\perp$. We want to prove the converse statement. Assume that $v \in (U_1^0)^\perp$. First we put any Euclidean metric on $T_{f(u)}M$ and use it to define an isomorphism between $T^*_{f(u)}M$ and $T_{f(u)}M$. Secondly we choose a subspace $E \in U_1$ complementary to $U_1^0$ and a basis $e_i$ of $E$, s.t. $f'_u(e_i)$ form an orthogonal subset. Then the covector $\xi_1$ that we need is simply given by
$$
\xi_1 = -\sum_{i=1}^{\dim E} \frac{f'_u (e_i)}{|f'_u(e_i)|^2}\langle Qv_1,e_i \rangle.
$$
Thus the claim has been proved.

From the orthogonality assumption it follows that $U_2^0 \in (U_1^0)^\perp$.
The orthogonal complement of $U_2^0$ in $(U_1^0)^\perp$ is equal to $(U_1^0 + U_2^0)^\perp$, which is equal to
\begin{align*}
(U_1^0 + U_2^0)^\perp = \left\{v_1 + v_2 \in U: f'_u(v_1+v_2 )= 0,\exists \xi_1 \in T_{f(u)}^*M, \langle Qv_1 + \xi_1 f'_u,U_1 \rangle = 0, \langle Qv_2, U_2^0\rangle = 0\right\}.
\end{align*}
Similar to above this is equivalent to
\begin{align*}
(U_1^0 + U_2^0)^\perp = \left\{v_1 + v_2 \in U: f'_u(v_1+v_2 )= 0,\exists \xi_1,\xi_2 \in T_{f(u)}^*M, \right. &\langle Qv_1 + \xi_1 f'_u,U_1 \rangle = 0, \\
& \left. \langle Qv_2 - \xi_2 f'_u, U_2  \rangle = 0\right\}.
\end{align*}

We can now compute the quadratic form $Q$ restricted to $(U_1^0 + U_2^0)^\perp$. Again we use the orthogonality assumption and the equivalent definition of $(U_1^0 + U_2^0)^\perp$ above. Assume that $v = v_1 + v_2 \in (U_1^0 + U_2^0)^\perp$ and $\xi = \xi_1 + \xi_2$. Then
$$
\langle Q (v_1 + v_2),v_1 + v_2 \rangle = \langle Qv_1, v_1\rangle + \langle Qv_2, v_2\rangle = - \xi_1 f'_u v_1 + \xi_2 f'_u v_2 = - \xi f'_u v_1.
$$

Now we would like to write down the expression for the matrix $S$ from the definition of the Maslov index. First we write down the definition of the two $L$-derivatives:
$$
L(\varphi,\Phi)|_{U_1} = \{(\eta_1,f'_u v_1^1): \langle Qv_1^1 + \eta_1 f, U_1\rangle = 0 \};
$$
$$
L(\varphi,\Phi)|_{U} = \{(\eta_2,f'_u (v_1^2 + v_2)): \langle Qv_1^2 + \eta_2 f, U_1\rangle = 0,\langle Qv_2 + \eta_2 f, U_2\rangle = 0 \}.
$$
The quadratic form from the Maslov index is defined on $(L(\varphi,\Phi)|_{U_1}+L(\varphi,\Phi)|_{U})\cap \Pi$. We write $v_1^1 + v_1^2 = v_1$, $\xi_1 = \eta_1 + \eta_2$, $\xi_2 = - \eta_2$ and suppose that $f'_u(v_1) + f'_u(v_2) = 0$. Then for the quadratic form $\tilde{q}$ we have
\begin{align*}
\tilde{q} = \sigma\left( (\eta_1,f'_u v_1^1),(\eta_2,f'_u (v_1^2 + v_2)) \right) = \sigma\left( (\eta_1,f'_u v_1^1),(0,f'_u v_2) \right) = \eta_1 f'_u v_2 = -\eta_1 f'_u v_1 = -\xi f'_u v_1
\end{align*}
In the second equality we have used that $(\eta_1,f'_u v_1^1)$ and $(\eta_2,f'_u v_1^2)$ belong to $L(\varphi,\Phi)|_{U_1}$ by definition.

We see that this gives the same expression as for $Q|_{(U_2^0)^\perp}$. But moreover both quadratic forms are actually defined on the same space. Indeed, we have

\begin{align*}
(L(\varphi,\Phi)|_{U_1}+L(\varphi,\Phi)|_{U})\cap \Pi = \{(&\xi_1 + \xi_2,0):\exists v_i\in V_i, f'_u(v_1 + v_2) = 0, (\xi_1,f'_u v_1) \in L(\varphi,\Phi)|_{U_1}, \\
(-&\xi_2,f'_u v_2) \in L(\varphi,\Phi)|_{U_2} \} = (L(\varphi,\Phi)|_{U_1}+L(\varphi,\Phi)|_{U_2})\cap \Pi
\end{align*}
But if we add to $(\xi_1 + \xi_2) \in (L(\varphi,\Phi)|_{U_1}+L(\varphi,\Phi)|_{U})\cap \Pi$ the corresponding $v_i$ and to $v_i \in U_i \cap (U_1^0 + U_2^0)^\perp$ the corresponding $\xi_1 + \xi_2 $, we obtain the same space.

Now we compute the other terms from the formula in Lemma~\ref{lem:important}. We have
$$
U_1^0 \cap (U_1^0)^\perp = \{v_1  \in U_1^0: Q(v_1,U_1^0)=0\}.
$$
Similarly to the discussion in the beginning of the proof, we can show that
$$
U_1^0 \cap (U_1^0)^\perp = \{v_1  \in U_1^0: \exists \xi \in T_{f(u)}^*M, Q(v_1 +\xi f'_u,U_1)=0\}.
$$
We do now the same for $\ker Q|_{U^0} \cap U_1^0$:
$$
\ker Q|_{U^0} \cap U_1^0 = \{v_1 \in U_1^0: \langle Q v_1, U^0 \rangle = 0\} = \{v_1 \in U_1^0: \langle Q v_1 + \xi f'_u, U \rangle = 0\}
$$

To understand the dimensions, we look carefully at the equation
$$
\langle Qv_1 +\xi f'_u,U_1\rangle=0
$$
If there are two solutions $(\xi,v_1)$ and $(\xi,v'_1)$ of this equation, then by linearity $(0,v_1 - v'_1)$ is a solution as well and thus all solutions are uniquely defined by different $\xi$ modulo $\ker Q|_{U_1} \cap U_1^0$. These $\xi$ lie in $L(\varphi,\Phi)|_{U_1} \cap \Pi$ as can be seen from the definitions. Therefore
$$
\dim \left( U_1^0 \cap (U_1^0)^\perp \right) = \dim \left( L(\varphi,\Phi)|_{U_1} \cap \Pi\right) + \dim \left( \ker Q|_{U_1} \cap U_1^0\right)
$$

Now we do the same for
$$
0 = Q(v_1 +\xi f'_u,V)= Q(v_1 +\xi f'_u,U_1) + \xi f'_u U_2
$$
Again $\xi$ are defined uniquely modulo $\ker Q|_{U_1} \cap U_1^0$, but now they lie in $L(\varphi,\Phi)|_{U} \cap \Pi$. Therefore
$$
\dim \left( \ker Q|_{U^0} \cap U_1^0 \right) = \dim \left( L(\varphi,\Phi)|_{U} \cap \Pi\right) + \dim \left( \ker Q|_{U_1} \cap U_1^0\right).
$$

Since $Q$ is positive on $U_2^0$, we have $(U_2^0)^\perp \cap U_2^0  = \{0\}$ and so we can collect all the formulas using the fact that $ (L(\varphi,\Phi)|_{U} \cap \Pi) \subset (L(\varphi,\Phi)|_{U_1} \cap \Pi) $:
\begin{align*}
\ind^- Q|_{U^0} - \ind^- Q|_{U_1^0} &= \Ind_{\Pi}\left( L(\varphi,\Phi)|_{U_1},L(\varphi,\Phi)|_{U} \right) +\frac{1}{2}\dim \left( L(\varphi,\Phi)|_{U_1} \cap \Pi \right) - \\
&-\frac{1}{2}\left( \dim L(\varphi,\Phi)|_{U}\cap \Pi \right)
\end{align*}

Under the assumption three the formula is valid also in the infinite dimensional case. We know that the $L$-prederivatives will converge and that the quadratic form from the Maslov-type index is continuous. The only possibly discontinuous term are the dimensions of various intersections, but they are zero now for $L$-prederivatives close to the $L$-derivatives.

\bibliographystyle{plain}
\bibliography{bib}

\end{document}